\newcommand{\QPl}{\text{QPI}}             
\newcommand{\QPe}{\text{QPE}}             
\newcommand{\x}{{\mathbf x}}              
\newcommand{\s}{{\mathbf s}}              
\newcommand{\Ao}{{A}_0}         
\newcommand{\bo}{{b}_0}         
\newcommand{\co}{c_0}                     
\newcommand{\Fo}{\widetilde{F}_0}         
\newcommand{\fo}{\widetilde{f}_0}         
\newcommand{\dobj}{d_0}                   
\newcommand{\Fi}{\widetilde{F}_i}         
\newcommand{\ei}{e_i}                     
\newcommand{\Ai}{{A}_{i}}       
\newcommand{\bi}{{b}_{i}}       
\newcommand{\ci}{c_i}                     
\newcommand{\Aoii}{\widetilde{A}^{ii}_0}              
\newcommand{\Aoij}{\widetilde{A}^{ij}_0}              
\newcommand{\Ali}{\widetilde{A}_{i}}                  
\newcommand{\boi}{\widetilde{b}_{0i}}                 
\newcommand{\bli}{\widetilde{b}_i}                    
\newcommand{\cli}{c_i}                    
\newcommand{\Abco}{\overline{A}_0}        
\newcommand{\Abci}{\overline{A}_i}        
\newcommand{\blam}{\bm{\lambda}}          
\newcommand{\smsum}{{\textstyle \sum}}    
\newcommand{\ts}{t_{si}}                  
\newcommand{\tv}{t_{v}}                   
\newcommand{\N}{\mathcal{N}}                 
\newcommand{\g}{\mathbf{g}}               
\newcommand{\sproperty}{$\mathcal{S}$-property}     
\DeclareMathOperator{\diag}{diag}
\DeclareMathOperator{\trace}{tr}
\DeclareMathOperator{\evec}{\overline{E}}
\DeclareMathOperator{\id}{\rm{I}}
\DeclareMathOperator{\col}{col}
\DeclareMathOperator{\sign}{\rm{sign}}
\DeclareMathOperator{\range}{range}
\DeclareMathOperator{\nullspace}{null}
\DeclareMathOperator*{\argmin}{arg\,min}
\journalname{}
\begin{document}

\title{Strong duality in nonconvex quadratic problems with separable quadratic constraints
\thanks{This work was supported in part by the Spanish Ministry of 
Science and Innovation under the grant TEC2016-76038-C3-1-R (HERAKLES), 
the COMONSENS Network of Excellence TEC2015-69648-REDC and by an FPU
doctoral grant to Javier Zazo.}}

\titlerunning{Strong duality in nonconvex QPs with separable constraints}        

\author{Javier Zazo \and Santiago Zazo}

\institute{
           Information Processing and Telecommunications Center,\\
		   Universidad Polit\'ecnica de Madrid,\\
           ETSI Telecomunicaci\'on, Av. Complutense 30, 28040 Madrid (Spain).\\
           \email{javier.zazo.ruiz@upm.es and santiago.zazo@upm.es}
}

\date{Article draft from 2017.}

\maketitle

\begin{abstract}
We study nonconvex quadratic problems (QPs) with quadratic separable constraints, where
these constraints can be defined both as inequalities or equalities.
We derive sufficient conditions for these types of problems to present the \sproperty, which ultimately guarantees strong duality between the primal and dual problems of the QP.
We study the existence of solutions and propose a novel distributed algorithm to solve the problem optimally when the \sproperty\ is satisfied.
Finally, we illustrate our theoretical results proving that the robust least squares problem with multiple constraints presents the strong duality property.
\keywords{Nonconvex quadratic optimization \and strong duality \and S-property \and semidefinite programming \and distributed algorithm}
\subclass{90C20 \and 90C26 \and 90C46 \and 90C22.}
\end{abstract}

\section{Introduction}
\label{sec:intro}
Nonconvex quadratically constrained quadratic problems (QCQPs) have numerous applications and have been extensively studied for more than 70 years~\cite{Feron2000}.
They are normally considered to be NP-hard problems, or at least, as hard as a large number of other problems which are also assumed to have a high degree of complexity~\cite{Garey1979}.
In this paper we consider a subclass of QCQPs where the constraints do not overlap with each other and we give sufficient conditions on the problem formulation that will guarantee strong duality between the primal problem and its dual.
From our results, we are able to identify specific cases in which these problems can be solved optimally, and provide some algorithmic results for these cases.
More formally, we specify the QCQP with separable constraints at the beginning of \cref{sec:analysis}.

The first result establishing strong duality for a nonconvex quadratic problem was established in~\cite{Yakubovich1971} with the proposal of the $\mathcal{S}$-Lemma.
Combined with previous results regarding the field of values of two matrices~\cite{Dines1941,Brickman1961,Hestenes1968}, strong duality was characterized for nonconvex quadratic problems with a single quadratic constraint.
An accesssible proof of such a result is available in~\cite[Appx. B]{Boyd2004} and an extension that considers a single equality constraint rather than an inequality is analyzed in~\cite{More1993}.
Strong duality for two quadratic constraints was analyzed in \cite{Polyak1998} for the real case and it was extended in \cite{Beck2006} to the complex case.
Extension results for the indefinite trust region subproblem are given in \cite{Fortin2004,Stern1994,Pong2014}.

However, results regarding an arbitrary number of constraints are very limited.
One relevant result is~\cite{Bengtsson2001}, which establishes strong duality in a beamforming application where the objective is convex and the constraints consist of differences of convex functions with a given structure.
In this case, the application is very specific, but the approach for the proof is novel.
Further results for similar beamforming settings have also been proposed in \cite{Huang2010}.

Another result that has an arbitrary number of constraints is established when the Hessian matrices corresponding to the objective and constraints are simultaneously diagonalizable via congruence~\cite{Jacobson1976}.
In this circumstance, the problem can be transformed into a diagonalized version and solved efficiently as a linear program.
However, such a requirement is extremely restrictive. 
Proposition 3.5 in \cite{Polik2007} presented another result on strong duality which requires that the Hessian matrices of the quadratic objective and constraints are a linear combination of only two of those matrices.
Other articles studying special structures can be found in \cite{Burer2013,Burer2015}.
Two survey papers 
can be found in~\cite{Hiriart-Urruty2002} and~\cite{Polik2007}.

Reference \cite{Zhang2000} studied QPs with convex separable constraints, and their results regarding strong duality requirements are similar to ours in certain cases.
However, we consider non-convex quadratic constraints, including both inequality and equality constraints, and our results are not easily obtained from their approach.
Article \cite{Jeyakumar2009} also studied strong duality of QCQPs with inequality constraints similar to \cite{Zhang2000} and ours, but do not consider equality constraints. 
To the best of our knowledge, separable equality constraints is novel in the literature.

Other results for nonconvex QCQPs are provided in~\cite{Jeyakumar2007}, which proposes necessary and/or sufficient conditions on some quadratic problems for optimality.
Specifically, they consider a weighted nonconvex least squares problem with convex quadratic constraints, and a partitioning problem (referred to as bivalent nonconvex problem), but do not establish strong duality requirements in these cases.
Similarly,~\cite{Beck2000} studies global optimality conditions for quadratic problems with binary constraints.
Both articles study the conditions to establish optimality for candidate points, but do not provide any methods nor guarantees to find such solutions.
Finally, \cite{Bomze2015} analyzes the use of copositive relaxations to obtain better lower bounds for general nonconvex QCQPs than semidefinite relaxations. 

Our contribution in this paper is twofold.
First, we propose sufficient conditions to establish strong duality for problems that have separable constraints, where the variables in the optimization problem appear in a single constraint.
Secondly, we focus on distributed algorithms that solve the QCQP, rather than a semidefinite program.
Our algorithms exploit the separability of the constraints and scales to high dimensions while converging to the global minimum.
To the best of our knowledge this is the first distributed algorithm with convergence guarantees to a global minimum in a non-convex setting.

At most, we can only have as many constraints in the problem as the dimension of the unknown variable.
Many applications fall into this category, namely, Boolean least squares, partitioning problems, the max-cut, robust least squares, localization problems, Euclidean matrix reconstruction, etc.
Since we consider both equality and inequality constraints, some of these instances can be analyzed within our framework to verify if they present the strong duality property.

The paper is structured as follows: at the end of this introduction we present some notation used throughout the paper. In \cref{sec:analysis} we introduce the QPs with inequality and equality constraints.
In \cref{sub:dual-formulation} we present the dual problem derived from the QCQP and define the constraint qualification known as the \sproperty\ necessary to establish strong duality between the primal and dual problems.
This results are well known in the literature.
In \cref{sec:main} we derive sufficient conditions for a QCQP with non-overlapping constraints to satisfy the \sproperty\, and in \cref{sec:existence} we derive sufficient conditions for the existence of solutions.
In \cref{sec:algorithms} we discuss different methods to find an optimal solution whenever the \sproperty\ is satisfied and, finally, in \cref{sec:rls} we apply our results to a robust least squares problem.
Some of the proofs have been moved to \cref{appendixA,appendixB,appendixC} to improve readability.

\subsection{Notation}
Throughout this paper we will use the following notation and operators:
The $p$-dimensional Euclidean space is denoted as $\mathbb{R}^p$, and $\mathbb{R}^p_+$ stands for the nonnegative orthant.
The set of symmetric matrices is denoted by $\mathcal{S}^p$, where these matrices have size $p\times p$.
For a matrix A, we write $A\succ 0$ to indicate that $A$ is positive definite, while $A \succeq 0$ means that $A$ is positive semidefinite.
We will use the symbol `$\unlhd$' to refer in a general way to either an equality or an inequality, when considering the constraints of an optimization problem.
More specifically, $\unlhd\in\set{\leq,\, =}$.
The trace of a square matrix $A$ is denoted by $\trace[A]$ and the \emph{Moore-Penrose inverse} by $A^{\dagger}$.
The block diagonal operator of matrices $A_1,\cdots,A_N$ is denoted by $\diag[A_1,\cdots,A_N]$.
The range and nullspace of a matrix A are denoted by $\range[A]$ and $\nullspace[A]$, respectively.
The identity matrix of dimension $p\times p$ is denoted by $\id_p$.
The element of a given matrix $A$ in row $i$ and column $j$ is denoted by $(A)_{ij}$.
We also use matrix $\evec$ of size $(p+1)\times (p+1)$ with single nonzero element $(\evec)_{p+1,p+1}=1$.

\section{Problem formulation}
\label{sec:analysis}
We consider the following QCQP with inequality constraints:
\begin{equation}
\begin{IEEEeqnarraybox}[][c]{r'l'l}
	\min_{\x} & \x ^T \Ao \x + 2 \bo ^T \x + \co \\
	\text{s.t.} & x_i^T \Ali x_i + 2 \bli ^T x_i + \cli \leq 0 & \forall i\in \N,
\end{IEEEeqnarraybox}
\tag{$\QPl$}
\label{QPl}
\end{equation}
where $\x \in \mathbb{R}^p$ is a column vector of size $p$;
$x_i \in \mathbb{R}^{n_i}$ is the $i$th subblock of vector $\x$ and has size $n_i$;
we suppose that $\x$ has $N$ blocks, and writing $\N=\set{1,\ldots,N}$, we have $\x=(x_i)_{i\in\N}$ and $p=\sum_{i\in\N}n_i$.
Matrix $\Ao\in\mathcal{S}^p$ is a symmetric matrix of size $p\times p$;
$\bo\in\mathbb{R}^p$ is a column vector;
$\cli$ for all $i$ and $\co\in\mathbb{R}$ are scalar numbers;
$\Ali\in\mathcal{S}^{n_i}$ for all $i$ are symmetric matrices; and finally,
$\bli\in\mathbb{R}^{n_i}$ for all $i$ are column vectors.
Note that we did not assume that $\Ao$ and $\Ali$ are positive semidefinite matrices, and, therefore, the problem is in general nonconvex.

In addition to problem \ref{QPl}, we also consider the following QCQP:
\begin{equation}
\begin{IEEEeqnarraybox}[][c]{r'l'l}
	\min_{\x} & \x ^T \Ao \x + 2 \bo ^T \x + \co \\
	\text{s.t.} & x_i^T \Ali x_i + 2 \bli ^T x_i + \cli = 0 & \forall i\in \N,
\end{IEEEeqnarraybox}
\tag{$\QPe$}
\label{QPe}
\end{equation}
where we substituted the inequality constraints with equality constraints.
In general our results generalize to QCQPs which have both separable equality and inequality constraints, but we omit this description as it is straightforward to deduce from our results.

We introduce some notation that will be useful for our presentation
\begin{IEEEeqnarray}{rCl}
	\bi & = & [0^T_{n_1\times 1},\ldots,\bli^T,\ldots,0^T_{n_N\times 1}]^T,   \\
	\Ai & = & \diag [0_{n_1\times n_1},\ldots,\Ali,\ldots,0_{n_N\times n_N}],   \label{eq:constraint-hessian}
\end{IEEEeqnarray}
where the previous expressions have nonzero elements in $i$th positions, $\bi\in\mathbb{R}^p$ and $\Ai\in\mathcal{S}^{p}$.
Furthermore, we also introduce extended matrices
\begin{IEEEeqnarray}{l"l"l}
	\Abco = 
	\begin{pmatrix}
		\Ao & \bo \\
		\bo^T & \co
	\end{pmatrix}, &
	\Abci = 
	\begin{pmatrix}
		\Ai & \bi \\
		\bi^T & \cli
	\end{pmatrix} & 
	\forall i\in\N
\label{eq:Abco-Abci}
\end{IEEEeqnarray}
that combine all the elements that appear in the quadratic problem.
Note that $\Abco$ and $\Abci\in \mathcal{S}^{p+1}$.

We define the following functions: 
\begin{equation}
	f(\x)=\x ^T \Ao \x + 2 \bo ^T \x + \co,
\end{equation}
which is the objective function of the QP; and, with a slight abuse of notation,
\begin{equation}
	g_i(x_i)  =  x_i^T \Ali x_i + 2 \bli ^T x_i + \cli
              =  \x^T \Ai \x + 2 \bi ^T \x + \cli = g_i(\x), 
\end{equation} 
which denotes the $i$th constraint.

We make the following assumption: 
\begin{assumption}[Slater's condition]
Assume the following:
\begin{enumerate}[label=\Alph*:]
 	\item  For problem \ref{QPl},  
        $\exists \hat{x}_i\in\mathbb{R}^{n_i}\,|\,g_i(\hat{x}_i)<0$ for all $i\in\N$.
 	\item  For problem \ref{QPe},
        $\exists \hat{x}^1_i,\hat{x}^2_i \in\mathbb{R}^{n_i}\,|\,g_i(\hat{x}^1_i) < 0 \wedge  g_i(\hat{x}^2_i)>0$ for all $i\in\N$.
        \label{assumption:slater-B}
\end{enumerate}
\label{assumption:slater}
\end{assumption}
The previous assumption guarantees some regularity condition that will be necessary in the upcoming derivations.
We remark that \ref{assumption:slater-B} was introduced as a constraint qualification for a QP with a single quadratic equality constraint in \cite{More1993}.

\subsection{Dual formulation}
\label{sub:dual-formulation}
QCQPs have been studied in \cite{Nesterov2000}, including the duals.
The Lagrangian of the QCQPs has the following form
\begin{equation}
	L(\x,\blam)= \x^T \Ao \x + 2\bo^T \x +\co +\sum_{i\in\N} \lambda_i \big( \x^T \Ai \x + 2 \bi^T \x + \cli \big),
\label{eq:lagrangian-QP}
\end{equation}
where we introduced dual variables $\blam=(\lambda_i)_{i\in\N}$ and $\lambda_i\in\mathbb{R}$.
The dual function $q(\blam) = \min_{\x} L(\x,\blam)$ is given by
\begin{equation}
	q(\blam) 	=  
	\begin{cases}	
			-(\bo+\sum_i \lambda_i \bi^T) (\Ao+\sum_i \lambda_i \Ai)^\dagger (\bo+\sum_i \lambda_i \bi^T) 
	  		  +\co+ \sum_{i}\lambda_i \cli \\
	  		 \hspace{3em} \text{if } \Ao+\sum_i \lambda_i \Ai \succeq 0 
	  		 \text{ and }(\bo+\sum_i \lambda_i \bi)\in\range(\Ao+\sum_i \lambda_i \Ai) \\
	  		 -\infty \hfill \text{otherwise.}
	\end{cases}
\label{eq:dual-function-QP}
\end{equation}
The dual problems of \ref{QPl} and \ref{QPe} can be rewritten using the Schur complement:
\begin{equation}
\begin{IEEEeqnarraybox}[][c]{r'l}
	\max_{\gamma,\blam} & \gamma \\
	\text{s.t.} &
	\begin{pmatrix}
		\Ao+\sum_i \lambda_i \Ai & \bo+\sum_i \lambda_i \bi \\
		(\bo+\sum_i \lambda_i \bi)^T & \co+ \sum_{i}\lambda_i \cli-\gamma
	\end{pmatrix} \succeq 0 \\
	& \gamma\in\mathbb{R},\quad\blam \in \Gamma
\end{IEEEeqnarraybox}
\label{eq:dual-problem-QPl}
\end{equation}
where we introduced the set $\Gamma$ as a general way to constrain $\blam$ in problem \ref{QPl} and \ref{QPe} accordingly, i.e., $\Gamma = \mathbb{R}^N_+$ in \ref{QPl}, and $\Gamma = \mathbb{R}^N$ in \ref{QPe}.
Furthermore, $\Gamma=\Gamma_1 \times \cdots \times \Gamma_N$, where $\Gamma_i = \mathbb{R}_+$ in \ref{QPl} or $\Gamma_i = \mathbb{R}$ in \ref{QPe}.

The dual problem given by \eqref{eq:dual-problem-QPl} is an underestimator of the optimal values of \ref{QPl} and \ref{QPe}.
Under the strong duality property the solution of the primal problem and the dual problem coincide, and  can be efficiently determined.

In order to derive the conditions that guarantee strong duality, we will be using results from~\cite{Jeyakumar2007}.
Before introducing the theorem that will motivate our proof, we first define the constraint qualification required for our QCQPs:
\begin{definition}[\sproperty\ \cite{Jeyakumar2007}]
A QCQP satisfies the \sproperty, if and only if, the following system of equations are equivalent $\forall\alpha\in\mathbb{R}$:
\begin{IEEEeqnarray}{l}
	g_i(x_i) \unlhd 0,\;\forall i\in\N \Longrightarrow f(\x)\geq \alpha, \quad \Longleftrightarrow
	\IEEEyesnumber \IEEEyessubnumber \label{def:sproperty-qpl-1} \\
	\exists \blam\in\Gamma\,|\,f(\x)+\sum_{i\in\N}\lambda_i g_i(x_i)\geq \alpha,\:\forall \x\in\mathbb{R}^p.
	\IEEEyessubnumber \label{def:sproperty-qpl-2}
\end{IEEEeqnarray}
\label{def:sproperty-qpl}
\end{definition}
We use the symbol `$\unlhd$' to refer in a general way to either an equality or an inequality, when considering the constraints of an optimization problem.
More specifically, $\unlhd\in\set{\leq,\, =}$.

It is immediate to see that \eqref{def:sproperty-qpl-2} $\Rightarrow$ \eqref{def:sproperty-qpl-1}: suppose \eqref{def:sproperty-qpl-2} is satisfied and $g_i(x_i)\unlhd 0$.
Then $\sum_{i\in\N}\lambda_i g_i(x_i) \leq 0$ and we necessarily have that $f(\x)\geq \alpha$.
The other implication is not always true and is difficult to know when it is satisfied.
The main result of this paper is dedicated to establish sufficient requirements that make such implication  hold.

We can express the system of equations from the definition of the \sproperty\  as a system of alternatives rather than a system of equivalencies.
This is helpful because we will be able to derive a theorem of alternative systems based on the new formulation.
A description of the theory of alternatives can be found in section 5.8 in \cite{Boyd2004}.
We reformulate the \sproperty\ with the following lemma:
\begin{lemma}[\sproperty\  reformulation]
	A QCQP possesses the \sproperty\,  if and only if, the following systems are strong alternatives for every $\alpha\in\mathbb{R}$:
	\begin{IEEEeqnarray}{l} \IEEEyesnumber
		\exists \x\in\mathbb{R}^p\,|\,f(\x) < \alpha\,\wedge\,g_i(x_i) \unlhd 0\quad\forall i\in\N, \quad\Longleftrightarrow  
		\IEEEyessubnumber \label{subeq:rank1-sproperty-1} \\
		\nexists \blam\in\Gamma\,|\,\Abco+\sum_{i\in\N} \lambda_i \Abci -\alpha \evec \succeq 0.
		\IEEEyessubnumber \label{eq:sa-21}
	\end{IEEEeqnarray}
\label{lemma:strong-alternatives}
\end{lemma}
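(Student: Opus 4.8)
The plan is to translate the two conditions of \cref{def:sproperty-qpl} into the two systems \eqref{subeq:rank1-sproperty-1}--\eqref{eq:sa-21} by homogenizing the quadratics, and then to rewrite the \sproperty\ as a statement about alternatives using the automatic implication \eqref{def:sproperty-qpl-2}$\Rightarrow$\eqref{def:sproperty-qpl-1} that was noted right after \cref{def:sproperty-qpl}.

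\emph{Step 1 (homogenization).} For any $\blam\in\Gamma$, any $\alpha\in\mathbb{R}$, and $\y=(\x,1)\in\mathbb{R}^{p+1}$, the definitions of $\Abco$, $\Abci$ and $\evec$ give the identity
\begin{equation*}
	\y^T\!\Big(\Abco+\sum_{i\in\N}\lambda_i\Abci-\alpha\evec\Big)\y \;=\; f(\x)+\sum_{i\in\N}\lambda_i g_i(x_i)-\alpha .
\end{equation*}
I then invoke the standard fact that a quadratic $h(\x)=\x^TM\x+2c^T\x+d$ satisfies $h(\x)\ge 0$ for all $\x\in\mathbb{R}^p$ if and only if $\left(\begin{smallmatrix}M & c\\ c^T & d\end{smallmatrix}\right)\succeq 0$, applied with $M=\Ao+\sum_i\lambda_i\Ai$, $c=\bo+\sum_i\lambda_i\bi$ and $d=\co+\sum_i\lambda_i\cli-\alpha$. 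This shows that, for a fixed $\alpha$, condition \eqref{def:sproperty-qpl-2} holds if and only if there is a $\blam\in\Gamma$ with $\Abco+\sum_i\lambda_i\Abci-\alpha\evec\succeq 0$; equivalently, the negation of \eqref{def:sproperty-qpl-2} is exactly \eqref{eq:sa-21}.

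\emph{Step 2 (logical bookkeeping).} Negating \eqref{def:sproperty-qpl-1} gives verbatim \eqref{subeq:rank1-sproperty-1}. By \cref{def:sproperty-qpl}, the \sproperty\ asserts \eqref{def:sproperty-qpl-1}$\Leftrightarrow$\eqref{def:sproperty-qpl-2} for every $\alpha$; since \eqref{def:sproperty-qpl-2}$\Rightarrow$\eqref{def:sproperty-qpl-1} always holds, this is equivalent to \eqref{def:sproperty-qpl-1}$\Rightarrow$\eqref{def:sproperty-qpl-2} for every $\alpha$, hence (contraposing and using Step 1) to \eqref{eq:sa-21}$\Rightarrow$\eqref{subeq:rank1-sproperty-1} for every $\alpha$. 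The reverse implication \eqref{subeq:rank1-sproperty-1}$\Rightarrow$\eqref{eq:sa-21} is just the contrapositive of the automatic \eqref{def:sproperty-qpl-2}$\Rightarrow$\eqref{def:sproperty-qpl-1} (again via Step 1 and the negation of \eqref{def:sproperty-qpl-1}). Therefore the \sproperty\ is equivalent to requiring, for every $\alpha$, that \eqref{subeq:rank1-sproperty-1} holds if and only if \eqref{eq:sa-21} holds — that is, exactly one of the feasibility system in \eqref{subeq:rank1-sproperty-1} and the semidefinite feasibility system $\exists\,\blam\in\Gamma:\Abco+\sum_{i\in\N}\lambda_i\Abci-\alpha\evec\succeq 0$ (whose infeasibility is \eqref{eq:sa-21}) is solvable, which is exactly the assertion that \eqref{subeq:rank1-sproperty-1} and \eqref{eq:sa-21} are strong alternatives.

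\emph{Main obstacle.} There is no real difficulty beyond Step 1; the only genuinely analytic point is the ``only if'' direction of the quadratic/semidefinite fact, i.e.\ deducing $M\succeq 0$ and the full $(p+1)\times(p+1)$ positive semidefiniteness from global nonnegativity of $h$. This is handled by evaluating $\y^T(\cdot)\y$ at $\y=(\z,t)$: for $t\ne 0$ it equals $t^2 h(\z/t)\ge 0$, and for $t=0$ one uses that $h(s\z)\to-\infty$ as $s\to\infty$ whenever $\z^TM\z<0$, together with the closedness of the positive semidefinite cone. Everything else is substitution of definitions and negation of quantifiers.
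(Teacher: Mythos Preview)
Your proof is correct and follows essentially the same route as the paper: reduce the lemma to the equivalence ``$f(\x)+\sum_i\lambda_i g_i(x_i)\ge\alpha$ for all $\x$'' $\Leftrightarrow$ ``$\Abco+\sum_i\lambda_i\Abci-\alpha\evec\succeq 0$'', and then negate. The only difference is in how that equivalence is justified: the paper computes the minimizer of $h(\x,\blam)$ explicitly, obtains the three conditions \eqref{eq:pos-sproperty1}--\eqref{eq:pos-sproperty3} (positive semidefinite Hessian, nonnegative optimal value via the pseudoinverse, and the range condition), and then packages them as positive semidefiniteness of the $(p{+}1)\times(p{+}1)$ matrix through the generalized Schur complement; you instead homogenize directly, handling $t\ne 0$ by scaling and $t=0$ by a limiting/coercivity argument. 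Your route is a touch more elementary (it avoids the pseudoinverse and the range condition), while the paper's route makes the structure of the minimum of the Lagrangian explicit, which is reused later in \eqref{eq:dual-function-QP} and in the recovery step \eqref{eq:recovery-x}.
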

We introduced $\evec$ to denote a square matrix of size $(p+1)$ with single nonzero element $(\evec)_{p+1,p+1}=1$.
\begin{proof}
See \cref{appendixA}.
\end{proof}

The following theorem reflects the importance of the \sproperty:

\begin{theorem}[Necessary and sufficient conditions, Theorem 3.1 in~\cite{Jeyakumar2007}]
	Let $\x^{\ast}$ be a feasible point of the QCQP.
	Suppose that the problem satisfies the \sproperty\ and $\blam\in \Gamma$.
	Then, $\x^{\ast}$ is a global minimizer of the QCQP if and only if 
	\begin{IEEEeqnarray}{l} \IEEEyesnumber
		\nabla_{\x} f(\x^{\ast})+\sum_{i\in\N} \lambda_i^{\ast} \nabla_{\x} g_i(\x^{\ast}) = 0, \IEEEyessubnumber \\
		\Ao + \sum_{i\in\N} \lambda_i^{\ast} \Ai \succeq 0, \quad   \lambda_i^{\ast} g_i(\x^{\ast}) = 0, \quad \forall i\in\N. \IEEEyessubnumber 
	\end{IEEEeqnarray}
\label{th:jeyakumar}
\end{theorem}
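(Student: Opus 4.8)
The plan is to prove the two implications separately. Only the ``only if'' direction uses the $\mathcal{S}$-property; the ``if'' direction is the classical Lagrangian sufficiency argument and needs nothing beyond feasibility of $\x^\ast$ and $\blam^\ast\in\Gamma$. For \emph{sufficiency}, assume the three displayed relations hold for some $\blam^\ast\in\Gamma$. I would first observe that the Hessian of $L(\cdot,\blam^\ast)$ is $2\bigl(\Ao+\sum_{i\in\N}\lambda_i^\ast\Ai\bigr)\succeq 0$, so $L(\cdot,\blam^\ast)$ is a convex quadratic, and the stationarity relation $\nabla_\x L(\x^\ast,\blam^\ast)=0$ then makes $\x^\ast$ an unconstrained global minimizer of $L(\cdot,\blam^\ast)$. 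For any feasible $\x$ one has $\sum_{i\in\N}\lambda_i^\ast g_i(x_i)\le 0$ (nonnegative multipliers against nonpositive constraint values in \ref{QPl}; each term zero in \ref{QPe}), hence
\[ f(\x)\;\ge\; f(\x)+\smsum_{i\in\N}\lambda_i^\ast g_i(x_i)\;=\;L(\x,\blam^\ast)\;\ge\;L(\x^\ast,\blam^\ast)\;=\;f(\x^\ast), \]
the last equality by complementary slackness. Thus $\x^\ast$ is a global minimizer.

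For \emph{necessity}, set $\alpha^\ast:=f(\x^\ast)$. Since $\x^\ast$ is a global minimizer, every feasible $\x$ satisfies $f(\x)\ge\alpha^\ast$, i.e.\ implication \eqref{def:sproperty-qpl-1} holds at $\alpha=\alpha^\ast$; by the $\mathcal{S}$-property (equivalently, by \cref{lemma:strong-alternatives}, since system \eqref{subeq:rank1-sproperty-1} is then infeasible) there exists $\blam^\ast\in\Gamma$ with $\Abco+\sum_{i\in\N}\lambda_i^\ast\Abci-\alpha^\ast\evec\succeq 0$, equivalently $L(\x,\blam^\ast)\ge\alpha^\ast$ for all $\x\in\mathbb{R}^p$. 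Reading off the leading $p\times p$ block of this matrix inequality gives $\Ao+\sum_{i\in\N}\lambda_i^\ast\Ai\succeq 0$, the second condition. Next, evaluating $L(\cdot,\blam^\ast)\ge\alpha^\ast$ at $\x=\x^\ast$ and using feasibility of $\x^\ast$, which yields $\sum_{i\in\N}\lambda_i^\ast g_i(x_i^\ast)\le 0$, we get $L(\x^\ast,\blam^\ast)\le f(\x^\ast)=\alpha^\ast$; combined with $L(\x^\ast,\blam^\ast)\ge\alpha^\ast$ this forces $\sum_{i\in\N}\lambda_i^\ast g_i(x_i^\ast)=0$, and since each summand is nonpositive each must vanish, i.e.\ $\lambda_i^\ast g_i(\x^\ast)=0$ for all $i$, the third condition. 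Finally, $L(\x,\blam^\ast)\ge\alpha^\ast=L(\x^\ast,\blam^\ast)$ for every $\x$ says $\x^\ast$ minimizes the differentiable quadratic $L(\cdot,\blam^\ast)$, hence $\nabla_\x L(\x^\ast,\blam^\ast)=0$, which is the stationarity condition.

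The proof itself is short: essentially all the work has been displaced into \cref{lemma:strong-alternatives} and the $\mathcal{S}$-property. The one genuinely delicate point is the passage from ``$L(\x,\blam^\ast)\ge\alpha^\ast$ for all $\x\in\mathbb{R}^p$'' to the homogeneous matrix inequality on $\mathbb{R}^{p+1}$, where the degenerate ($t=0$) direction requires a limiting argument — but this is exactly what the reformulation lemma already supplies, so invoking it is the cleanest route. Beyond that, the only thing to watch carefully is consistent sign bookkeeping for $\Gamma=\mathbb{R}^N_+$ (problem \ref{QPl}) versus $\Gamma=\mathbb{R}^N$ (problem \ref{QPe}) in the two places where the inequality $\sum_{i\in\N}\lambda_i^\ast g_i\le 0$ and its termwise consequence are used.
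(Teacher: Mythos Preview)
Your proof is correct. Note, however, that the paper does not supply its own proof of this statement: it is quoted verbatim as Theorem~3.1 of \cite{Jeyakumar2007} and used as a black box, so there is no ``paper's proof'' to compare against. What you have written is precisely the standard argument one finds in that reference (and in the broader literature on the $\mathcal S$-procedure): sufficiency via convexity of the Lagrangian plus complementary slackness, and necessity by invoking the $\mathcal S$-property at $\alpha=f(\x^\ast)$ to produce a multiplier $\blam^\ast$ for which $L(\cdot,\blam^\ast)\ge f(\x^\ast)$ globally, then reading off the three conditions.

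One small simplification: in the necessity direction you route through \cref{lemma:strong-alternatives} to obtain the matrix inequality and then extract the leading block. This is fine, but unnecessary: the $\mathcal S$-property in its original form~\eqref{def:sproperty-qpl-2} already gives you $L(\x,\blam^\ast)\ge\alpha^\ast$ for all $\x\in\mathbb{R}^p$ directly, and positive semidefiniteness of the Hessian $\Ao+\sum_i\lambda_i^\ast\Ai$ follows immediately because a quadratic bounded below on $\mathbb{R}^p$ must have a positive semidefinite Hessian. The homogenization to $\mathbb{R}^{p+1}$ and its ``delicate $t=0$ direction'' are not actually needed here.
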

\Cref{th:jeyakumar} states that if the \sproperty\  holds in the QCQP, then, the KKT conditions of the problem plus $\Ao + \sum_{i\in\N} \lambda_i^{\ast} \Ai \succeq 0$ are sufficient and necessary to establish optimality of the solution.
The next proposition explicitly relates the \sproperty\ with zero gap between the primal and dual problems:
\begin{proposition}
	Strong duality holds between \ref{QPl}, \ref{QPe} and problem \eqref{eq:dual-problem-QPl}, if and only if the \sproperty\ is satisfied.
\label{proposition:strong-duality}
\end{proposition}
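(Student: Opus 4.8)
The plan is to use Lemma~\ref{lemma:strong-alternatives} to rewrite the \sproperty\ as an identity between two explicit subsets of $\mathbb{R}$ --- the set of valid primal lower bounds and the set of dual-feasible values of $\gamma$ --- and then to observe that this identity is precisely the statement that \ref{QPl}/\ref{QPe} and \eqref{eq:dual-problem-QPl} have the same optimal value (with that value attained by the dual). Write $p^{\ast}$ for the optimal value of \ref{QPl} (resp.\ \ref{QPe}) and $d^{\ast}$ for the optimal value of \eqref{eq:dual-problem-QPl}; weak duality, $d^{\ast}\le p^{\ast}$, is already recorded in Section~\ref{sub:dual-formulation}. Since \eqref{eq:dual-problem-QPl} is a semidefinite program, I read ``strong duality holds'' as: $p^{\ast}=d^{\ast}$, together with attainment of the supremum in \eqref{eq:dual-problem-QPl} whenever $p^{\ast}$ is finite (this attainment is what makes ``the solution of the dual problem'' in Section~\ref{sub:dual-formulation} meaningful).

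Introduce
\[
  P=\{\alpha\in\mathbb{R}:f(\x)\ge\alpha\text{ for every feasible }\x\},\qquad
  D=\bigl\{\gamma\in\mathbb{R}:\exists\,\blam\in\Gamma\text{ with }\Abco+\sum_{i\in\N}\lambda_i\Abci-\gamma\evec\succeq 0\bigr\}.
\]
By the Schur-complement form of \eqref{eq:dual-problem-QPl} the constraint matrix there is exactly $\Abco+\sum_{i\in\N}\lambda_i\Abci-\gamma\evec$, so $D$ is the set of dual-feasible $\gamma$ and $d^{\ast}=\sup D$. Directly from the definition, $P=(-\infty,p^{\ast}]$ (with $P=\mathbb{R}$ if $p^{\ast}=+\infty$ and $P=\emptyset$ if $p^{\ast}=-\infty$), hence $\sup P=p^{\ast}$. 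Because $\evec\succeq 0$ affects only the $(p+1,p+1)$ entry, $\gamma\in D$ and $\gamma'\le\gamma$ imply $\gamma'\in D$, so $D$ is downward closed.

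Next I would record the two elementary translations and apply the lemma. The system \eqref{subeq:rank1-sproperty-1} is solvable at a given $\alpha$ iff some feasible $\x$ has $f(\x)<\alpha$, i.e.\ iff $\alpha\notin P$; the statement \eqref{eq:sa-21} holds at $\alpha$ iff there is no $\blam\in\Gamma$ with $\Abco+\sum_{i\in\N}\lambda_i\Abci-\alpha\evec\succeq 0$, i.e.\ iff $\alpha\notin D$. By Lemma~\ref{lemma:strong-alternatives} the \sproperty\ is equivalent to: for every $\alpha\in\mathbb{R}$, \eqref{subeq:rank1-sproperty-1} holds iff \eqref{eq:sa-21} holds; substituting the translations, this is exactly ``$\alpha\notin P\iff\alpha\notin D$ for all $\alpha$'', i.e.\ $P=D$. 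So it remains to verify that $P=D$ is equivalent to strong duality. If $P=D$, then $p^{\ast}=\sup P=\sup D=d^{\ast}$, and if $p^{\ast}$ is finite then $p^{\ast}=\max P\in D$, i.e.\ $\gamma=d^{\ast}$ is dual-feasible and the dual value is attained. Conversely, if strong duality holds and $p^{\ast}$ is finite, then $d^{\ast}\in D$, so downward-closedness gives $(-\infty,d^{\ast}]\subseteq D$ while every element of $D$ is $\le\sup D=d^{\ast}$, whence $D=(-\infty,d^{\ast}]=(-\infty,p^{\ast}]=P$; and if $p^{\ast}=+\infty$ (resp.\ $-\infty$) then weak duality forces $d^{\ast}=+\infty$ (resp.\ $-\infty$), so the downward-closed set $D$ equals $\mathbb{R}$ (resp.\ $\emptyset$), which again equals $P$.

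The argument is essentially bookkeeping once Lemma~\ref{lemma:strong-alternatives} is available; the points I expect to need care are presentational: (a) making explicit that the \sproperty\ encodes attainment of the dual optimal value, not merely the equality $p^{\ast}=d^{\ast}$, which is why the notion of strong duality should be stated with that attainment included; and (b) handling the boundary cases $p^{\ast}=\pm\infty$ cleanly so that the equivalence holds without exception.
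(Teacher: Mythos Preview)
Your argument is correct and takes a genuinely different route from the paper's. The paper proves the implication ``strong duality $\Rightarrow$ \sproperty'' by invoking \cref{th:jeyakumar} (arguing that if strong duality holds then the optimality conditions listed there are met at a pair $(\x^{\ast},\blam^{\ast})$, hence the \sproperty\ holds), and the converse by picking $\alpha=p^{\ast}$ in \cref{def:sproperty-qpl} and reading off dual feasibility of $\gamma=p^{\ast}$. You instead work entirely through \cref{lemma:strong-alternatives}, recasting the \sproperty\ as the set identity $P=D$ and then showing that this identity is exactly $p^{\ast}=d^{\ast}$ together with dual attainment. Your approach is more self-contained (it avoids the detour through \cref{th:jeyakumar}, whose statement already has the \sproperty\ as a hypothesis), it treats both directions symmetrically, and it is explicit about the degenerate cases $p^{\ast}=\pm\infty$ and about dual attainment---points the paper leaves implicit. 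One small slip: in the last sentence you write that ``weak duality forces $d^{\ast}=+\infty$'' when $p^{\ast}=+\infty$; weak duality alone gives only $d^{\ast}\le+\infty$. What actually forces the equality there is the standing assumption of strong duality in that direction (and in any case $p^{\ast}=+\infty$ is ruled out under \cref{assumption:slater}, since the feasible set is nonempty). This does not affect the validity of your argument.
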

Sufficient conditions to satisfy the \sproperty\ will be presented in \cref{theorem:sproperty-result} and \cref{theorem:sproperty-result2}.
We remark that the \sproperty\ is a constraint qualification. 
It has the interesting consequence that it only occurs whenever there is zero duality gap between the primal and dual problems of the QCQP.
Therefore, there is an immediate equivalence between strong duality and the \sproperty.
\begin{proof}
Suppose strong duality holds. 
Then, the KKT conditions derived from \eqref{eq:lagrangian-QP} and $\Ao + \sum_{i\in\N} \lambda_i^{\ast} \Ai \succeq 0$ from problem \eqref{eq:dual-problem-QPl} are satisfied for optimal points $\x^{\ast}$ and $\blam^{\ast}$.
Because of \cref{th:jeyakumar} the \sproperty\ holds.

Suppose now that the \sproperty\ holds.
Assume that the QCQP has a minimum value which we denote with $\gamma$:
\begin{equation}
	\forall \x\in\mathbb{R}^p\,|\,\x^T \Ai \x + 2 \bi^T \x + \cli \unlhd 0\quad (\forall i\in\N) \Longrightarrow 
	\x^T \Ao \x + 2 \bo^T \x + \co \geq \gamma
\end{equation}
Because of the \sproperty, there exists a $\blam\in\Gamma$ such that $\Abco-\gamma \evec+\sum_i\Abci\succeq 0$ which makes problem \eqref{eq:dual-problem-QPl} feasible.
Because of this, strong duality holds.
\end{proof}

\section{Analysis of the \texorpdfstring{\sproperty}{S-property} in QCQPs}
\label{sec:main}
\Cref{lemma:strong-alternatives} establishes necessary and sufficient conditions for the \sproperty\ to hold.
In this section we will derive sufficient conditions that will guarantee that these requirements are satisfied.

The road map is as follows: we first analyze a system of strong alternatives in the context of semidefinite programming (SDP), whose variable $Z$ can be a matrix of any rank.
Then, we reformulate the system to an equivalent one with easier to analyze properties, and derive the conditions that guarantee the existence of a rank 1 solution in such system.
Finally, we will relate the existence of a rank 1 solution with \cref{lemma:strong-alternatives} and state our main result over the \sproperty.
The importance of establishing a rank 1 solution resides in being able to recover the QCQP from the SDP formulation.
These steps are summarized in \cref{fig:block-diagram}.
Note that the equivalences may hold under specific requirements, which we will state accordingly.
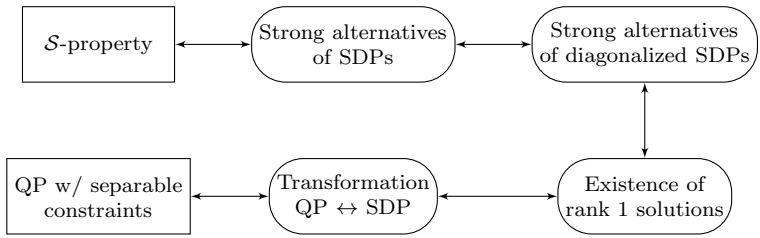
\begin{figure}
	\centering
	\begin{tikzpicture}[>=latex']
        \tikzset{block/.style= {draw, rectangle, align=center,minimum width=2cm,minimum height=1cm},
        rblock/.style={draw, shape=rectangle,rounded corners=1.5em,align=center,minimum width=2cm,minimum height=1cm}
        }
        \node [block]  (sproperty) {\sproperty};
        \node [rblock, right = of sproperty] (sdp1) {Strong alternatives\\ of SDPs};
        \node [rblock, right = of sdp1] (sdp2) {Strong alternatives \\ of diagonalized SDPs};
        \node [block, below = of sproperty] (qp) {QP w/ separable\\ constraints};
        \node [rblock, below = of sdp1] (qp_transf) {Transformation\\ QP $\leftrightarrow$ SDP};
        \node [rblock, below = of sdp2] (rank1) {Existence of\\ rank 1 solutions};

        \path[draw,<->] (sproperty) edge (sdp1)
                    (sdp1) edge (sdp2)
                    (sdp2) edge (rank1)
                    (rank1) edge (qp_transf)
                    (qp_transf) edge (qp)
                    ;
\end{tikzpicture}
	\caption{Block diagram of equivalences to prove that the \sproperty\ holds.}
	\label{fig:block-diagram}
\end{figure}

The next lemma arises from a known result in the literature, which establishes that systems of linear matrix inequalities (LMIs) satisfy the \sproperty. It is expressed in terms of strong alternatives.
\begin{lemma}
Under \cref{assumption:slater}, the following systems are strong alternatives for all $\alpha\in\mathbb{R}$:
\begin{IEEEeqnarray}{l} \IEEEyesnumber\phantomsection \label{eq:sdp-strong-alternative-systems}
    \exists Z \succeq 0\,|\,\trace[(\Abco-\alpha \evec)Z]<0\;\wedge\;\trace[\Abci Z] \unlhd 0\quad\forall i\in\N, \quad \Longleftrightarrow 
    \IEEEyessubnumber \label{subeq:sdp-strong-alternatives1}\\
	\nexists \blam\in\Gamma\,|\,\Abco+\sum_{i\in\N}\lambda_i \Abci -\alpha \evec \succeq 0. \IEEEyessubnumber
\end{IEEEeqnarray}
\label{lemma:sdp-strong-alternatives}
\end{lemma}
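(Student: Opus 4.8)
The plan is to read \eqref{eq:sdp-strong-alternative-systems} as encoding a primal--dual pair of semidefinite programs and to establish strong duality for them. Put $M_0:=\Abco-\alpha\evec$ and $M_i:=\Abci$, and consider the homogeneous SDP
\begin{equation*}
  p^{\star}\;:=\;\inf\bigl\{\,\trace[M_0 Z]\ :\ \trace[M_i Z]\unlhd 0\ \ \forall i\in\N,\ \ Z\succeq 0\,\bigr\}.
\end{equation*}
Its feasible set is a cone containing $Z=0$, so $p^{\star}\in\{0,-\infty\}$, and $p^{\star}=-\infty$ exactly when \eqref{subeq:sdp-strong-alternatives1} is solvable. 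The Lagrangian dual function $\blam\mapsto\inf_{Z\succeq 0}\trace\bigl[(M_0+\sum_i\lambda_i M_i)Z\bigr]$ equals $0$ when $M_0+\sum_i\lambda_i M_i\succeq 0$ and $-\infty$ otherwise, by self-duality of $\mathcal{S}^{p+1}_+$; hence the dual optimum $d^{\star}$ (also in $\{0,-\infty\}$) equals $0$ precisely when some $\blam\in\Gamma$ satisfies $\Abco+\sum_i\lambda_i\Abci-\alpha\evec\succeq 0$. Since \eqref{subeq:sdp-strong-alternatives1} holds iff $p^{\star}=-\infty$ and the matrix inequality is solvable iff $d^{\star}=0$, the claim that the two systems are strong alternatives for every $\alpha$ is precisely the statement $p^{\star}=d^{\star}$, and that is what I would prove.

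Weak duality $p^{\star}\ge d^{\star}$ is the elementary half and uses no regularity: if $Z$ is primal feasible and $\blam\in\Gamma$ makes $\Abco+\sum_i\lambda_i\Abci-\alpha\evec\succeq 0$, then $0\le\trace\bigl[(M_0+\sum_i\lambda_i M_i)Z\bigr]=\trace[M_0 Z]+\sum_i\lambda_i\trace[M_i Z]\le\trace[M_0 Z]$, using $\lambda_i\ge 0$, $\trace[M_iZ]\le 0$ in \ref{QPl} and $\trace[M_iZ]=0$ in \ref{QPe}. This already rules out that \eqref{subeq:sdp-strong-alternatives1} and the existence of such a $\blam$ both hold.

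The substantive half is $p^{\star}\le d^{\star}$, i.e.\ forbidding that \emph{neither} of them holds. Here I would invoke the standard strong duality theorem for conic programs: a bounded, strictly feasible primal SDP satisfies $p^{\star}=d^{\star}$. Boundedness is automatic since $p^{\star}=0$ whenever \eqref{subeq:sdp-strong-alternatives1} fails, and because the constraints $\trace[\Abci Z]\unlhd 0$ are affine in $Z$, strict feasibility only requires a $Z\succ 0$ with $\trace[\Abci Z]\unlhd 0$ for all $i$. For \ref{QPl} this is immediate: stacking the Slater points of \cref{assumption:slater} into $\hat\x=(\hat x_i)_{i\in\N}$ gives $\trace\left[\Abci\begin{pmatrix}\hat\x\hat\x^T&\hat\x\\\hat\x^T&1\end{pmatrix}\right]=g_i(\hat x_i)<0$ for all $i$, and adding $\varepsilon\,\id_{p+1}$ keeps the traces negative while turning the matrix positive definite. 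For \ref{QPe} I would exploit separability: for each $i$ choose $w_i$ and $\Delta_i\succ 0$ with $\trace[\Ali\Delta_i]=-g_i(w_i)$ --- possible because \ref{assumption:slater-B} supplies a $w_i$ with $g_i(w_i)$ of the sign putting $-g_i(w_i)$ into $\{\trace[\Ali\Delta]:\Delta\succ0\}$, which is $\mathbb{R}$, an open half-line, or $\{0\}$ according as $\Ali$ is indefinite, nonzero semidefinite, or zero --- and then $Z:=\begin{pmatrix}ww^T+\diag[\Delta_1,\ldots,\Delta_N]&w\\w^T&1\end{pmatrix}$ with $w:=(w_i)_{i\in\N}$ is positive definite (its Schur complement is $\diag[\Delta_1,\ldots,\Delta_N]\succ0$) and satisfies $\trace[\Abci Z]=g_i(w_i)+\trace[\Ali\Delta_i]=0$ for all $i$.

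Granting strict feasibility, strong SDP duality gives $p^{\star}=d^{\star}$, which with weak duality says exactly one of the two systems in \eqref{eq:sdp-strong-alternative-systems} holds for each $\alpha$ --- the claim. I expect the only genuinely technical point to be the blockwise construction of a strictly feasible $Z$ for \ref{QPe}; the rest is bookkeeping around the conic duality theorem. Alternatively, the lemma can be quoted directly as the specialization to the matrix variable $Z$ of the known fact that systems of LMIs possess the \sproperty\ (conic duality for the self-dual cone $\mathcal{S}^{p+1}_+$), rephrased as strong alternatives exactly as in \cref{lemma:strong-alternatives}.
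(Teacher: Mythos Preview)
Your argument is correct and reaches the same conclusion, but by a different route from the paper. The paper follows the template of Example~5.14 in Boyd--Vandenberghe: it introduces the auxiliary scalar $s$ and studies $\min_{s,\blam}\,s$ subject to $\Abco+\sum_i\lambda_i\Abci-\alpha\evec\succeq -s\,\id_{p+1}$, for which Slater's condition on the $(s,\blam)$ side is free (take $s$ large); the work then shifts to verifying that the infimum is attained, which is done via the implication $\nu\in\Gamma,\ \sum_i\nu_i\Abci\succeq 0\Rightarrow\sum_i\nu_i\Abci=0$, argued from \cref{assumption:slater} and the non-overlap of the $\Abci$'s. You instead keep the homogeneous SDP in $Z$ and put all the effort into building a \emph{primal} Slater point $Z\succ 0$ with $\trace[\Abci Z]\unlhd 0$, using the block structure directly (the $\varepsilon\id$ perturbation for \ref{QPl}, and the blockwise choice of $\Delta_i\succ 0$ with $\trace[\Ali\Delta_i]=-g_i(w_i)$ for \ref{QPe}). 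Both approaches consume \cref{assumption:slater}, but at different places: the paper uses it to certify the attainment condition on the dual side, whereas you use it to manufacture strict feasibility on the $Z$ side. Your construction for \ref{QPe} is the genuinely new ingredient relative to the paper and is sound; the case $\Ali=0$ is covered because \ref{assumption:slater-B} forces $g_i$ to change sign, hence to vanish somewhere by continuity. The paper's version has the advantage of being a direct citation of a textbook example, while yours makes the role of separability more transparent and avoids the extra $s$ variable.
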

\begin{proof}
See \cref{appendixA}.
\end{proof}

We will perform a transformation of the system in \cref{subeq:sdp-strong-alternatives1} so that the matrices in the constraints become diagonal or \emph{almost} diagonal (some elements may remain in the last column/row as we describe later).
These reformulations will be helpful because the new forms will be easier to study.
Before analyzing them, we introduce some definitions regarding the theory of matrix congruency:

\begin{definition}
Two matrices $A$ and $B$ over a field are called congruent if there exists an invertible matrix $P$ over the same field such that
\begin{equation}
	P^T A P = B.
\end{equation}
\end{definition}

\begin{definition} \label{def:diagonalizable}
A set of matrices $\set{A_1,A_2,\ldots,A_N}$ is said to be simultaneously diagonalizable via congruence, if there exists a nonsingular matrix $P$ such that $P^T \Ai P$ is diagonal for every matrix $\Ai$.
\end{definition}

In general, finding sets of matrices that are simultaneously diagonalizable is a difficult task and it remains an open question \cite{Hiriart-Urruty2002}.
Nonetheless, we will study one simple case of simultaneously diagonalizable matrices.
Finally, the following theorem emphasizes an important property of congruent systems:

\begin{theorem}[Sylvester's law of inertia, theorem 4.5.8 in \cite{Horn2012}]:
Two congruent matrices $A$ and $B$ 
have the same number of positive and negative eigenvalues.
This implies that the definiteness of matrix $A$ is preserved onto $B$.
\label{remark:sylvester}
\end{theorem}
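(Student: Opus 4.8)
The plan is to prove Sylvester's law of inertia through the variational characterisation of the number of positive eigenvalues, which makes invariance under congruence transparent. Throughout I take $A,B\in\mathcal{S}^p$ to be real symmetric, which is the setting in which congruence is used in this paper, and I write $n_+(M)$, $n_-(M)$, $n_0(M)$ for the number of positive, negative and zero eigenvalues of a symmetric matrix $M$ (counted with multiplicity); by the spectral theorem these are well defined and $n_+(M)+n_-(M)+n_0(M)=p$.

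The first step is a lemma: for symmetric $M$, $n_+(M)=\max\{\dim V: V\subseteq\mathbb{R}^p \text{ a subspace},\ x^T M x>0 \text{ for all } x\in V\setminus\{0\}\}$. For the lower bound I would orthogonally diagonalise $M=Q\Lambda Q^T$ and take $V$ to be the span of the columns of $Q$ attached to the positive diagonal entries of $\Lambda$; the form is positive definite on $V$ and $\dim V=n_+(M)$. For the upper bound, let $W$ be the span of the columns of $Q$ attached to the nonpositive entries, so $\dim W=p-n_+(M)$ and $x^T M x\le 0$ on $W$; if some subspace $V$ had $\dim V>n_+(M)$ then $\dim V+\dim W>p$, forcing $V\cap W\neq\{0\}$ and contradicting positivity on $V$. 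Hence the maximum equals $n_+(M)$ exactly.

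Now suppose $B=P^T A P$ with $P$ invertible. The identity $x^T B x=(Px)^T A (Px)$ shows that a subspace $V$ satisfies $x^T B x>0$ for all nonzero $x\in V$ if and only if the subspace $PV$ satisfies $y^T A y>0$ for all nonzero $y\in PV$; since $P$ is invertible, $\dim(PV)=\dim V$ and $V\mapsto PV$ is a bijection between subspaces of each fixed dimension, with inverse $U\mapsto P^{-1}U$. Hence the families of subspace-dimensions admitted in the lemma for $A$ and for $B$ coincide, so $n_+(A)=n_+(B)$. Running the same argument with $A,B$ replaced by $-A,-B$ (congruent via the same $P$) gives $n_-(A)=n_-(B)$, and then $n_0(A)=n_0(B)$ follows from $n_++n_-+n_0=p$; equivalently $\dim\nullspace[A]=\dim\nullspace[B]$. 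In particular $A\succ 0\iff n_+(A)=p\iff n_+(B)=p\iff B\succ 0$, and likewise for $\succeq 0$ and for the negative cases, so the definiteness of $A$ transfers to $B$.

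I expect the only delicate point to be the upper-bound half of the variational lemma — the dimension count $\dim V+\dim W>p\Rightarrow V\cap W\neq\{0\}$, together with the observation that it is precisely here that the zero eigenvalues of $M$ must be kept (they live in $W$). The other point worth flagging is that invertibility of $P$ is essential: it is what makes $\dim(PV)=\dim V$, and the statement fails outright for singular $P$. A slicker but less self-contained alternative would use that $GL_p(\mathbb{R})$ has exactly two path components: connect $P$ through invertible matrices to either $\id_p$ or $\diag[-1,1,\ldots,1]$, note that along such a path $P_t^T A P_t$ keeps constant rank while its eigenvalues move continuously, so no eigenvalue can cross $0$ and $n_\pm$ stay constant, and then check the two endpoints by inspection.
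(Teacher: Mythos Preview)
Your argument is correct: the variational characterisation of $n_+(M)$ as the maximal dimension of a subspace on which the quadratic form is positive definite is standard, your proof of it is clean, and the congruence step transferring subspaces via $V\mapsto PV$ is exactly the right idea. The deduction of $n_-$ from $-A,-B$ and of $n_0$ by subtraction is fine, as is the corollary on definiteness.

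There is, however, nothing in the paper to compare against. The paper does not prove \cref{remark:sylvester}; it merely quotes the statement with a citation to Horn and Johnson (Theorem~4.5.8) and then uses it as a black box (for instance in the proof of \cref{lemma:equivalence-between-Z-and-Y}). So your proposal goes strictly beyond what the paper supplies. If you want to align with the paper you can simply cite the reference; if you want a self-contained write-up, what you have is already a complete and standard proof.
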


Consider the following matrix $P$ of dimension $(p+1)\times (p+1)$
\begin{equation}
	P=
	\begin{pmatrix}
		P_1 & 0_{n_1 \times n_2} & \cdots & p_{1}\\
		0_{n_2 \times n_1} & P_2 & \cdots & p_{2}\\
		\vdots & \ddots & \ddots & \vdots \\
		0_{n_N \times n_1} & \cdots & P_N & p_{N}\\
		0_{1 \times n_1} & 0_{1 \times n_2} & \cdots & 1
	\end{pmatrix},
\label{eq:p-matrix}
\end{equation}
where $P_i$ has dimension $n_i\times n_i$ and $p_i\in\mathbb{R}^{n_i}$.
We obtain the following congruent matrices for every $i\in\N$:
\begin{equation}
	F_i = P^T \Abci P=
	\begin{pmatrix}
	    0 & 0 & \cdots &  0 \\
		\vdots & P_i^T \Ali P_i & 0 & P_i^T(\Ali p_{i} + \bli)\\
		\vdots & \cdots & \ddots & 0\\
		0 & (p_{i}^T \Ali^T +\bli^T)P_i & 0 & p_{i}^T \Ali p_i+2\bli^T p_{i}+\cli
	\end{pmatrix}.
\end{equation}

We consider two cases.
The first case assumes that $\bli\in\range[\Ali]$ for every $i\in\N$.
Then, since every $\Ali$ are real symmetric matrices, they allow a spectral decomposition of the form $\Ali=Q_i\Delta_i Q^T_i$ where $Q_i$ are orthogonal matrices, i.e., $Q_i Q_i^T=Q_i^T Q_i=\id_{n_i}$, and $\Delta_i$ are diagonal.
Choosing values $P_i = Q_i \Delta_i^{\dagger/2}$
and $p_i=-\Ali^{\dagger} \bli$ satisfy that every $F_i=P^T \Abci P$ for all $i\in\N$ are simultaneously diagonal.
Moreover, this choice of values ensures that $P$ is invertible.
Finally, we note that $F_0=P^T(\Abco-\alpha \evec) P$ is not diagonal in general.

The second case assumes that for some $i\in\N$, $\bli\notin\range[\Ali]$.
In this case, $F_i$ will not become diagonal since it is not possible that $P_i^T(\Ali p_{i} + \bli) = 0$ for any $P_i$ which is also full rank.
Nonetheless, the previous choice of matrix $P$ ensures that some components will be \emph{almost} diagonalized, and some elements will remain in the last column and row.
However, if for some $j$ index $(F_i)_{p+1,j} = (F_i)_{j,p+1} \neq 0$, we necessarily have that $(F_i)_{jj} = 0$.
This structure of $F_i$ can still be exploited to obtain strong duality results.

Now we can reformulate \cref{subeq:sdp-strong-alternatives1} to an equivalent system as follows:
\begin{lemma}
The following systems are equivalent:
\begin{IEEEeqnarray}{l} \IEEEyesnumber
	\exists Z\succeq 0 \,|\,\trace[(\Abco-\alpha\evec) Z]<0\,\wedge\,\trace[\Abci Z] \unlhd 0 \quad \forall i\in\N,\quad \Longleftrightarrow 
	\IEEEyessubnumber \label{eq:ZY-domain-1} \\
	\exists Y\succeq 0 \,|\,\trace[F_0 Y]<0\,\wedge\,\trace[F_i Y] \unlhd 0 \quad \forall i\in\N. 
	\IEEEyessubnumber \label{eq:ZY-domain-2}
\end{IEEEeqnarray}
\label{lemma:equivalence-between-Z-and-Y}
\end{lemma}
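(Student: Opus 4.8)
The plan is to exhibit an explicit bijection between the feasible set of the first system and that of the second, using the congruence matrix $P$ from \eqref{eq:p-matrix}. Recall that $F_i = P^T \Abci P$ for all $i\in\N$, and set $F_0 = P^T(\Abco - \alpha\evec)P$. Since $P$ is invertible (as noted after \eqref{eq:p-matrix} in both cases considered), the map $Z \mapsto Y := P^{-1} Z P^{-T}$ is a bijection on $\mathcal{S}^{p+1}$, with inverse $Y \mapsto Z = P Y P^T$.

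First I would check that this map preserves positive semidefiniteness: if $Z\succeq 0$ then $Y = P^{-1}Z P^{-T} = (P^{-1})Z(P^{-1})^T \succeq 0$, and conversely $Z = PYP^T \succeq 0$ whenever $Y\succeq 0$. (This is just the elementary direction of congruence invariance, or a special case of \cref{remark:sylvester}.) Second, I would verify that the trace expressions match up. Using cyclicity of the trace,
\begin{equation*}
	\trace[\Abci Z] = \trace[\Abci\, P Y P^T] = \trace[P^T \Abci P\, Y] = \trace[F_i Y],
\end{equation*}
and identically $\trace[(\Abco - \alpha\evec)Z] = \trace[F_0 Y]$. Hence the strict inequality $\trace[(\Abco-\alpha\evec)Z]<0$ holds iff $\trace[F_0 Y]<0$, and each constraint $\trace[\Abci Z]\unlhd 0$ holds iff $\trace[F_i Y]\unlhd 0$, with the symbol $\unlhd$ interpreted the same way (as $\leq$ for \ref{QPl} and as $=$ for \ref{QPe}) on both sides. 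Therefore $Z$ is feasible for \eqref{eq:ZY-domain-1} if and only if $Y = P^{-1}ZP^{-T}$ is feasible for \eqref{eq:ZY-domain-2}, which establishes the equivalence.

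There is no real obstacle here; the only thing to be careful about is that the argument goes through uniformly in \emph{both} of the two cases distinguished before the lemma — whether or not $\bli \in \range[\Ali]$ for every $i$ — since in both cases the constructed $P$ is invertible and $F_i = P^T\Abci P$. The precise diagonal (or \emph{almost}-diagonal) structure of the $F_i$ plays no role in this lemma; it is only the congruence relation and the invertibility of $P$ that matter, and that structure will instead be exploited in the subsequent analysis of when \eqref{eq:ZY-domain-2} admits a rank-one solution.
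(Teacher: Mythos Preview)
Your proposal is correct and matches the paper's own proof essentially line for line: both use the change of variable $Y = P^{-1}ZP^{-T}$, invoke cyclicity of the trace to turn $\trace[\Abci Z]$ into $\trace[F_i Y]$, and appeal to congruence (Sylvester's law of inertia) to transfer $Z\succeq 0$ to $Y\succeq 0$. Your added remarks about the two cases and the irrelevance of the diagonal structure are accurate but not needed for the lemma itself.
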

\begin{proof}
We consider the change of variable $Y=P^{-1}ZP^{-T}$ and assume there exists some $Z\succeq 0$ satisfying \eqref{eq:ZY-domain-1}.
Then,
\begin{IEEEeqnarray*}{l}
    \trace[(\Abco-\alpha\evec) Z] = \trace[P^{-T}F_0 P^{-1}Z]=\trace[F_0 P^{-1}ZP^{-T}]=\trace[F_0 Y]<0 \\
	\trace[\Abci Z] = \trace[P^{-T}F_i P^{-1}Z]=\trace[F_i P^{-1}ZP^{-T}]=\trace[F_i Y] \unlhd 0,\quad \forall i\in\N.
\end{IEEEeqnarray*}
Because of Sylvester's law of inertia, 
$Y$ is a congruent matrix of $Z$ and we have that $Y\succeq 0$.
This implies that \eqref{eq:ZY-domain-1} and \eqref{eq:ZY-domain-2} are equivalent.
\end{proof}

The next step is to find when there exists a $Y\succeq 0$ that satisfies \eqref{eq:ZY-domain-2} and has rank~1.
This is the most important step to prove strong duality in a QCQP, and will introduce several limitations to when this property can be guaranteed.
We analyze the two cases separately, explained in \cref{sub:Fi-diagonalizable,sub:Fi-not-diagonalizable}, respectively.

\subsection{\texorpdfstring{F\textsubscript{i}}{Fi} simultaneously diagonalizable}
\label{sub:Fi-diagonalizable}
This subsection assumes that $\bli\in\range[\Ali]$ and that we can find $F_i$ diagonal matrix and congruent to $\Abci$ for all $i\in\N$ simulatenously.
We state the following:

\begin{theorem}
Assume $\exists Y\succeq 0$ such that $Y$ satisfies $\trace[F_0 Y]<0$ and a system of constraints given by $\trace[F_i Y]=\psi_i$, where $\psi_i\in\mathbb{R}$ for all $i\in\N$.
A sufficient condition for the previous system to have a rank 1 solution is that there exists a diagonal matrix $D$ with entries $\pm 1$ such that $D F_0 D$  is a $Z$-matrix.
\label{theorem:rank1}
\end{theorem}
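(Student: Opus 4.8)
The plan is to keep the diagonal of the given certificate $Y$ fixed and collapse it to a rank-one matrix by choosing signs. The key observation is that in the diagonalizable case every $F_i$, $i\in\N$, is a diagonal matrix (the off-diagonal block $P_i^T(\Ali p_i+\bli)$ vanishes because $\bli\in\range[\Ali]$ and $p_i=-\Ali^{\dagger}\bli$), so $\trace[F_i Y]=\sum_k (F_i)_{kk}(Y)_{kk}$ depends on $Y$ only through its diagonal. Consequently, any $\widehat Y\succeq 0$ sharing the diagonal of $Y$ automatically satisfies $\trace[F_i\widehat Y]=\psi_i$ for all $i\in\N$, and only the strict inequality $\trace[F_0\widehat Y]<0$ needs to be re-verified.

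First I would set $t_k=(Y)_{kk}\ge 0$. Since $\trace[F_0 Y]<0$ forces $Y\neq 0$, and a positive semidefinite matrix with zero diagonal is zero, some $t_k>0$. Using the hypothesis, fix $D=\diag[s_1,\dots,s_{p+1}]$ with $s_k\in\{-1,1\}$ such that $DF_0D$ is a $Z$-matrix; this is equivalent to $s_ks_l(F_0)_{kl}\le 0$ for all $k\neq l$, and a short case distinction on the sign of $(F_0)_{kl}$ upgrades this to the identity $s_ks_l(F_0)_{kl}=-|(F_0)_{kl}|$. I would then define $\widehat y$ by $\widehat y_k=s_k\sqrt{t_k}$ and set $\widehat Y=\widehat y\,\widehat y^{\,T}$, which is positive semidefinite, of rank exactly one, and has the same diagonal as $Y$; hence $\trace[F_i\widehat Y]=\trace[F_i Y]=\psi_i$ for every $i\in\N$.

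The remaining point is $\trace[F_0\widehat Y]<0$. Computing, $\trace[F_0\widehat Y]=\widehat y^{\,T}F_0\widehat y=\sum_k (F_0)_{kk}t_k+\sum_{k\neq l}s_ks_l(F_0)_{kl}\sqrt{t_kt_l}=\sum_k (F_0)_{kk}t_k-\sum_{k\neq l}|(F_0)_{kl}|\sqrt{t_kt_l}$ by the identity above. On the other hand $Y\succeq 0$ gives $|(Y)_{kl}|\le\sqrt{(Y)_{kk}(Y)_{ll}}=\sqrt{t_kt_l}$, so $\trace[F_0 Y]=\sum_k (F_0)_{kk}t_k+\sum_{k\neq l}(F_0)_{kl}(Y)_{kl}\ge\sum_k (F_0)_{kk}t_k-\sum_{k\neq l}|(F_0)_{kl}|\sqrt{t_kt_l}=\trace[F_0\widehat Y]$. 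Hence $\trace[F_0\widehat Y]\le\trace[F_0 Y]<0$, so $\widehat Y$ is the desired rank-one solution.

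The heart of the argument, and the place where the hypothesis is used in full, is the double role played by the $Z$-matrix condition: it is precisely the consistency condition that allows the signs $s_k$ to be chosen over all pairs simultaneously, and it simultaneously drives the off-diagonal part of $\trace[F_0\widehat Y]$ down to $-\sum_{k\neq l}|(F_0)_{kl}|\sqrt{t_kt_l}$, which the Cauchy--Schwarz-type bound on the off-diagonal entries of the PSD matrix $Y$ shows is no larger than $\trace[F_0 Y]$. I expect no genuine obstacle beyond this; the only care needed is the bookkeeping that $F_i$ is truly diagonal in this case and that the degenerate possibility $Y=0$ is excluded exactly by $\trace[F_0 Y]<0$.
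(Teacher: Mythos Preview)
Your proposal is correct and is essentially the same argument as the paper's: both construct the rank-one certificate $\widehat Y=\widehat y\,\widehat y^{\,T}$ with $\widehat y_k=(D)_{kk}\sqrt{(Y)_{kk}}$, observe that the diagonal $F_i$'s make $\trace[F_i\widehat Y]=\trace[F_iY]$, and then show $\trace[F_0\widehat Y]\le\trace[F_0Y]$ via the $Z$-matrix sign condition together with the Cauchy--Schwarz bound $|(Y)_{kl}|\le\sqrt{(Y)_{kk}(Y)_{ll}}$. The only cosmetic difference is that the paper writes $Y=VV^T$ and states the inequality as a separate proposition in the $v_{rj}$ variables, whereas you invoke the $2\times 2$ principal-minor bound on $Y$ directly; your handling of the degenerate case $Y=0$ is also slightly more explicit.
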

We recall that a $Z$-matrix is a symmetric matrix whose off-diagonal elements are non-positive.
Note also that diagonal matrices are by definition $Z$-matrices.
\begin{proof}
See \cref{appendixA}.
\end{proof}

\Cref{theorem:rank1} requires there exists a matrix $D$ such that $D F_0 D$ is a $Z$--matrix to guarantee existence of a rank 1 solution.
Accordingly, this implies that there exists a vector $y=(y_j)_{j=1}^{p+1}$ and $y_j=(D)_{jj} \sqrt{\sum_{r=1}^R v^2_{rj}}$ such that
\begin{IEEEeqnarray}{l} \IEEEyesnumber\phantomsection \label{eq:existence-sol-rank1}
	y^T F_0 y \leq \trace[F_0 Y] < 0 \IEEEyessubnumber \\
	y^T F_i y = \trace[F_i Y] = \psi_i,\quad \forall i\in\N. \IEEEyessubnumber
\end{IEEEeqnarray}
The rank 1 solution in \eqref{eq:existence-sol-rank1} implies that the system of strong alternatives from \cref{lemma:sdp-strong-alternatives} is satisfied because of the relation established in \cref{lemma:equivalence-between-Z-and-Y}.
We can now combine these results with the \sproperty\ and show that \eqref{eq:existence-sol-rank1} is equivalent to \cref{subeq:rank1-sproperty-1}.
We complete the argument with the following two lemmas: 
\begin{lemma}
The following systems are equivalent:
\begin{IEEEeqnarray}{l} \IEEEyesnumber \phantomsection \label{eq:vector-to-equation}
	\exists y\in\mathbb{R}^{p+1}\,|\, y^T F_0 y < 0\,\wedge\,y^T F_i y\leq 0\quad\forall i\in\N,\quad \Longleftrightarrow 
	\IEEEyessubnumber \label{eq:vector-to-equation-1} \\
	\exists \x\in\mathbb{R}^p\,|\,f(\x) < \alpha\,\wedge\,g_i(\x)\leq 0,\,\forall i\in\N . \IEEEyessubnumber \label{eq:vector-to-equation-2}
\end{IEEEeqnarray}
\label{lemma:from-sdp-to-QPl}
\end{lemma}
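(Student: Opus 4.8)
The plan is to use the homogenisation built into $P$. For $\x\in\mathbb{R}^{p}$ put $z=(\x,1)^{T}\in\mathbb{R}^{p+1}$; a one-line computation with \eqref{eq:Abco-Abci} gives $z^{T}(\Abco-\alpha\evec)z=f(\x)-\alpha$ and $z^{T}\Abci z=g_i(\x)$. Since $F_0=P^{T}(\Abco-\alpha\evec)P$ and $F_i=P^{T}\Abci P$ with $P$ invertible, the substitution $y=P^{-1}z$ turns these into $y^{T}F_0y=f(\x)-\alpha$ and $y^{T}F_iy=g_i(\x)$. This already yields \eqref{eq:vector-to-equation-2}$\Rightarrow$\eqref{eq:vector-to-equation-1}: from a feasible $\x$ with $f(\x)<\alpha$ take $y=P^{-1}(\x,1)^{T}$.

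For the converse, given $y$ satisfying \eqref{eq:vector-to-equation-1} set $z=Py$; since the last row of $P$ in \eqref{eq:p-matrix} is $(0,\dots,0,1)$, the last coordinate of $z$ equals $y_{p+1}$. If $y_{p+1}\neq0$, then $z=y_{p+1}(\x,1)^{T}$ for $\x$ the first $p$ entries of $z/y_{p+1}$, and homogeneity of the quadratic forms gives $f(\x)-\alpha=y_{p+1}^{-2}\,y^{T}F_0y<0$ and $g_i(\x)=y_{p+1}^{-2}\,y^{T}F_iy\le0$, so \eqref{eq:vector-to-equation-2} holds.

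I expect the case $y_{p+1}=0$ to be the crux. Here $z=(x_0,0)$ with $x_0\neq0$, and evaluating the identities above at a vanishing last coordinate gives $x_0^{T}\Ao x_0=y^{T}F_0y<0$ together with $(x_0)_i^{T}\Ali(x_0)_i=y^{T}F_iy\le0$ for every $i$, where $(x_0)_i$ is the $i$th block of $x_0$. The idea is to travel to infinity along $x_0$ from a suitably chosen base point: set $\x(t)=w+t\,x_0$ with a constant $w=(w_i)_{i\in\N}$ built block by block. For any fixed $w$, $f(\x(t))=t^{2}(x_0^{T}\Ao x_0)+O(t)\to-\infty$, so $f(\x(t))<\alpha$ once $t$ is large; the work is to keep every constraint satisfied along the ray. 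Using $g_i(\x(t))=g_i(w_i)+2t\big((x_0)_i^{T}\Ali w_i+\bli^{T}(x_0)_i\big)+t^{2}(x_0)_i^{T}\Ali(x_0)_i$ together with $(x_0)_i^{T}\Ali(x_0)_i\le0$: if $\Ali(x_0)_i\neq0$, choose $w_i$ with $(x_0)_i^{T}\Ali w_i+\bli^{T}(x_0)_i<0$ — possible since this affine function of $w_i$ has nonzero gradient $\Ali(x_0)_i$ — so that $g_i(\x(t))\to-\infty$ (its $t^{2}$-coefficient is $\le0$, its $t$-coefficient $<0$); if $\Ali(x_0)_i=0$, the subsection hypothesis $\bli\in\range[\Ali]$ forces $\bli^{T}(x_0)_i=0$ as well, hence $g_i(\x(t))\equiv g_i(w_i)$, and we set $w_i=\hat x_i$ from \cref{assumption:slater} so that $g_i(w_i)<0$. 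Choosing $t$ large enough to push $f(\x(t))$ below $\alpha$ and every ``$\Ali(x_0)_i\neq0$'' constraint below $0$ produces an $\x=\x(t)$ witnessing \eqref{eq:vector-to-equation-2}.

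The delicate point — and where both the separability of the constraints and the hypothesis $\bli\in\range[\Ali]$ are indispensable — is exactly the blocks with $(x_0)_i^{T}\Ali(x_0)_i=0$: separability lets us tune $w_i$ to the $i$th constraint without disturbing the others, while $\bli\in\range[\Ali]$ kills the linear term of $g_i$ along $x_0$ that could otherwise grow without bound. Absent these, the ``direction at infinity'' $x_0$ need not lift to an honest feasible point, and the stated equivalence can fail.
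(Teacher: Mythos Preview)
Your argument is correct and follows the same overall strategy as the paper: the easy direction via the homogenisation $z=(\x,1)^{T}=Py$, and the hard direction via a ray-to-infinity construction anchored at a Slater point when the last coordinate vanishes. The difference is in bookkeeping rather than substance. The paper carries out the $y_{p+1}=0$ case in the \emph{congruent} domain, where the diagonalised constraints read $\s^{T}\Fi\s+\ei$; it takes $\s=\sum_i t_{si}\hat s_i+t_v v$ with $t_{si}\in\{\pm1\}$ chosen to make each cross term $2t_v t_{si}\hat s_i^{T}\Fi v$ nonpositive, and lets $t_v\to\infty$. You instead stay in the original $\x$-domain, split on whether $\Ali(x_0)_i$ vanishes, and invoke $\bli\in\range[\Ali]$ directly to kill the linear drift in the degenerate blocks. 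Both routes exploit separability to tune blocks independently; the paper hides the range hypothesis inside the diagonal form of $F_i$, while you surface it explicitly. Your version is marginally more self-contained in that it never needs the intermediate $\s$-system, at the cost of the extra case analysis.
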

\begin{proof}
See \cref{appendixA}.
\end{proof}

Problem \ref{QPe} requires an alternative lemma:
\begin{lemma}
The following systems are equivalent if $\Ali\neq 0\quad \forall i\in\N$:
\begin{IEEEeqnarray}{l} \IEEEyesnumber
	\exists y\in\mathbb{R}^{p+1}\,|\, y^T F_0 y < 0\,\wedge\,y^T F_i y = 0\quad\forall i\in\N, \quad \Longleftrightarrow 
	\IEEEyessubnumber \label{eq:vector-to-equation-3}\\
	\exists \x\in\mathbb{R}^p\,|\,f(\x) < \alpha\,\wedge\,g_i(\x) = 0,\,\forall i\in\N. 
	\IEEEyessubnumber \label{eq:vector-to-equation-4}
\end{IEEEeqnarray}
\label{lemma:from-sdp-to-QPe}%
\end{lemma}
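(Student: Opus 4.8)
Here is the plan. I would prove the two implications of the stated equivalence separately; the reverse implication \eqref{eq:vector-to-equation-4}$\Rightarrow$\eqref{eq:vector-to-equation-3} is the easy (homogenization) direction, while \eqref{eq:vector-to-equation-3}$\Rightarrow$\eqref{eq:vector-to-equation-4} needs work. For \eqref{eq:vector-to-equation-4}$\Rightarrow$\eqref{eq:vector-to-equation-3}: given $\x$ with $f(\x)<\alpha$ and $g_i(\x)=0$ for all $i$, set $y:=P^{-1}(\x^T,1)^T$ so that $Py=(\x^T,1)^T$. Since $F_i=P^T\Abci P$ and $F_0=P^T(\Abco-\alpha\evec)P$, one reads off $y^TF_iy=(\x^T,1)\Abci(\x^T,1)^T=g_i(\x)=0$ and $y^TF_0y=(\x^T,1)(\Abco-\alpha\evec)(\x^T,1)^T=f(\x)-\alpha<0$. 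This uses neither $\Ali\neq 0$ nor Slater's condition.

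For the converse, take $y$ with $y^TF_0y<0$ and $y^TF_iy=0$ for all $i$, and write $(z^T,t)^T:=Py$ with $z\in\mathbb{R}^p$, $t\in\mathbb{R}$. Expanding as above and using the block structure of $\Ai$ and $\bi$, one gets $y^TF_0y=z^T\Ao z+2t\bo^Tz+(\co-\alpha)t^2$ and $y^TF_iy=z_i^T\Ali z_i+2t\bli^Tz_i+\cli t^2$, where $z_i$ is the $i$th block of $z$. If $t\neq 0$, then $\x:=z/t$ satisfies $g_i(\x)=t^{-2}\,y^TF_iy=0$ and $f(\x)-\alpha=t^{-2}\,y^TF_0y<0$, so \eqref{eq:vector-to-equation-4} holds and we are done. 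The only remaining (and genuinely delicate) case is the degenerate one $t=0$, i.e.\ $z^T\Ao z<0$ and $z_i^T\Ali z_i=0$ for all $i$, in which $y$ does not directly project to a point of $\mathbb{R}^p$.

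To handle $t=0$ I would exploit the diagonalized form available in this subsection ($\bli\in\range[\Ali]$): each $F_i$ is diagonal, its entries in the positions of block $i$ lie in $\{-1,0,+1\}$, say $\mathrm{diag}(\sigma_{i1},\dots,\sigma_{in_i})$ with $\sigma_{ij}\neq 0$ exactly for the nonzero eigenvalues of $\Ali$, its $(p+1,p+1)$ entry equals $\rho_i:=g_i(-\Ali^{\dagger}\bli)$, and all other entries vanish; moreover $g_i(P_i\xi_i+p_i)=\sum_{j}\sigma_{ij}\xi_{ij}^2+\rho_i$ as $\xi_i$ ranges over $\mathbb{R}^{n_i}$. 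The idea is to perturb $y$ (which has $y_{p+1}=t=0$) into a nearby vector $y(\epsilon)$ with $y(\epsilon)_{p+1}=\epsilon\neq 0$ while preserving $y(\epsilon)^TF_iy(\epsilon)=0$ for every $i$, and then apply the already-settled $t\neq 0$ case to $y(\epsilon)$ for $\epsilon$ small. Since $y_{p+1}=0$, the constraint $y^TF_iy=0$ reads $\sum_{j\in\text{block }i}\sigma_{ij}y_j^2=0$, so for each block with $\rho_i\neq 0$ it suffices to change a \emph{single} coordinate $y_{j_i^\ast}$ of block $i$, with $\sigma_{ij_i^\ast}=-\sign(\rho_i)$, to any real value with $y(\epsilon)_{j_i^\ast}^2=y_{j_i^\ast}^2+|\rho_i|\epsilon^2$ (the root closest to $y_{j_i^\ast}$), leaving blocks with $\rho_i=0$ untouched. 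Then $\sum_{j\in\text{block }i}\sigma_{ij}y(\epsilon)_j^2=-\rho_i\epsilon^2$, hence $y(\epsilon)^TF_iy(\epsilon)=-\rho_i\epsilon^2+\rho_i\epsilon^2=0$ for all $i$, and $y(\epsilon)\to y$ as $\epsilon\downarrow 0$; by continuity of $y'\mapsto y'^TF_0y'$ we keep $y(\epsilon)^TF_0y(\epsilon)<0$ for small $\epsilon$, and since $y(\epsilon)_{p+1}=\epsilon\neq 0$ the $t\neq 0$ argument produces the required $\x$.

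The step I expect to be the main obstacle — and the only place where the hypothesis $\Ali\neq 0$ (which, incidentally, is itself forced by Slater's condition) and, crucially, Slater's condition \cref{assumption:slater} for \ref{QPe} are used — is checking that for each block with $\rho_i\neq 0$ a coordinate $j_i^\ast$ with $\sigma_{ij_i^\ast}=-\sign(\rho_i)$ actually exists. If a block with $\rho_i<0$ had no $\sigma_{ij}=+1$, then $g_i(P_i\xi_i+p_i)=\sum_j\sigma_{ij}\xi_{ij}^2+\rho_i\le\rho_i<0$ for all $\xi_i$, i.e.\ $g_i<0$ everywhere on $\mathbb{R}^{n_i}$, contradicting the existence of $\hat x_i^2$ with $g_i(\hat x_i^2)>0$; symmetrically, a block with $\rho_i>0$ and no $\sigma_{ij}=-1$ would force $g_i\ge\rho_i>0$ everywhere, contradicting the existence of $\hat x_i^1$ with $g_i(\hat x_i^1)<0$. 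Hence the needed coordinate is always available, the perturbation is legitimate, and combining the two implications yields the equivalence.
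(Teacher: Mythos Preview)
Your proof is correct; the easy direction and the nondegenerate case $t\neq 0$ coincide with the paper's argument. For the degenerate case $t=0$ (the paper's $w=0$) you take a genuinely different route. The paper does not perturb $y$: it constructs $\s\in\mathbb{R}^p$ directly as $\s=\sum_i t_{si}\hat s_i + t_v v$, where $v$ is the first $p$ entries of $y$ and the $\hat s_i$ are block-supported Slater points, takes $t_v$ large so that the dominant term $t_v^2\,v^T\Fo v<0$ drives the objective negative, and then solves each equality $\s^T\Fi\s+\ei=0$ for $t_{si}$ via a case split according to whether $v\in\nullspace[\Fi]$; this is precisely where the hypothesis $\Ali\neq 0$ (equivalently $\Fi\neq 0$) is invoked, to ensure one can pick $\hat s_i$ with $\hat s_i^T\Fi\hat s_i\neq 0$. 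Your perturbation $y\mapsto y(\epsilon)$, moving only $y_{p+1}$ and a single diagonal coordinate per block, is more elementary and makes the role of the two-sided Slater condition especially transparent, at the cost of leaning harder on the explicit diagonal form of the $F_i$ available in this subsection; the paper's construction uses only the block-separable structure (not diagonality within a block), so it transfers a bit more readily to less diagonal situations, but at the price of the extra case analysis and the quadratic in $t_{si}$. Your side remark that $\Ali\neq 0$ is already forced by $\bli\in\range[\Ali]$ together with \cref{assumption:slater} is correct and is not noted in the paper.
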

\begin{proof}
See \cref{appendixA}.
\end{proof}

We now state our main result.
\begin{theorem}
Given a QCQP in the form of \ref{QPl} or \ref{QPe}, suppose \cref{assumption:slater} is satisfied and that $\bli\in\range[\Ali]$ for every $i\in\N$.
Furthermore, assume that there exists a diagonal matrix $D$ whose elements are $\pm 1$ such that $D F_0 D$ is a $Z$--matrix.
Then, the \sproperty\ holds.
\label{theorem:sproperty-result}
\end{theorem}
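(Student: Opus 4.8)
The statement is essentially the conjunction of the chain of equivalences already assembled in the excerpt, so the proof should just be a matter of wiring them together in the right order. First I would invoke \Cref{lemma:strong-alternatives}: to establish the \sproperty\ it suffices to show that systems \eqref{subeq:rank1-sproperty-1} and \eqref{eq:sa-21} are strong alternatives for every $\alpha\in\mathbb{R}$. Next, under \cref{assumption:slater}, \cref{lemma:sdp-strong-alternatives} tells us that \eqref{subeq:sdp-strong-alternatives1} and \eqref{eq:sa-21} are already strong alternatives. Hence it is enough to prove that the scalar system \eqref{subeq:rank1-sproperty-1} is \emph{equivalent} to the SDP feasibility system \eqref{subeq:sdp-strong-alternatives1}.

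One direction is trivial: any feasible $\x$ for \eqref{subeq:rank1-sproperty-1} yields the rank-one matrix $Z=(\x^T,1)^T(\x^T,1)$, which is feasible for \eqref{subeq:sdp-strong-alternatives1}. The substance is the converse. Here I would chain the remaining lemmas. Assume \eqref{subeq:sdp-strong-alternatives1} holds, i.e.\ there is some $Z\succeq 0$ with $\trace[(\Abco-\alpha\evec)Z]<0$ and $\trace[\Abci Z]\unlhd 0$. Since $\bli\in\range[\Ali]$ for all $i$, the construction preceding \cref{lemma:equivalence-between-Z-and-Y} gives an invertible $P$ (with $P_i=Q_i\Delta_i^{\dagger/2}$, $p_i=-\Ali^{\dagger}\bli$) making all $F_i=P^T\Abci P$ diagonal. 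By \cref{lemma:equivalence-between-Z-and-Y} the matrix $Y=P^{-1}ZP^{-T}\succeq 0$ satisfies $\trace[F_0 Y]<0$ and $\trace[F_i Y]\unlhd 0$. Writing $\psi_i=\trace[F_i Y]$, \cref{theorem:rank1} applies: because there is a diagonal $\pm1$ matrix $D$ with $DF_0D$ a $Z$-matrix, the system $\trace[F_0 Y]<0$, $\trace[F_i Y]=\psi_i$ admits a \emph{rank-one} solution, i.e.\ there is $y\in\mathbb{R}^{p+1}$ with $y^T F_0 y<0$ and $y^T F_i y=\psi_i$ for all $i$. In the inequality case $\psi_i\le 0$, so \cref{lemma:from-sdp-to-QPl} converts this $y$ into an $\x\in\mathbb{R}^p$ with $f(\x)<\alpha$ and $g_i(\x)\le 0$ — exactly \eqref{subeq:rank1-sproperty-1} for \ref{QPl}. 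In the equality case one has $\psi_i\unlhd 0$ read as $\psi_i\le 0$ coming from $\trace[\Abci Z]=0$ for the primal equality, so in fact $\psi_i=0$, and since $\Ali\neq 0$ for all $i$ (this is implied by $\bli\in\range[\Ali]$ together with \ref{assumption:slater-B}, which forces $g_i$ to take both signs and hence $\Ali\neq 0$), \cref{lemma:from-sdp-to-QPe} produces an $\x$ with $f(\x)<\alpha$ and $g_i(\x)=0$, giving \eqref{subeq:rank1-sproperty-1} for \ref{QPe}.

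Having shown \eqref{subeq:rank1-sproperty-1} $\Leftrightarrow$ \eqref{subeq:sdp-strong-alternatives1}, and knowing \eqref{subeq:sdp-strong-alternatives1} and \eqref{eq:sa-21} are strong alternatives, it follows that \eqref{subeq:rank1-sproperty-1} and \eqref{eq:sa-21} are strong alternatives; by \cref{lemma:strong-alternatives} the QCQP has the \sproperty, for every $\alpha$, which is what we wanted.

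\textbf{Main obstacle.} The delicate point is the bookkeeping between the equality and inequality settings — in particular making sure the constant $\psi_i$ fed into \cref{theorem:rank1} carries the correct sign so that \cref{lemma:from-sdp-to-QPl} (for $\unlhd\,=\,\le$) or \cref{lemma:from-sdp-to-QPe} (for $\unlhd\,=\,=$) can be applied, and verifying that the hypothesis $\Ali\neq 0$ required by \cref{lemma:from-sdp-to-QPe} is actually available. Everything else is a mechanical composition of results already in hand; the $\pm1$ diagonal conjugation hypothesis on $F_0$ enters only through \cref{theorem:rank1}, so no further work with it is needed here.
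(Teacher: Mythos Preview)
Your proposal is correct and follows essentially the same route as the paper: the proof there is precisely the chain \cref{lemma:sdp-strong-alternatives} $\to$ \cref{lemma:equivalence-between-Z-and-Y} $\to$ \cref{theorem:rank1} $\to$ \cref{lemma:from-sdp-to-QPl,lemma:from-sdp-to-QPe} $\to$ \cref{lemma:strong-alternatives}, stated without the intermediate bookkeeping you spell out. If anything you are more careful than the paper, since you explicitly check that $\Ali\neq 0$ (needed for \cref{lemma:from-sdp-to-QPe}) follows from $\bli\in\range[\Ali]$ together with \cref{assumption:slater}\ref{assumption:slater-B}, whereas the paper silently invokes that lemma.
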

An immediate consequence of \cref{theorem:sproperty-result} is that we now have sufficient conditions to apply \cref{th:jeyakumar} and \cref{proposition:strong-duality}.

\begin{proof}
We follow the steps depicted in \cref{fig:block-diagram}.
\Cref{lemma:sdp-strong-alternatives} introduces a system of strong alternatives between semidefinite programs.
\Cref{lemma:equivalence-between-Z-and-Y} reformulates one of the systems as an equivalent problem with diagonal matrices.
\Cref{theorem:rank1} introduces a sufficient condition for the reformulated system to have a solution of rank 1.
\Cref{lemma:from-sdp-to-QPl,lemma:from-sdp-to-QPe} show that if there exists a rank 1 solution in the system of strong alternatives, then the \sproperty\ defined in \cref{lemma:strong-alternatives} is satisfied.
\end{proof}

\begin{remark}
We note that if $D F_0 D$ becomes diagonal, then the QCQP can be formulated as an equivalent linear program, as presented in \cite{Jacobson1976}, and strong duality holds.
\end{remark}

\subsection{\texorpdfstring{F\textsubscript{i}}{Fi} not simultaneously diagonalizable}
\label{sub:Fi-not-diagonalizable}
In this subsection we assume that for some $i\in\N$, $\bli\notin\range[\Ali]$.
Denote with $M$ the set of indexes such that for $m\in M$ we have $(F_i)_{p+1,m} = (F_i)_{m,p+1} \neq 0$ for any $i\in\N$.
Furthermore, we necessarily have that $(F_i)_{mm} = 0$ for $m\in M$.
We can invoke the following theorem for this case:

\begin{theorem}
Suppose $\exists Y\succeq 0$ such that $Y$ satisfies $\trace[F_0 Y]<0$ and $\trace[F_i Y]=\psi_i$, where $\psi_i\in\mathbb{R}$ for all $i\in\N$.
A sufficient condition for the previous system to have a rank~1 solution is that the diagonal elements of $\Ao$ are nonnegative, $(\Ao)_{mj}=(\Ao)_{jm}=0$ for any $m\neq j$ ($m\in M$), and there exists a diagonal matrix $D$ whose terms satisfy
	\begin{equation}
		(D)_{ii} (D)_{jj} (F_0)_{ij} \leq 0\qquad \forall i,j\notin M.
	\end{equation}
\label{th:2ndcase}
\end{theorem}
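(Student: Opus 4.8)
The plan is to imitate the proof of \cref{theorem:rank1}: starting from a feasible $Y\succeq 0$, build an explicit rank-one matrix $\widehat Y=yy^T\succeq 0$ that reproduces all the constraint values, but now with extra care for the indices $m\in M$ (where $F_i$ carries a single off-diagonal entry, in the last row/column, and $(F_i)_{mm}=0$) and for the homogenizing index $p+1$. First I would fix $Y\succeq 0$ with $\trace[F_0Y]<0$ and $\trace[F_iY]=\psi_i$, and write a rank decomposition $Y=\sum_{r=1}^{R}v_rv_r^T$, so that $(Y)_{ab}=\sum_r v_{ra}v_{rb}$. I would also record that, since $M\subseteq\{1,\dots,p\}$, the two hypotheses on $\Ao$ — nonnegative diagonal and $(\Ao)_{mj}=(\Ao)_{jm}=0$ for $m\in M$, $j\neq m$ — translate, through the block structure of the congruence $P$, into $(F_0)_{mm}\ge 0$ and $(F_0)_{mj}=0$ for every $m\in M$ and every $j\in\{1,\dots,p\}\setminus\{m\}$, while $(F_0)_{m,p+1}$ is left unconstrained.

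Then I would set $y=(y_a)_{a=1}^{p+1}$ with $y_a=(D)_{aa}\sqrt{(Y)_{aa}}$ for $a\notin M$ (taking the diagonal entries of $D$ to be $\pm1$, so that the hypothesis reads: $DF_0D$ restricted to the indices outside $M$ is a $Z$-matrix; in particular $y_{p+1}=(D)_{p+1,p+1}\sqrt{(Y)_{p+1,p+1}}$), and $y_m=(Y)_{m,p+1}/y_{p+1}$ for $m\in M$ when $y_{p+1}\neq 0$, setting $y_m=0$ otherwise. This forces $y_a^2=(Y)_{aa}$ for $a\notin M$ and $y_my_{p+1}=(Y)_{m,p+1}$ for $m\in M$, and since $(F_i)_{mm}=0$ the only entries of $F_i$ that enter $y^TF_iy$ are $(F_i)_{aa}$ with $a\notin M$, $(F_i)_{m,p+1}$ and $(F_i)_{p+1,p+1}$; substituting the two identities gives $y^TF_iy=\trace[F_iY]=\psi_i$ for every $i\in\N$ (the constraints $F_j$, $j\neq i$, see only coordinates of other blocks by separability, so nothing is missed, and the degenerate case $(Y)_{p+1,p+1}=0$ makes all $M$/$(p{+}1)$ entries of $Y$ vanish, so the same identity holds).

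The remaining point is $y^TF_0y<0$, which I would obtain from the sharper $y^TF_0y\le\trace[F_0Y]$. Expanding and using the sparsity of $F_0$ at $M$, $y^TF_0y$ splits into a sum over $a,b\notin M$, a cross sum $2\sum_{m\in M}(F_0)_{m,p+1}y_my_{p+1}$, and $\sum_{m\in M}(F_0)_{mm}y_m^2$, with the analogous expansion of $\trace[F_0Y]$ obtained by replacing the products by the corresponding entries of $Y$. The cross sums agree because $y_my_{p+1}=(Y)_{m,p+1}$; the sum over $a,b\notin M$ is bounded by its $Y$-counterpart by exactly the Cauchy--Schwarz-plus-$Z$-matrix argument of \cref{theorem:rank1} (the bound $\sqrt{(Y)_{aa}(Y)_{bb}}\ge|(Y)_{ab}|$ combined with $(D)_{aa}(D)_{bb}(F_0)_{ab}\le 0$ for $a\neq b$, the diagonal terms matching exactly); and finally, when $y_{p+1}\neq 0$, positive semidefiniteness of the $2\times2$ principal submatrix of $Y$ on $\{m,p+1\}$ gives $y_m^2=(Y)_{m,p+1}^2/(Y)_{p+1,p+1}\le(Y)_{mm}$, so $(F_0)_{mm}\ge0$ yields $(F_0)_{mm}y_m^2\le(F_0)_{mm}(Y)_{mm}$ (and trivially $(F_0)_{mm}y_m^2=0\le(F_0)_{mm}(Y)_{mm}$ in the degenerate case). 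Adding the three bounds gives $y^TF_0y\le\trace[F_0Y]<0$, so $\widehat Y=yy^T$ is the desired rank-one solution; combined with \cref{lemma:equivalence-between-Z-and-Y} and the chain of \cref{fig:block-diagram} this is what feeds into the \sproperty.

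I expect the crux to be the coupled treatment of the indices $m\in M$ and the homogenizing index $p+1$: the definition $y_m=(Y)_{m,p+1}/y_{p+1}$ is forced in order to reproduce the cross terms of the constraints exactly, and the real work is showing that the induced value of $y_m^2$ does not break $y^TF_0y\le\trace[F_0Y]$. This is precisely what the three hypotheses buy — $(F_i)_{mm}=0$ deletes $y_m^2$ from the constraints, the decoupling $(\Ao)_{mj}=0$ deletes every $F_0$-coupling of $m$ except to $p+1$, and the nonnegative diagonal of $\Ao$ together with the $2\times2$ positive-semidefinite minor of $Y$ dominates $(F_0)_{mm}y_m^2$. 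The secondary hurdles are the degenerate case $(Y)_{p+1,p+1}=0$ and verifying carefully that the specific congruence $P$ used in the non-diagonalizable case really transports the stated conditions on $\Ao$ into the claimed sparsity and sign structure of $F_0$.
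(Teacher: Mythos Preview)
Your proposal is correct and follows essentially the same argument as the paper: the same rank-one candidate $y$ (with $y_a=(D)_{aa}\sqrt{(Y)_{aa}}$ for $a\notin M$ and $y_m=(Y)_{m,p+1}/y_{p+1}$ for $m\in M$), the same verification that $y^TF_iy=\trace[F_iY]$, and the same three-part term-by-term comparison to get $y^TF_0y\le\trace[F_0Y]$. Your explicit handling of the degenerate case $(Y)_{p+1,p+1}=0$ and your flag that the hypotheses on $\Ao$ must be transported to $F_0$ through $P$ are points the paper leaves implicit.
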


\Cref{th:2ndcase} is more demanding than \cref{theorem:rank1} because of the lack of simultaneous diagonalization of the constraints.
Nevertheless, both theorems state unique results regarding the existence of rank~1 solutions in SDP programs.
The condition that $(\Ao)_{mj}=(\Ao)_{jm}=0$ for any $m\neq j$ ($m\in M$) means that the column/row associated with element $(\Ao)_{mm}$ are zero except for term $(\Ao)_{mm}$.

\begin{proof}
See \cref{appendixA}.
\end{proof}

The final step is to relate the existence of $y\in\mathbb{R}^{p+1}$ such that 
\begin{IEEEeqnarray}{rCl'l}
	y^T F_0 y & \leq & \trace[F_0 Y] < 0 \\
	y^T F_i y & = & \trace[F_i Y] \unlhd 0 & \forall i\in\N,
\end{IEEEeqnarray}
to the \sproperty\ reformulation described in \cref{lemma:strong-alternatives}.
We can state the following:
\begin{lemma}
	Assume that $\Ao\succeq 0$. Then, the following systems are equivalent:
	\begin{IEEEeqnarray}{l} \IEEEyesnumber
		\exists y\in\mathbb{R}^{p+1}\,|\, y^T F_0 y < 0\,\wedge\,y^T F_i y \unlhd 0\quad\forall i\in\N, \quad \Longleftrightarrow 
		\IEEEyessubnumber \label{eq:vector-to-equation-5}\\
		\exists \x\in\mathbb{R}^p\,|\,f(\x) < \alpha\,\wedge\,g_i(\x) \unlhd 0,\,\forall i\in\N. 
		\IEEEyessubnumber \label{eq:vector-to-equation-6}
	\end{IEEEeqnarray}%
\label{lemma:case2-back-to-sproperty}%
\end{lemma}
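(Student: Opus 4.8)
The plan is to move both systems through the invertible congruence $P$ of \eqref{eq:p-matrix} and then dehomogenise in the last coordinate, the standard device for relating a quadratic form on $\mathbb{R}^{p+1}$ to an affine quadratic on $\mathbb{R}^p$. Since $P$ is nonsingular, $z = Py$ is a bijection of $\mathbb{R}^{p+1}$, and because $F_0 = P^T(\Abco-\alpha\evec)P$ and $F_i = P^T\Abci P$ we have $y^T F_0 y = z^T(\Abco-\alpha\evec)z$ and $y^T F_i y = z^T\Abci z$ for every $y$. Hence \eqref{eq:vector-to-equation-5} is equivalent to the existence of $z\in\mathbb{R}^{p+1}$ with $z^T(\Abco-\alpha\evec)z<0$ and $z^T\Abci z\unlhd 0$ for all $i\in\N$.

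Next I would split $z=(w^T,t)^T$ with $w\in\mathbb{R}^p$ and $t\in\mathbb{R}$; expanding by \eqref{eq:Abco-Abci} gives
\begin{IEEEeqnarray*}{rCl}
 z^T(\Abco-\alpha\evec)z &=& w^T\Ao w + 2t\,\bo^T w + (\co-\alpha)t^2, \\
 z^T\Abci z &=& w^T\Ai w + 2t\,\bi^T w + \cli t^2.
\end{IEEEeqnarray*}
Here the hypothesis $\Ao\succeq 0$ enters, and it is exactly what excludes the degenerate ``point at infinity'': if $t=0$ then $z^T(\Abco-\alpha\evec)z = w^T\Ao w\ge 0$, contradicting $z^T(\Abco-\alpha\evec)z<0$; therefore every admissible $z$ has $t\neq 0$. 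Putting $\x = w/t\in\mathbb{R}^p$, the two displayed expressions become $t^2\big(f(\x)-\alpha\big)$ and $t^2 g_i(\x)$, and dividing by $t^2>0$ yields $f(\x)<\alpha$ and $g_i(\x)\unlhd 0$ for all $i\in\N$; recalling that $g_i(\x)=g_i(x_i)$ involves only the block $x_i$, this is precisely \eqref{eq:vector-to-equation-6} for \ref{QPl} (when $\unlhd$ is $\le$) or \ref{QPe} (when $\unlhd$ is $=$).

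For the reverse implication I would simply take, for a point $\x$ satisfying \eqref{eq:vector-to-equation-6}, the vector $z=(\x^T,1)^T$, so that $z^T(\Abco-\alpha\evec)z=f(\x)-\alpha<0$ and $z^T\Abci z=g_i(\x)\unlhd 0$, and then $y=P^{-1}z$ satisfies \eqref{eq:vector-to-equation-5}; this direction uses only the invertibility of $P$ and not the positivity of $\Ao$. The only delicate point, and the reason this lemma assumes $\Ao\succeq 0$ rather than merely \cref{assumption:slater} as in \cref{lemma:from-sdp-to-QPl}, is the exclusion of $t=0$: in the simultaneously diagonalizable case one can afford to perturb the direction $w$ using Slater's condition, but in the present ``not simultaneously diagonalizable'' setting the clean sufficient hypothesis for ruling out a vanishing last coordinate is positive semidefiniteness of $\Ao$. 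Accordingly I expect the $t=0$ case to be the crux and the remainder to be routine manipulation of the congruence $P$ together with \eqref{eq:Abco-Abci}.
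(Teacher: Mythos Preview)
Your proposal is correct and matches the paper's proof in substance: both directions rely on the congruence $z=Py$ (which preserves the last coordinate because the last row of $P$ is $(0,\ldots,0,1)$), and the crux is exactly that $\Ao\succeq 0$ forces the last coordinate to be nonzero, after which dehomogenising by $t^2$ is routine. The only cosmetic difference is that the paper splits $y=(v^T,w)^T$ and argues $v^T\Fo v\ge 0$ on the $F$-side, whereas you first pass to $z=Py$ and argue $w^T\Ao w\ge 0$ on the $\Abco$-side; since $\Fo=\tilde P^T\Ao\tilde P$ these are the same inequality.
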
%
\vspace{-3em}
\begin{proof}%
See \cref{appendixA}.
\end{proof}
The previous lemma directly relates the existence of rank~1 solution in the system of LMIs with the \sproperty.
This allows us to summarize this result in the following theorem.

\begin{theorem}
	Given a QCQP in the form of \ref{QPl} or \ref{QPe}, suppose \cref{assumption:slater} is satisfied and that for some $i\in\N$, $\bli\notin\range[\Ali]$.
	Assume that $\Ao\succeq 0$, $(\Ao)_{mj}=(\Ao)_{jm}=0$ for any $m\neq j$, $m\in M$, and that there exists a diagonal matrix $D$ whose terms satisfy
	\begin{equation}
		(D)_{ii} D_{jj} (F_0)_{ij} \leq 0\qquad \forall i,j\notin M.
	\end{equation}
	Then, the \sproperty\ holds.
\label{theorem:sproperty-result2}
\end{theorem}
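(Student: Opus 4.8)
The plan is to run the chain of equivalences of \cref{fig:block-diagram} in exactly the way it was done for \cref{theorem:sproperty-result}, but in the ``almost diagonal'' regime: replace \cref{theorem:rank1} by \cref{th:2ndcase}, and replace \cref{lemma:from-sdp-to-QPl,lemma:from-sdp-to-QPe} by \cref{lemma:case2-back-to-sproperty}. By \cref{lemma:strong-alternatives} it suffices to show that, for every $\alpha\in\mathbb{R}$, the systems \eqref{subeq:rank1-sproperty-1} and \eqref{eq:sa-21} are strong alternatives, i.e.\ that \eqref{subeq:rank1-sproperty-1} is feasible if and only if \eqref{eq:sa-21} holds.

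For the non-trivial direction, fix $\alpha$ and assume \eqref{eq:sa-21}, so that no $\blam\in\Gamma$ makes $\Abco+\sum_{i\in\N}\lambda_i\Abci-\alpha\evec\succeq 0$. Since \cref{assumption:slater} holds, \cref{lemma:sdp-strong-alternatives} provides $Z\succeq 0$ with $\trace[(\Abco-\alpha\evec)Z]<0$ and $\trace[\Abci Z]\unlhd 0$ for all $i\in\N$. Taking $Y=P^{-1}ZP^{-T}$ with $P$ the congruence matrix \eqref{eq:p-matrix}, \cref{lemma:equivalence-between-Z-and-Y} gives $Y\succeq 0$ with $\trace[F_0 Y]<0$ and $\psi_i:=\trace[F_i Y]\unlhd 0$, where the $F_i$ are the near-diagonal congruent matrices of \cref{sub:Fi-not-diagonalizable} (the relevant case here, since $\bli\notin\range[\Ali]$ for some $i$). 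I would then apply \cref{th:2ndcase}: its hypotheses are met because $\Ao\succeq 0$ forces the diagonal entries of $\Ao$ to be nonnegative, while the off-diagonal vanishing of the rows and columns indexed by $M$ and the existence of the sign-pattern matrix $D$ with $(D)_{ii}(D)_{jj}(F_0)_{ij}\le 0$ for $i,j\notin M$ are assumed outright. This yields $y\in\mathbb{R}^{p+1}$ with $y^T F_0 y\le\trace[F_0 Y]<0$ and $y^T F_i y=\psi_i\unlhd 0$ for all $i$. Finally, since $\Ao\succeq 0$, \cref{lemma:case2-back-to-sproperty} turns $y$ into $\x\in\mathbb{R}^p$ with $f(\x)<\alpha$ and $g_i(\x)\unlhd 0$ for all $i$, which is exactly \eqref{subeq:rank1-sproperty-1}.

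The reverse direction is the easy one: if \eqref{subeq:rank1-sproperty-1} holds, \cref{lemma:case2-back-to-sproperty} produces $y$ with $y^T F_0 y<0$ and $y^T F_i y\unlhd 0$; the rank-one matrix $Y=yy^T\succeq 0$ then satisfies \eqref{eq:ZY-domain-2}, so by \cref{lemma:equivalence-between-Z-and-Y} the matrix $Z=PYP^T\succeq 0$ satisfies \eqref{subeq:sdp-strong-alternatives1}, and \cref{lemma:sdp-strong-alternatives} forces \eqref{eq:sa-21}. Hence \eqref{subeq:rank1-sproperty-1} and \eqref{eq:sa-21} are strong alternatives for every $\alpha$, and \cref{lemma:strong-alternatives} delivers the \sproperty.

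The bookkeeping above is parallel to \cref{theorem:sproperty-result}, so the only place that needs care is the hand-off to \cref{th:2ndcase}: one must verify that the structural hypotheses stated here in terms of $\Ao$ and $F_0$ are precisely the ones \cref{th:2ndcase} consumes (the same index set $M$, and ``nonnegative diagonal of $\Ao$'' genuinely implied by $\Ao\succeq 0$), and that the constants $\psi_i$ fed to \cref{th:2ndcase} are exactly the traces $\trace[F_i Y]$ coming out of \cref{lemma:equivalence-between-Z-and-Y}, so that the rank-one $y$ inherits $y^T F_i y\unlhd 0$ rather than merely $y^T F_i y=\psi_i$ in the inequality case. The genuine difficulty --- that the near-diagonal structure of the $F_i$ is enough to force a rank-one solution --- is confined to the proof of \cref{th:2ndcase} itself.
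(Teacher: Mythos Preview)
Your proposal is correct and matches the paper's approach. The paper does not write out an explicit proof of \cref{theorem:sproperty-result2}; it simply states the theorem after \cref{lemma:case2-back-to-sproperty}, indicating that the argument is the same block-diagram chain as in \cref{theorem:sproperty-result} with \cref{th:2ndcase} in place of \cref{theorem:rank1} and \cref{lemma:case2-back-to-sproperty} in place of \cref{lemma:from-sdp-to-QPl,lemma:from-sdp-to-QPe}, which is precisely what you have spelled out.
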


\section{Existence of solutions}
\label{sec:existence}
We introduce the following set of dual variables:
\begin{equation}
	W = \set{\blam \in \Gamma \,|\, \Ao+ \smsum_i \lambda_i \Ai \succeq 0}.
\label{eq:def-W}
\end{equation}

We have the following proposition:
\begin{proposition}
Function $q(\blam)$ is coercive if \cref{assumption:slater} is satisfied.
\label{theorem-coercive-existence}
\end{proposition}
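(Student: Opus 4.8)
The plan is to bound $q(\blam)$ from above, uniformly over $\Gamma$, by an affine function of $\|\blam\|$ with strictly negative slope; coercivity (in the sense that $q(\blam)\to-\infty$ as $\|\blam\|\to\infty$, equivalently that every superlevel set $\{\blam\in\Gamma:q(\blam)\ge c\}$ is bounded) then follows immediately. The only tool needed is weak duality: since $q(\blam)=\min_{\x}L(\x,\blam)$, we have $q(\blam)\le L(\hat\x,\blam)$ for \emph{any} fixed primal point $\hat\x$, and this holds even at those $\blam$ where $q(\blam)=-\infty$ (outside the effective domain $W$ of \eqref{eq:def-W}), so there is nothing to check there.

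For \ref{QPl} I would take the Slater point $\hat\x=(\hat x_i)_{i\in\N}$ from \cref{assumption:slater} and set $\delta:=\min_{i\in\N}\bigl(-g_i(\hat x_i)\bigr)>0$. Using \eqref{eq:lagrangian-QP} and $\lambda_i\ge 0$ on $\Gamma=\mathbb{R}^N_+$, one gets $q(\blam)\le f(\hat\x)+\sum_{i\in\N}\lambda_i g_i(\hat x_i)\le f(\hat\x)-\delta\sum_{i\in\N}\lambda_i=f(\hat\x)-\delta\|\blam\|_1$, and the right-hand side tends to $-\infty$ as $\|\blam\|\to\infty$ with $\blam\ge 0$. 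For \ref{QPe} a single global feasible point no longer works, but separability rescues the argument. For each $S\subseteq\N$ let $\hat\x^S$ be the vector whose $i$-th block equals $\hat x_i^1$ when $i\in S$ and $\hat x_i^2$ when $i\notin S$, with $\hat x_i^1,\hat x_i^2$ the points of \cref{assumption:slater-B}; put $C:=\max_{S\subseteq\N}f(\hat\x^S)$ (a maximum of $2^N$ finite values, hence finite) and $\delta:=\min\bigl\{\min_i(-g_i(\hat x_i^1)),\ \min_i g_i(\hat x_i^2)\bigr\}>0$. Given $\blam\in\mathbb{R}^N$, choosing $S=\{i:\lambda_i\ge 0\}$ makes $\lambda_i g_i\bigl((\hat\x^S)_i\bigr)\le-\delta|\lambda_i|$ for every $i$ (if $\lambda_i\ge 0$ the $i$-th block is $\hat x_i^1$ with $g_i<0$; if $\lambda_i<0$ it is $\hat x_i^2$ with $g_i>0$), so $q(\blam)\le f(\hat\x^S)+\sum_i\lambda_i g_i\bigl((\hat\x^S)_i\bigr)\le C-\delta\|\blam\|_1\to-\infty$.

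Since in finite dimensions $\|\blam\|\to\infty$ is equivalent to $\|\blam\|_1\to\infty$, this establishes the proposition in both cases. I do not expect a deep obstacle here: the one place requiring care is the equality case, where the separable structure must be invoked to pick the sign of each block independently, and the resulting loss of a fixed objective value is absorbed into the constant $C$ obtained by maximizing $f$ over the finitely many composite points $\hat\x^S$ — this bookkeeping is the main thing to get right.
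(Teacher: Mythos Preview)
Your argument is correct, and it takes a genuinely different route from the paper's. The paper works directly with the closed-form expression \eqref{eq:dual-function-QP} for $q(\blam)$: it splits into the case where some $\Ai$ with $\lambda_i\to\infty$ is indefinite (so the Hessian condition fails and $q=-\infty$) and the case where all such $\Ai$ are positive semidefinite, then carries out an asymptotic computation with the pseudoinverse $(\Ao+\sum_i\lambda_i\Ai)^\dagger$ to show that the limiting value is governed by $\cli-\bli^T\Ali^\dagger\bli$, which Slater forces to be negative. You instead bypass the explicit formula entirely via the trivial bound $q(\blam)\le L(\hat\x,\blam)$ at well-chosen test points, and for \ref{QPe} you exploit separability to pick the sign of each block according to the sign of $\lambda_i$, absorbing the dependence on $S$ into the finite constant $C=\max_S f(\hat\x^S)$. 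Your approach is shorter and more elementary, avoids pseudoinverse manipulations and case splits on the definiteness of the $\Ai$, and yields a uniform affine upper bound $q(\blam)\le C-\delta\|\blam\|_1$; the paper's computation, on the other hand, identifies the precise asymptotic rate along coordinate directions but is heavier and leaves the $\lambda_i\to-\infty$ case to the reader.
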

\begin{proof}
See \cref{appendixB}.
\end{proof}

\begin{corollary}
Suppose \cref{assumption:slater} is satisfied and there exists $\blam$ such that $\Ao+\smsum_i \lambda_i \Ai\succ 0$. Then, $q(\blam)$ has a nonempty and compact solution set.
Furthermore, \ref{QPl} and \ref{QPe} have a nonempty solution set.
\end{corollary}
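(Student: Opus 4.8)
The plan is to prove the two assertions of the corollary separately. For the dual one I would lean on \cref{theorem-coercive-existence} (coercivity of $q$) together with standard facts about concave upper semicontinuous functions; for the primal one I would use the strictly feasible dual vector — call it $\blam_0\in\Gamma$, the one with $M_0:=\Ao+\smsum_i\lambda_{0,i}\Ai\succ0$ — to dominate $f$ from below by a coercive quadratic on the feasible set.

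\emph{Dual part.} First I would record that $q(\blam)=\min_{\x}L(\x,\blam)$ is concave and upper semicontinuous on $\Gamma$, being a pointwise infimum of the affine-in-$\blam$ maps $\blam\mapsto L(\x,\blam)$, and that $q(\blam_0)\in\mathbb{R}$ because $L(\cdot,\blam_0)$ is a strictly convex quadratic with finite minimum. By \cref{theorem-coercive-existence} the superlevel set $K=\set{\blam\in\Gamma:q(\blam)\ge q(\blam_0)}$ is bounded; it is also closed ($q$ usc, $\Gamma$ closed) and nonempty, hence compact. An upper semicontinuous function attains its maximum on $K$, say at $\blam^\star$; since $q(\blam)<q(\blam_0)\le q(\blam^\star)$ for every $\blam\in\Gamma\setminus K$, the point $\blam^\star$ maximizes $q$ over all of $\Gamma$. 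Hence the dual solution set is nonempty, and being $\set{\blam\in\Gamma:q(\blam)\ge q(\blam^\star)}\subseteq K$ it is closed and bounded, therefore compact.

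\emph{Primal part.} Let $\mathcal{F}$ denote the feasible set of \ref{QPl} (resp.\ \ref{QPe}). I would first exhibit a point $\x_0\in\mathcal{F}$: in \ref{QPl}, $\x_0=(\hat{x}_i)_{i\in\N}$ from \cref{assumption:slater} works; in \ref{QPe}, for each block I would join $\hat{x}^1_i$ to $\hat{x}^2_i$ by a path in $\mathbb{R}^{n_i}$ and apply the intermediate value theorem to the continuous map $g_i$ along it to get a value with $g_i=0$, then stack these. The set $\mathcal{F}$ is closed, being an intersection of preimages of closed sets under continuous maps. For every $\x\in\mathcal{F}$ one has $\smsum_i\lambda_{0,i}g_i(x_i)\le0$ (with equality in \ref{QPe}; and because $\lambda_{0,i}\ge0$, $g_i(x_i)\le0$ in \ref{QPl}), so, writing $m_0=\bo+\smsum_i\lambda_{0,i}\bi$ and $\mu_0=\co+\smsum_i\lambda_{0,i}\cli$,
\[
 f(\x)=L(\x,\blam_0)-\smsum_i\lambda_{0,i}g_i(x_i)\;\ge\;L(\x,\blam_0)\;=\;\x^T M_0\,\x+2m_0^T\x+\mu_0 .
\]
Since $M_0\succ0$, the right-hand side tends to $+\infty$ as $\|\x\|\to\infty$, so $f$ is coercive on $\mathcal{F}$; hence $\set{\x\in\mathcal{F}:f(\x)\le f(\x_0)}$ is nonempty, closed and bounded, and the continuous function $f$ attains its minimum there, which is the global minimum over $\mathcal{F}$. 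Therefore \ref{QPl} and \ref{QPe} have nonempty solution sets.

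\emph{Main obstacle.} The only genuinely non-mechanical point is making sure the strict-feasibility hypothesis supplies a $\blam_0$ that lies in $\Gamma$, so that the sign bookkeeping $\smsum_i\lambda_{0,i}g_i\le0$ is legitimate for \ref{QPl}; after that, upgrading \cref{assumption:slater}.\ref{assumption:slater-B} to an equality-feasible point via the intermediate value theorem and transferring coercivity from $L(\cdot,\blam_0)$ to $f$ on $\mathcal{F}$ are routine, and the dual half is a standard Weierstrass argument powered by \cref{theorem-coercive-existence}.
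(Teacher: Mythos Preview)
Your dual argument is the paper's (coercivity from \cref{theorem-coercive-existence} plus Weierstrass), just spelled out more carefully via upper semicontinuity and superlevel sets. For the primal part you take a different and cleaner route: the paper argues boundedness-below from the dual and then invokes LICQ (automatic for separable constraints) to pass from regularity to attainability of a KKT point, whereas you use $\blam_0$ directly to bound $f\ge L(\cdot,\blam_0)$ on the feasible set, and since $M_0\succ0$ this makes $f$ coercive there, so Weierstrass applies on the closed nonempty feasible set. Your argument is self-contained and makes explicit where strict dual feasibility is spent; the paper's regularity-to-attainability step is only sketched. Your flagged obstacle --- that $\blam_0$ must lie in $\Gamma$ --- is real and is also implicit in the paper (which writes that $q$ ``has non-empty interior''); for \ref{QPe} it is automatic since $\Gamma=\mathbb{R}^N$, and for \ref{QPl} the corollary should indeed be read with $\blam\in\Gamma$.
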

\begin{proof}
Using the Weirstrass theorem~\cite[Prop.3.2.1]{Bertsekas2009}, $q(\blam)$ is coercive and has non-empty interior, which guarantees existence of a non-empty and compact solution set.
This guarantees boundedness of the solution.
To satisfy attainability we can show that every point in the quadratic problem is regular.
This occurs if some constraint qualification is satisfied in the feasible set.
Since we have separable constraints, the linear independece qualification constraint (LICQ) are automatically satisfied at every point.
Therefore, every stationary solution must satisfy a KKT condition and such points are attained.
\end{proof}

\section{Algorithms}
\label{sec:algorithms}

Based on semidefinite programming the QCQP can be rewritten as
\begin{equation}
\begin{IEEEeqnarraybox}[][c]{r'l}
	\min_{\x}   & \trace[\Ao X] + 2 \bo ^T \x + \co \\
	\text{s.t.} & \trace[\Ai X] + 2 \bi ^T \x + \cli \unlhd 0,\quad \forall i\in \N, \\
				& X = \x \x^T.
\end{IEEEeqnarraybox}
\end{equation}
Relaxing the rank constraint on $X$ to $X\succeq \x \x ^T$, and formulating the constraint using the Schur complement (see A.5.5 in \cite{Boyd2004}), we get
\begin{equation}
\begin{IEEEeqnarraybox}[][c]{r'l}
	\min_{\x}   & \trace[\Ao X] + 2 \bo ^T \x + \co \\
	\text{s.t.} & \trace[\Ai X] + 2 \bi ^T \x + \cli \unlhd 0,\quad \forall i\in \N, \\
	            & \begin{pmatrix}
		          	X & \x \\
					\x^T & 1 
				  \end{pmatrix} \succeq 0,
\end{IEEEeqnarraybox}
\label{prob:QPl-sdp}
\end{equation}
which is a convex problem that can be solved using a semidefinite program solver.
Problem \ref{prob:QPl-sdp} yields an optimal solution of problems \ref{QPl} or \ref{QPe}, if conditions of \cref{theorem:sproperty-result} are satisfied.

Likewise, solving problem \ref{eq:dual-problem-QPl} as the dual formulation also achieves an optimal solution.
The primal values can then be recovered determining
\begin{equation}
\label{eq:recovery-x}
\begin{IEEEeqnarraybox}[][c]{r'l}
	\argmin_{\x} & \x^T (\Ao +\smsum_i \lambda_i^{\ast}\Ai)\x + (\bo + \smsum_i \lambda_i^{\ast} \bi )^T \x,
\end{IEEEeqnarraybox}
\end{equation}
where $\blam^{\ast}$ corresponds to the solution of problem \ref{eq:dual-problem-QPl}.
In particular, the solution belongs to the following subspace
\begin{equation}
	\x^\ast \in -\big(\Ao + \smsum_i \lambda_i^{\ast} \Ai\big)^\dagger \big(\bo + \smsum_i \lambda_i^{\ast} \bi\big)^T +\nullspace\big[\Ao + \smsum_i \lambda_i^{\ast} \Ai\big],
\end{equation}
which has to be feasible and satisfy the complementarity conditions:
\begin{IEEEeqnarray}{l"l}
	\label{eq:cs-sdp}
	\lambda^{\ast}_i g_i(x_i^{\ast} ) = 0 & \forall i\in\N.
\end{IEEEeqnarray}
Note that if $\Ao + \smsum_i \lambda_i^{\ast} \Ai\succ 0$ the solution is unique.

Equation \eqref{eq:recovery-x} always has a solution if $\blam^{\ast}$ exists, since \eqref{eq:recovery-x} is an unconstrained quadratic problem.
In the case in which \eqref{eq:recovery-x} has multiple solutions, it is possible that not all of them will satisfy \eqref{eq:cs-sdp}.
How to discover a feasible solution in this case is studied in \cite{Fortin2004}.

\subsection{Projected dual ascent algorithm}
\label{sub:dual-gradient}
Solving a semidefinite program as \ref{eq:dual-problem-QPl} or \ref{prob:QPl-sdp} is costly, where interior point methods have a computational complexity of $O(p^5)$ in a worst-case setting to $O(p^3)$ for specialized structured methods, as described in \cite{Vandenberghe2005}.
This motivates to use other methods that may yield better performance in larger settings.

We introduce some new notation, and we denote $\g(\x)=(g_i(\x))_{i\in\N}$ as a column operator that collects all of the constraints.
The ascent algorithm is based on the following problem
\begin{IEEEeqnarray}{r'l}
	\max_{\blam} & \blam^T \g(\tilde{\x}(\blam^{k},\x^k)) \\
	\text{s.t.}  & \blam\in W,
\label{eq:dual-problem-QP}
\end{IEEEeqnarray}
where $W$ is defined in \eqref{eq:def-W} and $\widetilde{\x}(\blam^{k},\x^k)$ is an operator defined as:
\begin{equation}
	\widetilde{\x}(\blam,\x ^k) = \argmin_{\x\in\mathbb{R}^p}\: L(\x,\blam) 
\label{eq:argmin-lagrangian-qp}
\end{equation}
where $L(\x,\blam)$ is defined in \eqref{eq:lagrangian-QP}.
The projected dual ascent method is described in \cref{alg:projected-dual-ascent}.
Convergence analysis and performance corresponds to that of subgradient methods, see \cite{Boyd2003}.
In our formulation we introduced $\Pi_{W}$ as a projection operator onto set $W$ and $\alpha^k$ as the step size at iteration $k$.
The projection onto $W$ can be done using an L2 norm:
\begin{equation}
	\Pi_W(\blam^{\text{temp}})  = \argmin_{\blam\in W}\: \sum_{i\in \N} \Vert \lambda_i-\lambda_i^{\text{temp}} \Vert^2 .
\label{eq:projection-onto-W-L2}
\end{equation}

\begin{algorithm}
\begin{algorithmic}[1]
	\State Initialize $\x^0$ and $\blam^0$. Set $k\leftarrow 0$.
	\While{$\blam^k$ is not a stationary point}
		\State $\x^{k+1} = \widetilde{\x}(\blam^{k},\x^k)$. \label{eq:xg-operator} 
		\State $\blam^{\text{temp}} =  \blam^{k} + \mu_k \g(\x^{k+1})$.
		\State $\blam^{k+1} =  \Pi_{W}[\blam^{\text{temp}}]$.
		\State Set $k\leftarrow k+1$
	\EndWhile
\end{algorithmic}
\caption{Projected dual subgradient method}
\label{alg:projected-dual-ascent}
\end{algorithm}

\subsubsection{Augmented Lagrangian}
In order to infuse stability to dynamical systems derived from subgradient methods, it is frequent to use an augmented Lagrangian formulation \cite{Boyd2010}.
By doing so, the augmented Lagrangian becomes differentiable (if the problem becomes strictly convex) and the subgradient becomes a gradient at every iteration point.
In such case, problem \eqref{eq:argmin-lagrangian-qp} is substituted by:
\begin{equation}
	\widetilde{\x}(\blam,\x ^k) = \argmin_{\x\in\mathbb{R}^p}\: L(\x,\blam) + \sum_{i\in\N } \rho c( g_i(x_i) ).
\label{eq:augmented_lagrangian_centralized}
\end{equation}
where $c(\cdot)$ corresponds to a convex function that does not alter the cost of the problem at the optimal solution.
Assuming that $g_i(x_i)$ is part of an equality constraint, $c(\cdot) = \Vert \cdot \Vert ^2$ suffices to enforce stability to the subgradient method.
Alternatively, if $g_i(x_i)$ is part of an inequality constraint, then
\begin{equation}
	c(x) = 
	\begin{cases}
		\Vert x \Vert^2 & \text{if } x>0 \\
		0 & \text{otherwise}
	\end{cases}
\end{equation}
achieves the same purpose.

It is easy to verify that the squared term added to problem \eqref{eq:argmin-lagrangian-qp} does not alter the solution points nor the optimality requirements of the original quadratic problem.

\subsection{Projected dual ascent method with FLEXA decomposition}
\label{sub:distributed}
A centralized method such as \cref{sub:dual-gradient} can be parallelized solving the inner problem \eqref{eq:argmin-lagrangian-qp} using parallel techniques.
The projection problem remains a centralized step, which needs to be computed globally.
The proposed decomposition will allow to handle problems with large number of variables, or distributed datasets, which currently constitute problems of high interest.
To the best of our knowledge, there exists no framework for nonconvex QCQPs that uses a distributed or parallel scheme, which guarantees optimality even under the strong duality property.
In our proposal, we extend the setting of \cref{sub:dual-gradient} to include parallel updates using the FLEXA decomposition theory presented in \cite{Scutari2014}.

The methodology consists of a two-loop procedure that emerges from the dual ascent method presented in \cref{sub:dual-gradient}.
Starting from a dual formulation guarantees that the method will converge to the optimal solution whenever strong duality holds.
This is not satisfied if the algorithmic method relies on descent techniques over the primal problem, as the procedure may converge to a local optima.

The joint optimization update \eqref{eq:argmin-lagrangian-qp} is decoupled and solved iteratively using the FLEXA framework from \cite{Scutari2014}, while the dual variables are updated in an outer loop.
The framework approximates at every step the original problem with a convex surrogate that can be decomposed in parallel problems and solved efficiently.
The approximation is formed fixing the non-local group of variables to a point from a previous iteration, and optimizing only in the local variables.
The separability of the constraints allows to use a natural decomposition in block of variables.
Furthermore, a form that uses the augmented Lagrangian presented in \eqref{eq:augmented_lagrangian_centralized} can also be used, improving the convergence properties of the dual method if the unaugmented Lagrangian is not strongly convex in $\x$.

Following our procedure and satisfying the conditions from Theorem 3 in \cite{Scutari2014}, then the iterates will converge to the global minimum point of the Lagrangian \eqref{eq:argmin-lagrangian-qp}.
This occurs because the Lagrangian is convex on $\x$, and the descent steps are able to converge to the global minimum.

We start by proposing a surrogate problem, which has the following form:
\begin{equation}
\begin{IEEEeqnarraybox}[][c]{rCl}
	U(\x,\blam;\x^{k,q}) & = & \sum_{i\in\N} U_i(x_i,\blam;\x^{k,q}) + \rho_i \Vert x_i - x_i^{k,q} \Vert^2,  
\end{IEEEeqnarraybox}
\label{eq:parallel-problem}
\end{equation}
where
\begin{equation}
	U_i(x_i,\lambda_i;\x^{k,q}) = x_i^T \Aoii x_i + 2 \smsum_{j\neq i} (x_j^{k,q})^T \Aoij x_i + 2 \boi^T x_i + \lambda_i (x_i^T \Ali x_i + 2 \bli^T x_i + \cli),
\end{equation}
$\x^{k,q} = (x_j^{k,q})_{j\in\N}$ is a pivotal point indexed by the outer loop $k$, and inner loop $q$.
Matrix $\Ao$ is decomposed as follows:
\begin{equation}
	\Ao = 
	\begin{pmatrix}
		A_{0}^{11} & \cdots & A_0^{1N} \\
		\vdots & \ddots & \vdots \\
		A_0^{N1} & \cdots & A_0^{NN}
	\end{pmatrix}.
\end{equation}
Parameter $\rho_i \geq 0$ can be set to zero if $U_i(x_i,\lambda_i;\x^{k,q})$ is strongly convex in $x_i$.

Note that \eqref{eq:parallel-problem} is separable and convex in $x_i$.
It is a quadratic problem with no constraints, whose solution can be computed as follows (Appendix A in~\cite{Boyd2004}):
\begin{IEEEeqnarray}{l}
	\min_{x_i\in\mathbb{R}^{n_i}} \: U_i(x_i,\lambda_i;\x^{k,q}) + \rho_i \Vert x_i - x_i^{k,q} \Vert^2  \label{eq:qp-flexa-local} \\
	= \begin{cases}
		- r(\x^{k,q})^T X_i(\lambda_i) r(\x^{k,q}) + \co + \lambda_i \ci 
		& \text{if } X_i(\lambda_i) \succeq 0\:\wedge\: r(\x^{k,q}) \in \range[X_i(\lambda_i)]   \\
		-\infty & \text{otherwise} \IEEEnonumber
	\end{cases}
\label{eq:dual_problem_flexa}
\end{IEEEeqnarray}
where $X_i(\lambda_i) = \Aoii + \lambda_i \Ali + \rho_i \id_{n_i}$ and $r(\x^{k,q})=\boi+\lambda_i\bli+\sum_{j\neq i}(\Aoij)^T x^{k,q}_{j}+\rho_i x_i^{k,q}$.
The solution comes as
\begin{equation}
\begin{IEEEeqnarraybox}[][c]{rCL}
	\widetilde{x}_i(\lambda_i;\x^{k,q}) & = & \argmin_{x_i\in\mathbb{R}^{n_i}}\: U_i(x_i,\lambda_i;\x^{k,q}) + \rho_i \Vert x_i - x_i^{k,q} \Vert^2 \\
	& = & - X(\lambda_i)^{\dagger} r(\x^{k,q}) + \nullspace[X(\lambda_i)].
\end{IEEEeqnarraybox}
\end{equation}
Because the quadratic problem is strongly convex if $\rho_i>0$, there is a unique solution to the problem.
We can set $\rho_i = 0$ if $U_i(x_i,\lambda_i;\x^{k,q})$ is strongly convex.
All steps that we have discussed are described in \cref{alg:qp-flexa-dual-ascent}.

\begin{algorithm}
\begin{algorithmic}[1]
	\State Initialize $\x^{0,0}$ and $\blam^0$. Set $k\leftarrow 0$.
	\While{$\blam^k$ is not a stationary point}
		\State Set $q\leftarrow 0$.
		\While{$\x^{k,q}$ is not a stationary point}
			\State Compute $\widetilde{x_i}(\lambda_i,\x^{k,q})$ for all $i\in\N$ \textbf{in parallel}.
			\State Update $x_i^{k,q+1} = x_i^{k,q} + \alpha_q (\widetilde{x_i}(\lambda_i^k;\x^{k,q}) - x_i^{k,q})$ for all $i\in\N$.
			\State Set $q \leftarrow q+1$.
		\EndWhile
		\State $\blam^{\text{temp}} =  \blam + \mu_k \g(\x^{k,q})$.
		\State $\blam^{k+1} =  \Pi_{W}[\blam^{\text{temp}}]$.
		\State Set $k\leftarrow k+1$
	\EndWhile
\end{algorithmic}
\caption{Dual gradient ascent method with distributed FLEXA decomposition.}
\label{alg:qp-flexa-dual-ascent}
\end{algorithm}

Convergence of \cref{alg:qp-flexa-dual-ascent} is guaranteed if conditions of Theorem 3 in \cite{Scutari2014} are fulfilled and $\mu_k$ are chosen small enough so that the (sub)gradient method converges.
Combining the convergence results from \cite{Scutari2014} and diminishing step sizes from \cite{Boyd2003} for subgradient methods, we can establish the following theorem:
\begin{theorem}
	Given problem \ref{QPl} or \ref{QPe}. Assume that the \sproperty\ holds,
	\begin{equation*}
		0 < \inf_q \alpha_q \leq \sup_q \alpha_q \leq \alpha_{\max} \leq 1,\quad \rho_i>0\quad \forall i\in\N,
	\end{equation*}
	and $\mu_k$ is small enough. Then, \cref{alg:qp-flexa-dual-ascent} converges to an optimal solution of the QP.
\end{theorem}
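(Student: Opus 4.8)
The argument naturally splits along the two nested loops, which I would treat from the inside out and then close with strong duality.

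\emph{Inner loop.} Fix the outer index $k$. Since $\blam^k=\Pi_W[\,\cdot\,]\in W$ we have $\Ao+\smsum_i\lambda_i^k\Ai\succeq 0$, so $L(\cdot,\blam^k)$ is convex on $\mathbb{R}^p$ and bounded below, and every diagonal block $\Aoii+\lambda_i^k\Ali$ is positive semidefinite; hence $X_i(\lambda_i^k)=\Aoii+\lambda_i^k\Ali+\rho_i\id_{n_i}\succ 0$ because $\rho_i>0$. Consequently each surrogate $U_i(\cdot,\lambda_i^k;\x^{k,q})+\rho_i\Vert\cdot - x_i^{k,q}\Vert^2$ is strongly convex with a unique minimizer, its gradient agrees with the $i$th block-gradient of $L(\cdot,\blam^k)$ at the pivot, and it is Lipschitz on bounded sets; these are exactly the standing hypotheses of Theorem 3 in \cite{Scutari2014}. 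Together with the step-size condition $0<\inf_q\alpha_q\le\sup_q\alpha_q\le\alpha_{\max}\le 1$, that theorem yields $\x^{k,q}\to\widetilde{\x}(\blam^k,\cdot)\in\argmin_{\x}L(\x,\blam^k)$ as $q\to\infty$.

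\emph{Outer loop.} Treat the $\blam$-recursion as a projected subgradient ascent for the concave dual $q(\blam)=\min_{\x}L(\x,\blam)$ over the closed convex set $W$. Once $\x^{k+1}=\widetilde{\x}(\blam^k,\cdot)$ minimizes $L(\cdot,\blam^k)$, the standard envelope (Danskin) argument shows $\g(\x^{k+1})=(g_i(\x^{k+1}))_{i\in\N}$ is a (super)gradient of $q$ at $\blam^k$, so $\blam^{k+1}=\Pi_W[\blam^k+\mu_k\g(\x^{k+1})]$ is precisely a projected subgradient step. By \cref{theorem-coercive-existence}, $q$ is coercive under \cref{assumption:slater}, so its superlevel sets are bounded, $\argmax_{\blam\in W}q$ is nonempty and compact (and $W\neq\emptyset$ since the \sproperty\ makes \eqref{eq:dual-problem-QPl} feasible by \cref{proposition:strong-duality}), and the iterates $\{\blam^k\}$ stay bounded, whence the subgradients $\g(\x^{k+1})$ are uniformly bounded. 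Choosing $\mu_k$ small enough, i.e. diminishing and nonsummable as in the analysis of \cite{Boyd2003}, the projected subgradient method drives $q(\blam^k)\to q^{\star}:=\max_{\blam\in W}q(\blam)$ and $\blam^k$ to a dual optimum $\blam^{\star}$.

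\emph{Closing with strong duality.} Because the \sproperty\ holds, \cref{proposition:strong-duality} gives that $q^{\star}$ equals the optimal value of \ref{QPl} (resp. \ref{QPe}), and \cref{th:jeyakumar} certifies that any $\x^{\star}\in\argmin_{\x}L(\x,\blam^{\star})$ satisfying $\lambda_i^{\star}g_i(\x^{\star})=0$ is a global minimizer; such an $\x^{\star}$ is produced by the recovery step \eqref{eq:recovery-x}, and when $\Ao+\smsum_i\lambda_i^{\star}\Ai\succ 0$ it is the unique limit of $\x^{k,q}$. The main obstacle I anticipate is the coupling of the loops: Theorem 3 of \cite{Scutari2014} only guarantees \emph{asymptotic} inner convergence, whereas each dual step uses $\x^{k,q}$ after finitely many inner iterations, so a rigorous argument needs either an inexact-subgradient analysis (run the inner loop to accuracy $\varepsilon_k\to 0$ and absorb the errors, using boundedness of $\{\blam^k\}$) or a two-time-scale argument forcing $\mu_k$ small relative to the inner contraction. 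A secondary delicate point is selecting, in the singular case $\Ao+\smsum_i\lambda_i^{\star}\Ai\not\succ 0$, the particular minimizer of $L(\cdot,\blam^{\star})$ that meets \eqref{eq:cs-sdp}, which may require the post-processing of \cite{Fortin2004}.
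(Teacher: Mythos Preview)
Your proposal follows exactly the route the paper indicates: the paper does not give a detailed proof but only states (immediately before the theorem) that one should combine the FLEXA convergence result of \cite{Scutari2014} for the inner loop with the diminishing-step subgradient analysis of \cite{Boyd2003} for the outer loop, and then invoke the \sproperty\ for strong duality. You have faithfully and correctly unpacked that sketch; moreover, the coupling issue you flag (Theorem~3 of \cite{Scutari2014} is only asymptotic, yet each dual step uses a finitely-iterated $\x^{k,q}$) is a genuine technical point that the paper itself leaves unaddressed.
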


An extension of this decomposition using the augmented Lagrangian formulation from \cref{sub:dual-gradient} is possible, although in this case problem \eqref{eq:qp-flexa-local} is no longer quadratic and must be solved using some convex optimization method, rather than using expression \eqref{eq:dual_problem_flexa}.

\section{Robust least squares}
\label{sec:rls}

Least squares (LS) is a popular method that finds linear dependencies between output data $y$ and a list of input observations given by $A$ and $b$, such that, in the absence of noise, $A x -b = y$.
We assumed $x\in\mathbb{R}^p$, $A$ is matrix of dimensions $N\times p $ and $b\in\mathbb{R}^N$.
In the presence of noise, a squared error measure is proposed and minimized, such that $\Vert Ax-b\Vert^2$ has the smallest value possible.
The total least squares (TLS) problem aims to minimize an error, allowing changes in matrix $A$ and vector $b$, such that $(A+\Delta A)x -(b+\Delta b) = 0$ with minimal energy of $\Vert( \Delta A, \Delta b)\Vert^2_{\mathcal{F}}$.

Both problems are widely used in the literature, but they may present high sensitivity to perturbations in the feature components.
For this reason, regularization alternatives have been proposed that aim to minimize the output variance under noisy input parameters.
An alternative to such methods proposed in \cite{ElGhaoui1997} consists of solving a robust least squares (RLS) version as follows:
\begin{equation}
\begin{IEEEeqnarraybox}[][c]{r'l}
	\min_{x\in\mathbb{R}^p} \max_{(\Delta A,\Delta b)} & \Vert (A+\Delta A)x - (b+\Delta b)\Vert^2 \\
	\text{s.t.} & \Vert (\Delta A,\Delta b) \Vert^2 \leq \rho,
\end{IEEEeqnarraybox}
\end{equation}
where the output $x$ of such problem corresponds to the robust solution under finite perturbations on values $A$, $b$.

Our proposal to illustrate our nonconvex quadratic results consists of solving the RLS problem whenever the error bound $\rho$ is specified for different groups of values belonging to $A$ and $b$.
The reason behind different error values may be due to a distributed source of data, such as in \cite{Chen2015}.
Our RLS proposal becomes:
\begin{equation}
\begin{IEEEeqnarraybox}[][c]{r'l'l}
	\min_{x\in\mathbb{R}^p} \max_{(\Delta A,\Delta b)} & \Vert (A+\Delta A)x - (b+\Delta b)\Vert^2 \\
	\text{s.t.} & \Vert (\Delta A)_{:i} \Vert^2 \leq \rho_i & \forall i\in \set{1,\ldots,p} \\
	            & \Vert \Delta b \Vert^2 \leq \rho_{p+1},
\end{IEEEeqnarraybox}
\label{eq:rls-estimator}
\end{equation}
when errors are grouped in columns.
We denoted the $i$th column of matrix $\Delta A$ with $(\Delta A)_{:i}$.
We remark that the methods proposed in \cite{ElGhaoui1997} are only valid for a single quadratic constraint, but we can use the analysis we developed to solve the problem with multiple constraints as long as the \sproperty\ is satisfied.

Methods described by \cite{Ben-Tal2009} to solve robust problems significantly alter the structure of the QCQPs and prevent the use of our framework directly.
Therefore, we discuss an alternative procedure where we establish strong duality on every non-convex subproblem and provide convergence guarantees.

\subsection{Centralized proposal}
Our algorithmic proposal consists of a gradient descent method on variable $x$, while solving the maximization problem either in a centralized manner or using the decentralized method from \cref{sub:distributed}.
The specific steps for the minimization step and maximization problem are illustrated in \cref{alg:rls-sdp}.

The minimization step on variable $x$ takes the following form
\begin{equation}
	x^{k+1} = x^k + \alpha_x^k 2 (A+\Delta A)^T ((A+\Delta A)x^k-(b+\Delta b))
\end{equation}
where $\alpha_x^k$ corresponds to the gradient's step size and $k$ denotes the iteration step.

Consider the following renaming of variables: $\Delta = (\Delta A, \Delta b)$, $H = (A,b)$ and $\overline{x}=(x^T,-1)^{T}$ for a given $x$. 
The maximization problem can be written as
\begin{equation}
\begin{IEEEeqnarraybox}[][c]{r'l'l}
	\max_{\Delta} & \Vert (H + \Delta) \overline{x} \Vert^2 \\
	\text{s.t.} & \Vert \Delta_{:i} \Vert^2 \leq \rho_i & \forall i\in \set{1,\ldots,p+1},
\end{IEEEeqnarraybox}
\label{eq:rls-qp}
\end{equation}
where $\Delta_{:i}$ stands for the $i$th column of matrix $\Delta$.
By further denoting $U = \Delta^T \Delta$ and some transformations, an equivalent formulation becomes:
\begin{equation}
\begin{IEEEeqnarraybox}[][c]{r'l}
	\max_{U,\Delta} & \trace[U \overline{x}\overline{x}^T] + \trace[(H^T\Delta+\Delta^T H)\overline{x}\overline{x}^T]+ \trace[H^T H \overline{x}\overline{x}^T]\\
	\text{s.t.} & U_{ii} = \rho_i \qquad \forall  i\in \set{1,\ldots,p+1} \\
	            & U = \Delta^T \Delta.
\end{IEEEeqnarraybox}
\end{equation}
Easily, the SDP relaxation is obtained as follows when $U \succeq \Delta^T \Delta$, i.e.,
\begin{equation}
\begin{IEEEeqnarraybox}[][c]{r'l}
	\max_{U,\Delta} & \trace[U \overline{x}\overline{x}^T] + \trace[(H^T\Delta+\Delta^T H)\overline{x}\overline{x}^T]+ \trace[H^T H \overline{x}\overline{x}^T]\\
	\text{s.t.} & U_{ii} = \rho_i \qquad \forall  i\in \set{1,\ldots,p+1} \\
	            & 
	            \begin{pmatrix}
	            	U & \Delta^T \\
	            	\Delta & \id_{N}	           		
	            \end{pmatrix} \succeq 0.
\end{IEEEeqnarraybox}
\label{eq:rls-sdp}
\end{equation}

We can prove that the \sproperty\ is satisfied and that strong duality holds for this problem. 
This is achieved in the next theorem:

\begin{theorem}
Assume $\rho_i>0$ for all $i$. 
Then, strong duality between primal problem~\eqref{eq:rls-qp} and its dual~\eqref{eq:rls-sdp} holds for any $H\in \mathbb{R}^{N\times p+1}$ and $\overline{x}=(x^T,-1)^T\in\mathbb{R}^{p+1}$.
\label{th:rls}
\end{theorem}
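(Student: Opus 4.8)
The plan is to exhibit \eqref{eq:rls-qp} as an instance of \ref{QPl} with separable quadratic constraints, check the hypotheses of \cref{theorem:sproperty-result} to conclude that the \sproperty\ holds, invoke \cref{proposition:strong-duality}, and finally match the resulting dual \eqref{eq:dual-problem-QPl} with the SDP \eqref{eq:rls-sdp}.

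First I would vectorize $\Delta$ column by column, so the optimization variable is $\x=(\Delta_{:1},\dots,\Delta_{:p+1})$ with $p+1$ blocks, each of dimension $N$. Maximizing $\Vert(H+\Delta)\overline{x}\Vert^2$ is minimizing $f(\x)=-\Vert(H+\Delta)\overline{x}\Vert^2$, and the $i$-th constraint $\Vert\Delta_{:i}\Vert^2\le\rho_i$ is separable with $\Ali=\id_N$, $\bli=0$, $\cli=-\rho_i$, which is exactly the form of \ref{QPl}. Since $\rho_i>0$, $\Delta=0$ gives $g_i(0)=-\rho_i<0$ for all $i$, so part~A of \cref{assumption:slater} holds (and the feasible set is compact, so the optimum is finite and attained). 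Because $\bli=0\in\range[\Ali]$ we are in the simultaneously diagonalizable case; in \eqref{eq:p-matrix} one may take $P_i=\id_N$ and $p_i=-\Ali^{\dagger}\bli=0$, hence $P=\id$ and $F_0=\Abco-\alpha\evec$.

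The crux is producing the sign matrix $D$. Expanding $f$ gives $(\Ao)_{(i,k),(j,\ell)}=-\overline{x}_i\overline{x}_j$ if $k=\ell$ and $0$ otherwise, $(\bo)_{(i,k)}=-\overline{x}_i(H\overline{x})_k$, and $\co=-\Vert H\overline{x}\Vert^2$. The point to notice is that if the coordinates are grouped as $G_k=\{(1,k),\dots,(p+1,k)\}$, $k=1,\dots,N$, together with the last coordinate, then $F_0$ has no entries between distinct $G_k,G_\ell$, and on each $G_k$ plus the last coordinate it equals the rank-one negative semidefinite block $-w_kw_k^{T}$ (up to the purely diagonal corner $\co-\alpha$), where $w_k=(\overline{x}^{T},(H\overline{x})_k)^{T}$. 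A rank-one block $-w_kw_k^{T}$ becomes a $Z$-matrix after conjugation by a $\pm1$ diagonal exactly when the conjugated vector is sign-definite, which is achieved by the sign pattern of $w_k$; because the only coordinate shared across the blocks is the last one, these per-block choices are compatible. Concretely one takes the last diagonal entry of $D$ equal to $1$ and $d_{(i,k)}=\sign(\overline{x}_i)\sign((H\overline{x})_k)$ when both factors are nonzero, $d_{(i,k)}=\sign(\overline{x}_i)$ when $(H\overline{x})_k=0\ne\overline{x}_i$, and $d_{(i,k)}=1$ when $\overline{x}_i=0$; a direct check then yields $(DF_0D)_{(i,k),(j,k)}=-|\overline{x}_i|\,|\overline{x}_j|\le0$ for $i\ne j$, $(DF_0D)_{(i,k),(j,\ell)}=0$ for $k\ne\ell$, and $(DF_0D)_{(i,k),\,\mathrm{last}}=-|\overline{x}_i|\,|(H\overline{x})_k|\le0$, so $DF_0D$ is a $Z$-matrix (independently of $\alpha$, since $\evec$ only changes a diagonal entry). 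This is the one step that is not bookkeeping: one has to see the decoupling of $F_0$ over the row index $k$ and realize that the nonzero linear term $\bo$ — nonzero even though every $\bli=0$, because the objective couples all columns through $\overline{x}$ — is harmless once $D$ is allowed to depend on $\sign((H\overline{x})_k)$.

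With \cref{assumption:slater}, $\bli\in\range[\Ali]$, and the $Z$-matrix property in place, \cref{theorem:sproperty-result} yields that the QCQP satisfies the \sproperty, and \cref{proposition:strong-duality} then gives zero duality gap between the primal and the dual \eqref{eq:dual-problem-QPl}. Since, as derived in the text, \eqref{eq:rls-sdp} is the SDP relaxation of \eqref{eq:rls-qp} obtained through $U=\Delta^{T}\Delta$ and the Schur complement, and (being strictly feasible because $\rho_i>0$) it has the same optimal value as the Lagrangian dual \eqref{eq:dual-problem-QPl}, this zero gap is exactly the asserted strong duality between \eqref{eq:rls-qp} and \eqref{eq:rls-sdp}.
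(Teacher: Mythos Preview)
Your argument is correct and follows the same overall template as the paper's proof: cast \eqref{eq:rls-qp} as an instance of \ref{QPl}, verify \cref{assumption:slater} and $\bli\in\range[\Ali]$, produce a sign matrix $D$ making $DF_0D$ a $Z$--matrix, and invoke \cref{theorem:sproperty-result} together with \cref{proposition:strong-duality}.

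The execution, however, is different and in fact cleaner than the paper's. The paper first performs the change of variable $\Delta'=D'\col[\Delta+H]$ with $D'=\diag[\sign(\overline{x})]\otimes\id_N$, which kills the linear part of the objective (leaving $(\Delta')^T(|\overline{x}\,\overline{x}^T|\otimes\id_N)\Delta'$) but transfers the shift into the constraints, so that $\bli\neq 0$ and a nontrivial $P$ with $p_i=D_i'H_{:i}$ is needed; the sign matrix $D''$ is then built blockwise from the signs of $H_{i:}|\overline{x}|$. You instead keep the original variables, so $\bli=0$, $P=\id$, and $F_0=\Abco-\alpha\evec$; the price is that $\bo\neq 0$, which you handle by observing that $F_0$ decouples over the row index $k$ into rank-one blocks $-w_kw_k^T$ with $w_k=(\overline{x}^T,(H\overline{x})_k)^T$ sharing only the last coordinate, whence the compatible sign choice $d_{(i,k)}=\sign(\overline{x}_i)\sign((H\overline{x})_k)$. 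Both routes yield the same $Z$--matrix conclusion; yours avoids the preliminary change of variable and the nontrivial $P$, at the cost of a slightly more delicate (but still elementary) sign bookkeeping in the last column of $F_0$.
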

\begin{proof}
See \cref{appendixC}.
\end{proof}

The centralized method is described in \cref{alg:rls-sdp}.
\begin{algorithm}
\begin{algorithmic}[1]
	\State Initizalize $x^0$, $\Delta^0$ and $\{\alpha^k\}$.
	\While{STOP criteria is not satisfied on $x^k$}
		\State Solve problem \eqref{eq:rls-sdp}. Assign result to $\Delta^k = (\Delta A^k, \Delta b^k)$.
		\State Set $x^{k+1} \leftarrow x^k + \alpha_x^k 2 (A+\Delta A^k)^T ((A+\Delta A^k)x^k-(b+\Delta b^k))$.
		\State Set $k\leftarrow k+1$.
	\EndWhile
\end{algorithmic}
\caption{Centralized RLS algorithm}
\label{alg:rls-sdp}
\end{algorithm}

\appendix
\section{Proofs of \cref{sec:main}}
\label{appendixA}

\begin{proof}[\cref{lemma:strong-alternatives}]
From \cref{def:sproperty-qpl}, the \sproperty\ holds if the following systems are strong alternatives:
\begin{IEEEeqnarray}{l} \IEEEyesnumber
	\exists \x\in\mathbb{R}^p\,|\,f(\x) < \alpha\,\wedge\,g_i(x_i) \unlhd 0 \quad\forall i\in\N,\quad \Longleftrightarrow  
	\IEEEyessubnumber \\
	\nexists \blam\in\Gamma\,|\,f(\x)+\sum_{i\in\N}\lambda_i g_i(x_i)\geq \alpha\quad	\forall \x\in\mathbb{R}^p. 
	\IEEEyessubnumber \label{eq:sa-22}
\end{IEEEeqnarray}
We need to prove the equivalence of \eqref{eq:sa-21} and \eqref{eq:sa-22}.
The implication \eqref{eq:sa-21} $\Rightarrow$ \eqref{eq:sa-22} is immediate from
\begin{equation}
	[x,1]\Big(\Abco+\sum_{i\in\N} \lambda_i \Abci -\alpha \evec \Big)[x,1]^T=f(\x)+\sum_{i\in\N} \lambda_i g_i(x_i)- \alpha\geq 0.
\end{equation}
The implication \eqref{eq:sa-21} $\Leftarrow$ \eqref{eq:sa-22} is shown as follows. First, we define $h(\x,\blam)$:
\begin{IEEEeqnarray}{rCl}
	h(\x,\blam) & = & f(\x)+\sum_{i\in\N}\lambda_i g_i(x_i)- \alpha \nonumber \\
	 			& = & \x^T\Ao \x+2\bo^T\x+\co-\alpha+\sum_{i\in\N}\lambda_i\big(\x^T\Ai\x+2\bi^T\x+\cli\big) \geq 0.
\end{IEEEeqnarray}
Function $h(\x,\blam)$ is nonnegative $\forall \x\in\mathbb{R}^p$ if and only if a minimum exists and it is nonnegative, and if the function is convex.
This occurs if and only if
\begin{IEEEeqnarray}{l} \IEEEyessubnumber \phantomsection \label{eq:pos-sproperty}
	\Ao+\smsum_{i} \lambda_i\Ai\succeq 0 
	\IEEEyessubnumber \label{eq:pos-sproperty1}\\
	\co+\smsum_{i} \lambda_i\cli-\alpha-(\bo+\smsum_{i} \lambda_i\bli)^T \big(\Ao+\smsum_{i}\lambda_i\Ai\big)^{\dagger}(\bo+\smsum_{i}\lambda_i\bli)\geq 0
	\IEEEyessubnumber \label{eq:pos-sproperty2} \\
	\bo+\lambda_i \bi \in\range[\Ao+\smsum_{i} \lambda_i\Ai] 
	\IEEEyessubnumber \label{eq:pos-sproperty3}.
\end{IEEEeqnarray}
are all satisfied.
Note that \eqref{eq:pos-sproperty1} corresponds to the Hessian of $h(\x,\blam)$,
\eqref{eq:pos-sproperty2} corresponds to $h(\x_{\min},\blam)$, where
$\x_{\min}=-\big(\Ao+\sum_i\lambda_i\Ai\big)^{\dagger}(\bo+\sum_i\lambda_i\bli)$ is the minimum value of $h(\x,\blam)$,
and \eqref{eq:pos-sproperty3} guarantees there is no direction where the problem is unbounded.
See Appdx. A.5.4 in \cite{Boyd2004} for the complete description of the solution of a QCQP.
Given \eqref{eq:pos-sproperty} and using the relation of the Schur complement of a matrix and its positive semidefiniteness as presented in page 651 in \cite{Boyd2004}, we conclude that $\Abco+\sum_{i} \lambda_i \Abci -\alpha \evec\succeq 0$.
This ends the proof of the equivalence.
\end{proof}

\begin{proof}[\cref{lemma:sdp-strong-alternatives}]
The proof is a standard result in semidefinite programming and can be derived in a very similar way to Example 5.14 in \cite{Boyd2004}. 
We reproduce it here for completeness.
Consider the following problem:
\begin{equation}
\begin{IEEEeqnarraybox}[][c]{r'l}
	\min_{s\in\mathbb{R},\blam\in\Gamma} & s  \\
	\hspace{1.2em}\text{s.t.} & \Abco+\sum_{i\in\N}\lambda_{i}\Abci-\alpha\evec \succeq-s\id_{p+1}.
\end{IEEEeqnarraybox}
\label{eq:sdp-primal-problem}
\end{equation}
The Lagrangian has the following form:
\begin{IEEEeqnarray}{rCl}
	L(s,\blam,Z) & = & s-\trace\big[ (\Abco+\sum_{i\in\N}\lambda_{i}\Abci-\alpha\evec+s\id_{p+1} )Z\big] \nonumber \\
				 & = & s-\trace[(\Abco-\alpha\evec) Z]-\sum_{i\in\N}^{N}\lambda_{i}\trace[\Abci Z]-s\trace[Z]
\end{IEEEeqnarray}
and the dual function can take the following values:
\begin{equation}
\begin{IEEEeqnarraybox}[][c]{r'l}
	\min_{s\in\mathbb{R},\blam\in\Gamma} & L(s,\blam,Z)
	= \begin{cases}
			-\trace[(\Abco-\alpha\evec) Z] & \text{if }\trace[\Abci Z] \unlhd 0\;\forall i,\;\trace[Z]=1\\
			-\infty & \text{otherwise}.
	 \end{cases}
\end{IEEEeqnarraybox}\label{eq:sdp-dual-function}
\end{equation}
Maximizing the dual function given by \eqref{eq:sdp-dual-function} we obtain the dual problem:
\begin{equation}
\begin{IEEEeqnarraybox}[][c]{r'l}
	\max_{Z}    & -\trace[(\Abco-\alpha\evec) Z] \\
	\text{s.t.} & \trace[\Abci Z] \unlhd\;\forall i \\
			    & Z\succeq 0,\quad \trace[Z]=1.
\end{IEEEeqnarraybox}
\label{eq:sdp-dual-problem}
\end{equation}
Primal problem \eqref{eq:sdp-primal-problem} is convex and has non-empty interior (it satisfies Slater's condition) and, therefore, strong duality holds with it's dual formulation given by \eqref{eq:sdp-dual-problem}.

A technical condition needs to be satisfied to establish the strong alternative property between the systems described in 
\eqref{eq:sdp-strong-alternative-systems}: problem \eqref{eq:sdp-primal-problem} has to attain its infimum.
A sufficient condition is given by the following implication (see Example 5.14 in \cite{Boyd2004}):
\begin{equation}
	\nu_i\in\Gamma ,\:\sum_{i\in\N}\nu_i \Abci\succeq 0\Longrightarrow \nu_i=0\quad(\forall i\in\N) \Longrightarrow \sum_{i\in\N} \nu_i \Abci =0
\label{eq:sdp-constraint-qualification}
\end{equation}
which in our case is satisfied because of \cref{assumption:slater}.
In particular, every matrix $\Abci$ has at least a negative eigenvalue in the case of \ref{QPl}, and positive and negative eigenvalues in the case of \ref{QPe}, because of Slater's condition.
Moreover, matrices $\Abci$ do not overlap between each other except for the matrix elements $(p+1,p+1)$.
As a consequence, any positive linear combination of these matrices will have at least a negative eigenvalue, which fulfills the condition described in \eqref{eq:sdp-constraint-qualification}.

It is now clear that primal problem \eqref{eq:sdp-primal-problem} and dual problem \eqref{eq:sdp-dual-problem} attain the same objective value.
Therefore, if $s>0$, then $\trace[(\Abco-\alpha\evec) Z]<0$ and there exists no $\blam\in\Gamma$ such that $\Abco+\sum_{i\in\N}\lambda_i \Abci -\alpha \evec \succeq 0$. 
This shows that both conditions are strong alternatives and it
concludes the proof of the lemma.
\end{proof}

\begin{proof}[\cref{lemma:from-sdp-to-QPl}]
Implication \eqref{eq:vector-to-equation-1} $\Longleftarrow$ \eqref{eq:vector-to-equation-2} is verified, since we can introduce $z=(\x^T,1)^T=Py$ and satisfy \eqref{eq:vector-to-equation-1}:
\begin{IEEEeqnarray}{l} \IEEEyesnumber \phantomsection \label{eq:zy-domain}
    f(\x)-\alpha = z^T (\Abco-\alpha \evec) z = y^T P^T (\Abco-\alpha \evec) P y = y^T F_0 y < 0 
    \IEEEyessubnumber \\
	g_i(\x) = z^T \Abci z = y^T P^T \Abci P y = y^T F_i y \unlhd 0,\quad \forall i\in\N. 
	\IEEEyessubnumber
\end{IEEEeqnarray}

The opposite direction \eqref{eq:vector-to-equation-1} $\Longrightarrow$ \eqref{eq:vector-to-equation-2} is more elaborate.
We first need to show the following equivalence:
\begin{IEEEeqnarray}{l} \IEEEyesnumber \phantomsection \label{eq:vector-to-equation-F}
	\exists y\in\mathbb{R}^{p+1}\,|\, y^T F_0 y<0\,\wedge\,y^T \Fi y\leq 0,\,\forall i\in\N \quad \Longleftrightarrow 
	\IEEEyessubnumber \label{eq:vector-to-equation-F1}\\
	\exists \s\in\mathbb{R}^p\,|\,\s^T \Fo \s+2\fo^T \s+\dobj < 0\,\wedge\,\s^T \Fi \s + \ei \leq 0\quad \forall i\in\N,
	\IEEEyessubnumber \label{eq:vector-to-equation-F2}
\end{IEEEeqnarray}
where $\Fo$, $\fo$, $\dobj$, $\Fi$ and $\ei$ are defined in \eqref{eq:Fo-Fi-2-QP}.

Consider $y=(v^T,w)^T$ with $v\in\mathbb{R}^p$ and $w\in\mathbb{R}$. We have
\begin{equation}
	\begin{pmatrix}
		v \\ w
	\end{pmatrix}^T
	\underbrace{\begin{pmatrix}
		\Fo & \fo \\
		\fo^T & \dobj
	\end{pmatrix}}_{F_0}
	\begin{pmatrix}
		v \\ w
	\end{pmatrix} <0,\quad
	\begin{pmatrix}
		v \\ w
	\end{pmatrix}^T
	\underbrace{\begin{pmatrix}
		\Fi & 0 \\ 
		0^T  & \ei
	\end{pmatrix}}_{F_i}
	\begin{pmatrix}
		v \\ w
	\end{pmatrix} \leq 0,\quad \forall i\in\N.
\label{eq:Fo-Fi-2-QP}
\end{equation}
If $w\neq 0$ we can choose $\s=v/w$ and \eqref{eq:vector-to-equation-F2} is feasible.
If $w = 0$ we have $v^T\Fo v<0$ and $v^T \Fi v\leq 0$.
We can choose $\s=\sum_{i\in\N} \ts \hat{s}_i+ \tv v$, where $\hat{\s}$ is a  point satisfying Slater's condition (see \cref{assumption:slater}) in the congruent domain.
We define $\hat{\s}$ such that $\hat{\s}=\sum_i \hat{s}_i$ and the support of all $\hat{s}_i$ do not overlap.
This results into the following:
\begin{IEEEeqnarray}{l} \IEEEyesnumber
	\s^{T}\Fo  \s+2\fo^T \s+\dobj  \IEEEyessubnumber \label{eq:f(x)<0} \\
	\hspace{2em} 
	= (\sum_i \ts \hat{s}_i)^{T} \Fo (\sum_i \ts \hat{s}_i)+2\fo^T (\sum_i \ts \hat{s}_i)+\dobj +\underbrace{\tv^2 v^T \Fo v}_{<0}
	+2\tv(\sum_i \ts \Fo \hat{s}_i+\fo)^T v \IEEEnonumber \vspace{0.75em}  \\ 
	\IEEEeqnarraynumspace
	 \s^T \Fi \s+ \ei = \underbrace{\ts^2 \hat{s}^T_i \Fi \hat{s}_i +\ei}_{<0}
	 + \underbrace{\tv^2 v^T \Fi v}_{\leq0} + 2\tv \ts \hat{s}^T_i \Fi v  
	 < 2\tv \ts \hat{s}_i^T \Fi v \IEEEyessubnumber
\end{IEEEeqnarray}
By taking $\tv\rightarrow \infty$ and choosing $\ts=\pm 1$ depending on the signs of $\hat{s}^T_i \Fi v $, we can find an $\s$ such that \eqref{eq:vector-to-equation-F2} is satisfied.
In the original domain, we can recover $\x$ with $(\x,1)^T = P\cdot (\s,1)^T$, which satisfies \eqref{eq:vector-to-equation-2}.
This completes the proof.
\end{proof}

\begin{proof}[\cref{theorem:rank1}]
Because $Y\succeq 0$, it admits an orthogonal decomposition such that $Y=VV^T$ where $V=[v_1,v_2,\cdots, v_R]$ and $R\leq p+1$ represents the rank of $Y$.
Then,
\begin{IEEEeqnarray}{rCl}
	\trace[F_0 Y] & = & \trace[V^T F_0 V] = \sum_{r=1}^R v_r^T F_0 v_r 
	= \sum_{r=1}^R \sum_{i=1}^{p+1} (F_0)_{ii} v^2_{ri} +2\sum_{j>i} (F_0)_{ij} v_{ri} v_{rj}  <0 \\
	\trace[F_i Y] & = & \trace[V^T F_i V] = \sum_{r=1}^R v_r^T F_i v_r = \sum_{r=1}^R \sum_{j=1}^{p+1} (F_i)_{jj} v^2_{rj} = \psi_i\quad \forall i\in\N,
\end{IEEEeqnarray}
where $v_{rj}$ corresponds to the $j$'th component of vector $v_r$.
Recall that $(F_i)_{ij}=0$ for $i\neq j$. 
We consider vector $y=(y_j)_{j=1}^{p+1}$ such that $y_j^2 = \sum_{r=1}^Rv^2_{rj}$.
This implies that
\begin{equation}
	y^T F_i y = \sum_{j=1}^{p+1} (F_i)_{jj} y_j^2 =\sum_{j=1}^{p+1} (F_i)_{jj} \sum_{r=1}^R v^2_{rj} = \sum_{r=1}^R \sum_{j=1}^{p+1} (F_i)_{jj}v^2_{rj}
	= \trace[F_i Y] = \psi_i.	
\end{equation}
We introduce a diagonal matrix $D$ with $(D)_{jj}=\pm 1$ such that $y_j = (D)_{jj} \sqrt{\smsum_{r=1}^Rv^2_{rj}}$.
The product with $F_0$ becomes:
\begin{IEEEeqnarray*}{rCl}
	y^T F_0 y & = & \sum_{i=1}^{p+1} (F_0)_{ii} y_i^2 + 2\sum_{j>i} (F_0)_{ij} y_i y_j \\
	          & = & \sum_{i=1}^{p+1} (F_0)_{ii} \Big(\sum_{r=1}^R v^2_{ri}\Big) + 2\sum_{j>i} (F_0)_{ij} \bigg((D)_{ii} \sqrt{\sum_{r=1}^R v^2_{ri}}\bigg) 
	          \bigg((D)_{jj} \sqrt{\sum_{r=1}^R v^2_{ri}}\bigg)
\end{IEEEeqnarray*}
Consider the difference
\begin{equation}
	y^T F_0 y-\trace[F_0 Y]= 2\sum_{j>i} (F_0)_{ij} \bigg((D)_{ii} (D)_{jj} \sqrt{\smsum_{r,s=1}^R v^2_{ri}v^2_{sj}} - \smsum_{r=1}^R v_{ri} v_{rj}\bigg) \stackrel{?}{\leq}0
\label{eq:negative-difference}
\end{equation}
Expression \eqref{eq:negative-difference} is nonpositive if $(F_0)_{ij}(D)_{ii} (D)_{jj}$ is nonpositive for every pair $i\neq j$.
This is shown in \cref{prop:vector-inequality}.
\end{proof}

\begin{proposition}
	The following relation holds true for any sequence $v_{ri},v_{rj}$, $r\in\set{1,\cdots,R}$ and pair $i,j\in\set{1,\cdots,p+1}:$
\begin{equation}
	\sqrt{\smsum_{r,s=1}^R v^2_{ri}v^2_{sj}} \geq \smsum_{r=1}^R v_{ri} v_{rj}
	\label{eq:vector-inequality}
\end{equation}
\label{prop:vector-inequality}
\end{proposition}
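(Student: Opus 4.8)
The plan is to prove the inequality
\begin{equation*}
	\sqrt{\smsum_{r,s=1}^R v^2_{ri}v^2_{sj}} \geq \smsum_{r=1}^R v_{ri} v_{rj}
\end{equation*}
by recognizing that the left-hand side factors as a product of Euclidean norms. Indeed, writing $a=(v_{1i},\ldots,v_{Ri})$ and $b=(v_{1j},\ldots,v_{Rj})$, the double sum under the square root separates: $\sum_{r,s} v_{ri}^2 v_{sj}^2 = \big(\sum_r v_{ri}^2\big)\big(\sum_s v_{sj}^2\big) = \Vert a\Vert^2 \Vert b\Vert^2$, so the left-hand side equals $\Vert a\Vert\,\Vert b\Vert$. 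The right-hand side is exactly the inner product $a^T b = \sum_r v_{ri}v_{rj}$.

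With this identification, the claim reduces to $\Vert a\Vert\,\Vert b\Vert \geq a^T b$, which is the Cauchy--Schwarz inequality (in the slightly weakened one-sided form, since we only need the upper bound on $a^T b$, not on $|a^T b|$). First I would state the factorization of the double sum explicitly, then invoke Cauchy--Schwarz; alternatively one can give a self-contained one-line argument via $0 \le \sum_{r<s}(v_{ri}v_{sj} - v_{si}v_{rj})^2$, which expands to exactly $\Vert a\Vert^2\Vert b\Vert^2 - (a^Tb)^2 \ge 0$, hence $\Vert a\Vert\Vert b\Vert \ge |a^Tb| \ge a^Tb$.

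There is essentially no obstacle here: the only thing to be careful about is that the square root is the nonnegative root, so the inequality is trivially true whenever $\sum_r v_{ri}v_{rj} \le 0$, and for the case $\sum_r v_{ri}v_{rj} > 0$ we may square both sides safely and apply Cauchy--Schwarz. I would present it as a short direct computation culminating in Cauchy--Schwarz, which then completes the proof of \cref{theorem:rank1} via \eqref{eq:negative-difference}.
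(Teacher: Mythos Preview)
Your proposal is correct and is essentially the same argument as the paper's: the paper squares both sides and then applies $a^2+b^2\ge 2ab$ with $a=v_{ri}v_{sj}$, $b=v_{rj}v_{si}$ and sums over $r,s$, which is exactly the elementary proof of Cauchy--Schwarz (equivalently, your expansion of $\sum_{r<s}(v_{ri}v_{sj}-v_{si}v_{rj})^2\ge 0$). The only difference is presentational: you name the inequality, whereas the paper reproves it from scratch.
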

\begin{proof}
Take squares on the left and right hand sides of \eqref{eq:vector-inequality}, which gives
\begin{equation}
	\sum_{r,s=1}^R v^2_{ri}v^2_{sj} \geq \sum_{r,s=1}^R v_{ri} v_{sj} v_{rj} v_{si} 
	\label{eq:vector-inequality-2}
\end{equation}
Note that $a^2+b^2 \geq 2ab$ for any $a,b\in\mathbb{R}$. Introduce $a=v_{ri}v_{sj}$ and $b=v_{rj}v_{si}$:
\begin{equation}
	v_{ri}^2v_{sj}^2+v_{rj}^2v_{si}^2 \geq 2 v_{ri}v_{sj}v_{rj}v_{si},
\end{equation}
which is satisfied for every pair of $r, s$. Summing all of these inequalities yields \eqref{eq:vector-inequality-2} which implies \eqref{eq:vector-inequality}.
\end{proof}

\begin{proof}[\cref{lemma:from-sdp-to-QPe}]
Implication \eqref{eq:vector-to-equation-3} $\Longleftarrow$ \eqref{eq:vector-to-equation-4} is immediate, since we can introduce $z=(\x^T,1)^T=Py$ and satisfy \eqref{eq:vector-to-equation-3} same as in \cref{lemma:from-sdp-to-QPl}.
The direction \eqref{eq:vector-to-equation-3} $\Longrightarrow$ \eqref{eq:vector-to-equation-4} is more elaborate and we need to show the following equivalence:
\begin{IEEEeqnarray}{l} \IEEEyesnumber \phantomsection \label{eq:vector-to-equation-F-QPe}
	\exists y\in\mathbb{R}^{p+1}\,|\, y^T F_0 y<0\,\wedge\,y^T \Fi y= 0,\,\forall i\in\N \Longleftrightarrow 
	\IEEEyessubnumber \label{eq:vector-to-equation-F1-QPe}\\
	\exists \s\in\mathbb{R}^p\,|\,\s^T \Fo \s+2\fo^T \s+\dobj < 0\,\wedge\,\s^T \Fi \s + \ei = 0,\,\forall i\in\N 
	\IEEEyessubnumber \label{eq:vector-to-equation-F2-QPe}.
\end{IEEEeqnarray}

Consider $y=(v^T,w)^T$ with $v\in\mathbb{R}^p$ and $w\in\mathbb{R}$. We have
\begin{equation}
	\begin{pmatrix}
		v \\ w
	\end{pmatrix}^T
	\begin{pmatrix}
		\Fo & \fo \\
		\fo^T & \dobj
	\end{pmatrix}
	\begin{pmatrix}
		v \\ w
	\end{pmatrix} <0,\quad
	\begin{pmatrix}
		v \\ w
	\end{pmatrix}^T
	\begin{pmatrix}
		\Fi & 0 \\ 
		0^T  & \ei
	\end{pmatrix}
	\begin{pmatrix}
		v \\ w
	\end{pmatrix} = 0,\quad \forall i\in\N.
\end{equation}
If $w\neq 0$ we can choose $\s=v/w$ and \eqref{eq:vector-to-equation-F2-QPe} is feasible.
If $w = 0$ we have $v^T\Fo v<0$ and $v^T \Fi v = 0$ for all $i$.
We require that $\Fi \neq 0$ so that $\hat{s}_i^T \Fi \hat{s}_i\neq 0$.
This is equivalent to require that $\Ai \neq 0$.
We try $\s=\smsum_i \ts \hat{s}_i+ \tv v$ where the support of every $\hat{s}_i$ does not overlap.
We get the following:
\begin{IEEEeqnarray}{l} \IEEEyesnumber
	\s^{T}\Fo \s+2\fo^T \s+\dobj \\
	\hspace{2em} = (\smsum_i \ts\hat{s}_i)^{T} \Fo (\smsum_i \ts\hat{s}_i)  +2 \fo^T (\smsum_i \ts\hat{s}_i) + \dobj 
	+\underbrace{\tv^2 v^T \Fo v }_{<0}
	+2 \tv (\smsum_i \ts \Fo \hat{s}_i + \fo)^T v  
	\IEEEnonumber \label{eq:f(x)<0-2} \\
	\s^T \Fi \s+\ei
	= \underbrace{\ts^2 \hat{s}^T_i \Fi \hat{s}_i + \ei}_{A}
	+ \underbrace{\tv^2 v^T \Fi v}_{= 0} 
	+ \underbrace{2 \ts \tv \hat{s}^T_i \Fi v}_{B}
	\IEEEyessubnumber \label{eq:g(x)=0-2}
\end{IEEEeqnarray}
The idea is the same as before: make $\tv$ large enough so that \eqref{eq:f(x)<0-2} becomes negative.
We need to verify that \eqref{eq:g(x)=0-2} equals zero for some $\ts$.
We consider two cases:
\begin{enumerate}
\item If $v\in \nullspace[\Fi]$, then $\hat{s}^T_i \Fi v=0$, so we need to choose $\hat{s}_i$ such that $A$ equals zero. Such point is guaranteed to exist because of \cref{assumption:slater}.
\item If $v\notin \nullspace[\Fi]$, then choose $\hat{s}_i$ such that $\hat{s}_i^T \Fi \hat{s}_i \neq 0$. Start with $\ts = 0$ and increase its value until $A=-B$. Choose the appropriate sign accordingly.
\end{enumerate}
We recover $\x$ with $(\x,1)^T = P \cdot (\s,1)^T$, which satisfies \eqref{eq:vector-to-equation-4}.
\end{proof}

\begin{proof}[\cref{th:2ndcase}]
Consider $Y\succeq 0$ with spectral decomposition $Y = VV^T$ where $V=[v_1,\ldots,v_R]$ and $R\leq p+1$ represents the rank of $Y$.
We have in the constraints that
\begin{equation}
	\trace[F_i Y] = \trace[V^T F_i V] = \sum_{r=1}^R \sum_{j\notin M} (F_i)_{jj}v^2_{rj} + \sum_{m\in M} 2 (F_i)_{m,p+1} v_{r,p+1} v_{r,m} = \psi_i 
	\qquad \forall i\in \N,
\end{equation} 
where $v_r = (v_{rj})_{j=1}^{p+1}$.
We can choose a rank~1 candidate solution $y=(y_j)_{j=1}^{p+1}$ such that
\begin{IEEEeqnarray}{rCl'l}
	y_j & = & (D)_{jj}\sqrt{\sum_{r}v_{ri}^{2}}, & j\notin M,  \\
	y_j & = & (D)_{p+1,p+1}\frac{\sum_{r}v_{r,p+1}v_{ri}}{y_{p+1}} = \frac{\sum_{r}v_{r,p+1}v_{ri}}{\sqrt{\sum_{r}v_{r,p+1}^{2}}}, & j\in M.
\end{IEEEeqnarray}
Using vector $y$ in the constraints gives
\begin{IEEEeqnarray}{rCl'l}
	y^T F_i y & = & \sum_{j\notin M} (F_i)_{jj} y_j^2 + \sum_{m\in M} 2 (F_i)_{m,p+1} y_{p+1} y_m \\
	          & = & \sum_{r=1}^R \sum_{j\notin M} (F_i)_{jj} v^2_{rj} + \sum_{m\in M} 2 (F_i)_{m,p+1} v_{r,p+1} v_{r,m} = \psi_i 
	& \forall i\in \N.
\end{IEEEeqnarray}
The choice of vector $y$ fulfills $\trace[F_i Y]= y^T F_i y = \psi_i$ for $i\in\N$.
We need to verify that $ y^T F_0 y \leq \trace[F_0 Y]$.
Consider the following:
\begin{IEEEeqnarray}{rCl}
	\trace[F_0 Y] & = & \trace[V^T F_0 V] = \sum_{r=1}^R v_r^T F_0 v_r 
	= \sum_{r=1}^R \sum_{i=1}^{p+1} (F_0)_{ii} v^2_{ri} +2\sum_{j>i} (F_0)_{ij} v_{ri} v_{rj}  <0  \label{eq:case2-sdp}\\
	y^T F_0 y     & = & \sum_i (F_0)_{ii} y_i^2 + 2 \sum_{j>i} (F_0)_{ij} y_i y_j  \label{eq:case2-sdp-2}
\end{IEEEeqnarray}
and compare both expressions term by term.
From \eqref{eq:case2-sdp-2} we get
\begin{IEEEeqnarray}{rCl'l}
	(F_{0})_{ii} y_i^2 & = & \sum_r (F_{0})_{ii} v_{ri}^{2} & i\notin M \label{eq:case2-y-1} \\
	(F_{0})_{ij} y_i y_j & = & \sum_r (D)_{ii}(D)_{jj} (F_{0})_{ij} \sqrt{\sum_{r} v^2_{ri}} \sqrt{\sum_{r} v^2_{rj}} & i,j\notin M  \label{eq:case2-y-2}  \\
	(F_{0})_{ii} y_i^2 & = & (F_{0})_{ii} \frac{(\sum_r v_{r,p+1}v_{ri})^2}{\sum_r v_{r,p+1}^2} & i\in M  \label{eq:case2-y-3} \\
	(F_{0})_{i,p+1} y_i y_{p+1} & = & \sum_r  (F_{0})_{i,p+1} v_{ri} v_{r,p+1} & i\in M  \label{eq:case2-y-4} \\
	(F_{0})_{i,j} & = & 0 & i\in M, j\neq p+1, j\neq i.  \label{eq:case2-y-5} 
\end{IEEEeqnarray}
Now we can establish the following relations when comparing term by term with \eqref{eq:case2-sdp}:
\begin{IEEEeqnarray}{rCl'l}
	(F_{0})_{ii} y_i^2 & = & \sum_r (F_0)_{ii} v^2_{ri} &  i\notin M \\    
	(F_{0})_{ij} y_i y_j & \leq & \sum_r (F_0)_{ij} v_{ri} v_{rj} &  i,j\notin M \text{ if } (D)_{ii}(D)_{jj}(F_0)_{ij} \leq 0 
	\IEEEeqnarraynumspace \\  
	(F_{0})_{ii} y_i^2 & \leq & \sum_r (F_0)_{ii} v^2_{ri} & i\in M \text{ if } (F_0)_{ii} \geq 0 \\ 
	(F_{0})_{i,p+1} y_i y_{p+1} & = & \sum_r  (F_{0})_{i,p+1} v_{ri} v_{r,p+1} & i\in M \\ 
	(F_{0})_{i,j} y_i y_j & = & \sum_r  (F_{0})_{i,j} v_{ri} v_{r,j} = 0 & i\in M, j\neq p+1, j\neq i. 
\end{IEEEeqnarray}
and conclude that 
\begin{IEEEeqnarray}{rCl'l}
	y^T F_0 y & \leq & \trace[F_0 Y] \\
	y^T F_i y & = & \trace[F_i Y] = \psi_i & \forall i\in\N.
\end{IEEEeqnarray}

\end{proof}

\begin{proof}[\cref{lemma:case2-back-to-sproperty}]
Implication \eqref{eq:vector-to-equation-5} $\Longleftarrow$ \eqref{eq:vector-to-equation-6} is verified, since we can introduce $z=(\x^T,1)^T=Py$:
\begin{IEEEeqnarray}{l} \IEEEyesnumber \phantomsection \label{eq:zy-domain-2case}
    f(\x)-\alpha = z^T (\Abco-\alpha \evec) z = y^T P^T (\Abco-\alpha \evec) P y = y^T F_0 y < 0 
    \IEEEyessubnumber \\
	g_i(\x) = z^T \Abci z = y^T P^T \Abci P y = y^T F_i y \unlhd 0,\quad \forall i\in\N. 
	\IEEEyessubnumber
\end{IEEEeqnarray}

The opposite direction \eqref{eq:vector-to-equation-5} $\Longrightarrow$ \eqref{eq:vector-to-equation-6} is also simple.
Consider $y=(v^T,w)^T$ with $v\in\mathbb{R}^p$ and $w\in\mathbb{R}$.
\begin{equation}
	\begin{pmatrix}
		v \\ w
	\end{pmatrix}^T
	\underbrace{\begin{pmatrix}
		\Fo & \fo \\
		\fo^T & \dobj
	\end{pmatrix}}_{F_0}
	\begin{pmatrix}
		v \\ w
	\end{pmatrix} <0,\quad
	\begin{pmatrix}
		v \\ w
	\end{pmatrix}^T
	\underbrace{\begin{pmatrix}
		\Fi & 0 \\ 
		0^T  & \ei
	\end{pmatrix}}_{F_i}
	\begin{pmatrix}
		v \\ w
	\end{pmatrix} \leq 0,\quad \forall i\in\N.
\end{equation}
If $w\neq 0$ we can choose $\s=v/w$ and \eqref{eq:vector-to-equation-6} follows.
We cannot have $w = 0$ because $\Ao\succeq 0$ by assumption, and  $v^T\Fo v\not<0$ for any $v$.
\end{proof}

\section{Proof of coercitivity (\cref{sec:existence})}
\label{appendixB}
\begin{proof}
In order to prove the coercivity of the function, we need to show that if $\Vert \blam \Vert \rightarrow \infty$, then $q(\blam) \rightarrow -\infty$.
Let's indicate  $M=\set{i\in\N\,|\,\lambda_i \rightarrow \infty}$ and $\overline{M}=\set{i\in\N\,|\,\lambda_i \nrightarrow \infty}$.
We require that $M$ is nonempty.

We distinguish two cases, one where there $\exists i\in M\,|\,\Ai \nsucceq 0$, and another where $\forall i\in M,\,\Ai \succeq 0$.
In the first case, $q(\blam) \rightarrow -\infty$ because $\Ao+\sum_i \lambda_i\Ai\nsucceq 0$ as $\lambda_i \rightarrow \infty$.
In the second case, we take $\lambda_i=\lambda$ for every $i\in M$ and we get 
\begin{IEEEeqnarray*}{l}
	\lim_{\Vert \blam \Vert\rightarrow\infty} q(\blam) =  \lim_{\Vert \blam \Vert\rightarrow\infty} 
	\smsum_i \lambda_i \cli
	-( \bo +\smsum_i \lambda_i \bi)^T  
	 (\Ao+\smsum_i \lambda_i \Ai)^\dagger( \bo + \smsum_i \bi )  \\
		= \lim_{\Vert \blam \Vert\rightarrow\infty} 
	\lambda \Big( \frac{1}{\lambda} \smsum_i \lambda_i \cli
	-\frac{1}{\lambda^2}( \bo +\smsum_i \lambda_i\bi)^T  
	 \Big(\frac{1}{\lambda}\Ao+\frac{1}{\lambda} \smsum_i \lambda_i \Ai\Big)^\dagger( \bo + \smsum_i \lambda_i\bi ) \Big) \\
		= \lim_{\Vert \blam \Vert\rightarrow\infty}\smsum_{i\in M} \lambda \Big(c_i - \bi^T\Big(\frac{1}{\lambda}\Ao+\frac{1}{\lambda}\smsum_{j\in\overline{M}} \lambda_j \widetilde{A}_j +\smsum_{j\in M}\widetilde{A}_j \Big)^{\dagger} \bi \Big) \\
		= \lim_{\Vert \blam \Vert\rightarrow\infty} \smsum_{i\in M} \lambda (\cli -\bi^T  \Ai^\dagger \bi). \IEEEyesnumber
\label{eq:limit-q}
\end{IEEEeqnarray*}
Note that we used the relations
\begin{equation}
	\Big(\smsum_{i\in M} \Ai \Big)^{\dagger} = \smsum_{i\in M} \Ai^{\dagger}\quad\text{and}\quad
	\bi \Ai^{\dagger} b_j = {b}_j \Ai^{\dagger} {b}_j=0\quad\forall i\neq j
\end{equation}
because of the non-overlapping property of the $\Ai$'s and $\bi$'s.

Considering the case where $\Ali\succeq 0$, we can express $g_i(x_i)$ in the following form:
\begin{equation}
	g_i(x_i)  
		= \big\Vert (\Ali)^{1/2}x_i +\bli^{\text{sq}}   \big\Vert^2 - c_i^{sq} 
	 = x_i^T \Ali x_i+2\bli^T x_i+\cli,
\end{equation}
where $\Ali= (\Ali)^{T/2}(\Ali)^{1/2}$, $\bli^T = (\bli^{\text{sq}})^T (\Ali)^{1/2}$, $\cli = (\bli^{\text{sq}})^T\bli^{\text{sq}}- c_i^{sq}$.
Because of Slater's condition there exists an $\hat{x}_i$ such that $g_i(\hat{x}_i)<0$ and we necessarily have $c_i^{sq}>0$. 
Determining the value of \eqref{eq:limit-q} we get:
\begin{equation*}
	\cli - \bli^T \Ali^{\dagger}\bli = (\bli^{\text{sq}})^T\bli^{\text{sq}}- c_1^{sq}
	-(\bli^{\text{sq}})^T (\Ali)^{1/2}(\Ali)^{\dagger/2}(\Ali)^{\dagger T/2}(\Ali)^{T/2}(\bli^{\text{sq}}) 
	= - c_1^{sq} < 0
\end{equation*}
and conclude that $\lim_{\Vert \blam \Vert\rightarrow\infty} q(\blam) = -\infty$.
The case in which $\lambda_i \rightarrow -\infty$ is analyzed in a similar way and we omit it.
\end{proof}

\section{Proof of strong duality in the robust least squares problem (\cref{sec:rls})}
\label{appendixC}
\begin{proof}[\cref{th:rls}]
We want to show that problem \eqref{eq:rls-qp2} presents the strong duality property:
\begin{equation}
\begin{IEEEeqnarraybox}[][c]{r'l'l}
	\max_{\Delta} & \Vert (H + \Delta) \overline{x} \Vert^2 \\
	\text{s.t.} & \Vert \Delta_{:i} \Vert^2 \leq \rho_i & \forall i\in \set{1,\ldots,p+1}.
\end{IEEEeqnarraybox}
\label{eq:rls-qp2}
\end{equation}
We can rewrite the objective function of problem \eqref{eq:rls-qp2} in quadratic form as follows:
\begin{IEEEeqnarray}{rCl}
	\Vert (H + \Delta) \overline{x} \Vert^2 & = & \overline{x}^T (H+\Delta)^T (H+\Delta) \overline{x}
	= \Big( \sum_i \overline{x}_i (H_{:i}+\Delta_{:i})^T \Big) 
	\Big( \sum_i \overline{x}_i (H_{:i}+\Delta_{:i}) \Big) 
	\IEEEnonumber \\
	& = & \sum_i \overline{x}_i^2 (H_{:i}+\Delta_{:i})^T (H_{:i}+\Delta_{:i}) + \sum_{j\neq i} 2 \overline{x}_i \overline{x}_j (H_{:j}+\Delta_{:j})^T (H_{:i}+\Delta_{:i}),
\label{eq:rls-qp-in-proof}
\end{IEEEeqnarray}
where $\overline{x}_i$ refers to the $i$th component of vector $\overline{x}$ (a scalar), and $\Delta_{:i}$ or $H_{:i}$ refer to the $i$th column of matrix $\Delta$ or $H$, respectively.
We now reformulate \eqref{eq:rls-qp-in-proof} to standard quadratic form:
\begin{equation}
	\Vert (H + \Delta) \overline{x} \Vert^2 =\col [\Delta + H]^T 
	( \overline{x}\overline{x}^T \otimes \id_{N})  \col [\Delta+ H],
\end{equation}
where $\col [\Delta] = (\Delta^T_{:1},\ldots,\Delta^T_{:p+1})^T$ and $\otimes$ represents the Kronecker product.
We now make the following change of variable $\Delta' = D' \col [\Delta+ H]$, where $D'$ is a diagonal matrix derived as follows: $D' = \diag[\sign[\overline x]]\otimes \id_{N}$, where $\sign[\overline x]$ returns a vector with $\pm 1$ elements depending on the sign of $\overline{x}$, and $\sign[0] = +1$.
We get the following
\begin{IEEEeqnarray}{rCl}
	\Vert (H + \Delta) \overline{x} \Vert^2 & = & \col [\Delta + H]^T D'D'( \overline{x}\overline{x}^T \otimes \id_{N})D'D'  \col [\Delta+ H] \IEEEnonumber \\
	& = & \col [\Delta + H]^T D'( |\overline{x}\overline{x}^T| \otimes \id_{N})D'  \col [\Delta+ H]
	= (\Delta')^T \Ao \Delta',
\end{IEEEeqnarray}
where $\Ao = (|\overline{x}\overline{x}^T|\otimes \id_N)$ is a completely positive matrix.
Recall that matrix $A$ is completely positive if and only if there exists a decomposition such that $A=BB^T$, where $B$ has non-negative elements.
We can now reformulate problem \eqref{eq:rls-qp2} into:
\begin{equation}
\begin{IEEEeqnarraybox}[][c]{r'l'l}
	\max_{\Delta} & (\Delta')^T \Ao \Delta' \\
	\text{s.t.} & \Vert D'_i \Delta'_{:i}-H_{:i} \Vert^2 \leq \rho_i & \forall i\in \set{1,\ldots,p+1},
\end{IEEEeqnarraybox}
\label{eq:rls-qp3}
\end{equation}
where $D'_i$ is obtained from $D'$.
Problem \eqref{eq:rls-qp3} is in standard form, as introduced in \cref{sec:main}.
We define matrix $P$ as presented in \eqref{eq:p-matrix}, where $P_i = \id_{N}$ and $p_i = D_i' H_i$ in this particular case.
This choice of matrix $P$ guarantees that $F_i = P^T \Abci P$ are all diagonal for $i\in\{1,\ldots,p+1\}$.

We need to verify that there exists some diagonal matrix $D''$, such that $D'' F_0 D''$ is a completely positive matrix, where $F_0=P^T \Ao P$.
If this matrix $D''$ exists and since we are maximazing in~\eqref{eq:rls-qp3}, $-F_0$ becomes a Z-matrix and the requirements of \cref{theorem:sproperty-result} are fulfilled, ultimately guaranteeing the \sproperty\ and the strong duality result we are interested in.

We know that $\Ao$ is a completely positive matrix. 
Considering that $\Ao$ can be easily decomposed into $N$ blocks and each block is also completely positive with the following form:
\begin{equation}
	(\Ao)_i =
	\begin{pmatrix}
		|\overline{x}| \\ 0 
	\end{pmatrix}
	\begin{pmatrix}
		|\overline{x}^T| & 0
	\end{pmatrix},
\end{equation}
we can compute $(P^T)_i (\Ao)_i (P)_i$ and verify if there exists submatrix $(D'')_i $ such that the after product is completely positive:
\begin{IEEEeqnarray}{rCl}
	(D'')_i (P^T)_i (\Ao)_i (P)_i (D'')_i  & = &
	\begin{pmatrix}
		D_i'' & 0 \\
		0 & 1
	\end{pmatrix}
	\begin{pmatrix}
		I_{p+1} & 0 \\
		H_{i:} & 1
	\end{pmatrix}
	\begin{pmatrix}
		|\overline{x}| \\ 0 
	\end{pmatrix}
	\begin{pmatrix}
		|\overline{x}^T| & 0
	\end{pmatrix}
	\begin{pmatrix}
		I_{p+1} & H_{i:}^T \\
		0 & 1
	\end{pmatrix}
	\begin{pmatrix}
		D_i'' & 0 \\
		0 & 1
	\end{pmatrix} \IEEEnonumber \\
	& = &
	\begin{pmatrix}
		D_i'' & 0 \\
		0 & 1
	\end{pmatrix}
	\begin{pmatrix}
		|\overline{x}| \\
		H_{:i} |\overline{x}|
	\end{pmatrix}
	\begin{pmatrix}
		|\overline{x}^T| & |\overline{x}^T|  H_{:i}^T
	\end{pmatrix}
	\begin{pmatrix}
		D_i'' & 0 \\
		0 & 1
	\end{pmatrix} \IEEEnonumber \\
	& = &
	\begin{pmatrix}
		D_i''|\overline{x}| \\
		H_{:i} |\overline{x}|
	\end{pmatrix}	
	\begin{pmatrix}
		|\overline{x}|^T D_i'' & |\overline{x}|^T H_{:i} 
	\end{pmatrix}
\end{IEEEeqnarray}
where $H_{:i}$ refers to the row vector of matrix $H$ and $(A)_i$ refers to the corresponding submatrix of adecuate dimensions.

If $H_{:i} |\overline{x}|$ is positive or zero, choose $D_i''=\id_{N}$ and the matrix $(D'')_i  (P^T)_i (\Ao)_i (P)_i (D'')_i $ becomes completely positive.
If $H_{:i} |\overline{x}|$ is negative, choose $D_i''=-\id_{N}$, and, likewise, matrix $(D'')_i  (P^T)_i (\Ao)_i (P)_i (D'')_i $ becomes completely positive too. 

The previous analysis can be extended to every other block matrix and $H_{i:}$.
This guarantees that we can always find a $D''$ matrix such that $D'' F_0 D''$ is a completely positive matrix, which concludes the proof.
\end{proof}



\bibliographystyle{spmpsci}      
\bibliography{quadratic}   

\begin{thebibliography}{10}
\providecommand{\url}[1]{{#1}}
\providecommand{\urlprefix}{URL }
\expandafter\ifx\csname urlstyle\endcsname\relax
  \providecommand{\doi}[1]{DOI~\discretionary{}{}{}#1}\else
  \providecommand{\doi}{DOI~\discretionary{}{}{}\begingroup
  \urlstyle{rm}\Url}\fi

\bibitem{Beck2006}
Beck, A., Eldar, Y.C.: Strong duality in nonconvex quadratic optimization with
  two quadratic constraints.
\newblock SIAM Journal on Optimization \textbf{17}(3), 844--860 (2006).
\newblock \doi{10.1137/050644471}

\bibitem{Beck2000}
Beck, A., Teboulle, M.: Global optimality conditions for quadratic optimization
  problems with binary constraints.
\newblock SIAM Journal on Optimization \textbf{11}(1), 179--188 (2000).
\newblock \doi{10.1137/S1052623498336930}

\bibitem{Ben-Tal2009}
Ben-Tal, A., Ghaoui, L., Nemirovski, A.: Robust Optimization.
\newblock Princeton Series in Applied Mathematics. Princeton University Press
  (2009).
\newblock \urlprefix\url{https://press.princeton.edu/titles/9099.html}

\bibitem{Bengtsson2001}
Bengtsson, M., Ottersten, B.: Optimal and suboptimal transmit beamforming.
\newblock In: L.C. Godara (ed.) Handbook of Antennas in Wireless
  Communications, pp. 568--600. CRC Press (2001).
\newblock \doi{10.1201/9781420042146}

\bibitem{Bertsekas2009}
Bertsekas, D.P.: Convex Optimization Theory.
\newblock Athena Scientific (2009).
\newblock \urlprefix\url{http://www.athenasc.com/convexduality.html}

\bibitem{Bomze2015}
Bomze, I.M.: Copositive relaxation beats lagrangian dual bounds in
  quadratically and linearly constrained quadratic optimization problems.
\newblock SIAM Journal on Optimization \textbf{25}(3), 1249--1275 (2015).
\newblock \doi{10.1137/140987997}

\bibitem{Boyd2010}
Boyd, S.: Distributed optimization and statistical learning via the alternating
  direction method of multipliers.
\newblock Foundations and Trends in Machine Learning \textbf{3}(1) (2010).
\newblock \doi{10.1561/2200000016}

\bibitem{Boyd2004}
Boyd, S., Vandenberghe, L.: Convex optimization.
\newblock Cambridge University Press (2004).
\newblock \doi{10.1017/CBO9780511804441}

\bibitem{Boyd2003}
Boyd, S., Xiao, L., Mutapcic, A.: Subgradient methods.
\newblock Tech. rep., Stanford University (2003).
\newblock
  \urlprefix\url{https://web.stanford.edu/class/ee392o/subgrad_method.pdf}

\bibitem{Brickman1961}
Brickman, L.: On the field of values of a matrix.
\newblock Proceedings of the American Mathematical Society \textbf{12}(1),
  61--66 (1961).
\newblock \doi{10.2307/2034125}

\bibitem{Burer2013}
Burer, S., Anstreicher, K.M.: Second-order-cone constraints for extended
  trust-region subproblems.
\newblock Journal on Optimization \textbf{23}(1), 432--451 (2013).
\newblock \doi{10.1137/110826862}

\bibitem{Burer2015}
Burer, S., Yang, B.: The trust region subproblem with non-intersecting linear
  constraints.
\newblock Mathematical Programming \textbf{149}(1), 253--264 (2015).
\newblock \doi{10.1007/s10107-014-0749-1}

\bibitem{Chen2015}
Chen, J., Towfic, Z.J., Sayed, A.H.: Dictionary learning over distributed
  models.
\newblock IEEE Transactions on Signal Processing \textbf{63}(4), 1001--1016
  (2015).
\newblock \doi{10.1109/TSP.2014.2385045}

\bibitem{Dines1941}
Dines, L.L.: On the mapping of quadratic forms.
\newblock Bulletin of the American Mathematical Society \textbf{47}(6),
  494--498 (1941).
\newblock \doi{10.1090/S0002-9904-1941-07494-X}

\bibitem{ElGhaoui1997}
El~Ghaoui, L., Lebret, H.: Robust solutions to least-squares problems with
  uncertain data.
\newblock SIAM Journal on Matrix Analysis and Applications \textbf{18}(4),
  1035--1064 (1997).
\newblock \doi{10.1137/S0895479896298130}

\bibitem{Feron2000}
Feron, E.: Nonconvex quadratic programming, semidefinite relaxations and
  randomization algorithms in information and decision systems.
\newblock In: T.E. Djaferis, I.C. Schick (eds.) System Theory: Modeling,
  Analysis and Control, pp. 255--274. Springer (2000).
\newblock \doi{10.1007/978-1-4615-5223-9\_19}

\bibitem{Fortin2004}
Fortin, C., Wolkowicz, H.: The trust region subproblem and semidefinite
  programming.
\newblock Optimization methods and software \textbf{19}(1), 41--67 (2004).
\newblock \doi{10.1080/10556780410001647186}

\bibitem{Garey1979}
Garey, M.R., Johnson, D.S.: Computers and Intractability: A Guide to the Theory
  of {NP}-completeness.
\newblock W. H. Freeman \& Co. (1979).
\newblock \urlprefix\url{http://dl.acm.org/citation.cfm?id=574848}

\bibitem{Hestenes1968}
Hestenes, M.R.: Pairs of quadratic forms.
\newblock Linear Algebra and its Applications \textbf{1}(3), 397--407 (1968).
\newblock \doi{10.1016/0024-3795(68)90016-5}

\bibitem{Hiriart-Urruty2002}
Hiriart-Urruty, J.B., Torki, M.: Permanently going back and forth between the
  ``quadratic world'' and the ``convexity world'' in optimization.
\newblock Applied Mathematics and Optimization \textbf{45}(2), 169--184 (2002).
\newblock \doi{10.1007/s00245-001-0034-6}

\bibitem{Horn2012}
Horn, R.A., Johnson, C.R.: Matrix Analysis.
\newblock Cambridge University Press (2012).
\newblock \doi{10.1017/CBO9781139020411}

\bibitem{Huang2010}
Huang, Y., Palomar, D.P.: Rank-constrained separable semidefinite programming
  with applications to optimal beamforming.
\newblock IEEE Transactions on Signal Processing \textbf{58}(2), 664--678
  (2010).
\newblock \doi{10.1109/TSP.2009.2031732}

\bibitem{Jacobson1976}
Jacobson, D.H.: A generalization of finsler's theorem for quadratic
  inequalities and equalities.
\newblock Quaestiones Mathematicae \textbf{1}(1), 19--28 (1976).
\newblock \doi{10.1080/16073606.1976.9632513}

\bibitem{Jeyakumar2009}
Jeyakumar, V., Lee, G.M., Li, G.Y.: Alternative theorems for quadratic
  inequality systems and global quadratic optimization.
\newblock SIAM Journal on Optimization \textbf{20}(2), 983--1001 (2009).
\newblock \doi{10.1137/080736090}

\bibitem{Jeyakumar2007}
Jeyakumar, V., Rubinov, A.M., Wu, Z.: Non-convex quadratic minimization
  problems with quadratic constraints: global optimality conditions.
\newblock Mathematical Programming \textbf{110}(3), 521--541 (2007).
\newblock \doi{10.1007/s10107-006-0012-5}

\bibitem{More1993}
Mor{\'e}, J.J.: Generalizations of the trust region problem.
\newblock Optimization methods and Software \textbf{2}(3-4), 189--209 (1993).
\newblock \doi{10.1080/10556789308805542}

\bibitem{Nesterov2000}
Nesterov, Y., Wolkowicz, H., Ye, Y.: Semidefinite programming relaxations of
  nonconvex quadratic optimization.
\newblock In: Handbook of semidefinite programming, pp. 361--419. Springer
  (2000).
\newblock \doi{10.1007/978-1-4615-4381-7_13}

\bibitem{Polik2007}
P{\'o}lik, I., Terlaky, T.: A survey of the {S}-lemma.
\newblock SIAM review \textbf{49}(3), 371--418 (2007).
\newblock \doi{10.1137/S003614450444614X}

\bibitem{Polyak1998}
Polyak, B.T.: Convexity of quadratic transformations and its use in control and
  optimization.
\newblock Journal of Optimization Theory and Applications \textbf{99}(3),
  553--583 (1998).
\newblock \doi{10.1023/A:1021798932766}

\bibitem{Pong2014}
Pong, T.K., Wolkowicz, H.: The generalized trust region subproblem.
\newblock Computational Optimization and Applications \textbf{58}(2), 273--322
  (2014).
\newblock \doi{10.1007/s10589-013-9635-7}

\bibitem{Scutari2014}
Scutari, G., Facchinei, F., Song, P., Palomar, D.P., Pang, J.S.: Decomposition
  by partial linearization: Parallel optimization of multi-agent systems.
\newblock IEEE Transactions on Signal Processing \textbf{62}(3), 641--656
  (2014).
\newblock \doi{10.1109/TSP.2013.2293126}

\bibitem{Stern1994}
Stern, R.J., Wolkowicz, H.: Trust region problems and nonsymmetric eigenvalue
  perturbations.
\newblock Journal on Matrix Analysis and Applications \textbf{15}(3), 755--778
  (1994).
\newblock \doi{10.1137/s0895479891199719}

\bibitem{Vandenberghe2005}
Vandenberghe, L., Balakrishnan, V.R., Wallin, R., Hansson, A., Roh, T.:
  Interior-point algorithms for semidefinite programming problems derived from
  the kyp lemma.
\newblock In: D.~Henrion, A.~Garulli (eds.) Positive Polynomials in Control,
  pp. 195--238. Springer, Berlin, Heidelberg (2005).
\newblock \doi{10.1007/10997703_12}

\bibitem{Yakubovich1971}
Yakubovich, V.: S-procedure in nonlinear control theory.
\newblock Vestnik Leningrad University \textbf{1}, 62--77 (1971)

\bibitem{Zhang2000}
Zhang, S.: Quadratic maximization and semidefinite relaxation.
\newblock Mathematical Programming \textbf{87}(3), 453--465 (2000).
\newblock \doi{10.1007/s101070050006}

\end{thebibliography}

\end{document}